\definecolor{dg}{rgb}{0.01, 0.75, 0.24}
\numberwithin{equation}{section}
\theoremstyle{plain} 
\newtheorem{thm}{Theorem}[section]
\newtheorem{lem}[thm]{Lemma} 
\newtheorem{prop}[thm]{Proposition} 
\newtheorem{rmk}[thm]{Remark}
\def\io{\int_{\Omega}}
\def\ibtt{\int_{B_{t'}}}
\def\ibt {\int_{B_{t}}}
\def\ibR {\int_{B_{R}}}
\def\ibr2 {\int_{B_{\frac{R}{2}}}}
\def\ibro2 {\int_{B_{\frac{\rho}{2}}}}
\def\ibs{\int_{B_{s}}}
\def\iB{\int_B}
\def\fhi{\varphi}
\def\dd{\textrm{d}}
\def\fibR {\fint_{B_{R}}}
\def\fibrho {\fint_{B_\rho}}
\def\fibr2 {\displaystyle\fint_{B_{\frac{R}{2}}}}
\def\fibro2 {\fint_{B_{\frac{\rho}{2}}}}
\begin{document}

\title{Higher differentiability results\\ in the scale of Besov spaces \\
to a class of double-phase obstacle problems}
\author {   
\sc{Antonio Giuseppe Grimaldi}\thanks{Dipartimento di Matematica e Applicazioni "R. Caccioppoli", Università degli Studi di Napoli "Federico II", Via Cintia, 80126 Napoli
 (Italy). E-mail: \textit{antoniogiuseppe.grimaldi@unina.it}} 
  \;\textrm{and}   Erica Ipocoana \thanks{University of Modena and Reggio Emilia, Dipartimento di Scienze Fisiche, Informatiche e Matematiche, via Campi 213/b, I-41125 Modena (Italy).
E-mail: \textit{erica.ipocoana@unipr.it}}}

\maketitle
\maketitle
\begin{abstract}
We study the higher fractional differentiability properties of the gradient of the solutions to variational obstacle problems of the form
\begin{gather*}
\min \biggl\{ \int_{\Omega} F(x,w,Dw) d x \ : \ w \in \mathcal{K}_{\psi}(\Omega)  \biggr\},
\end{gather*}
with $F$ double phase functional of the form
\begin{equation*}
F(x,w,z)=b(x,w)(|z|^p+a(x)|z|^q),
\end{equation*}
where $\Omega$ is a bounded open subset of $\mathbb{R}^n$, $\psi \in W^{1,p}(\Omega)$ is a fixed function called \textit{obstacle} and $\mathcal{K}_{\psi}(\Omega)= \{ w \in W^{1,p}(\Omega) : w \geq \psi \ \text{a.e. in} \ \Omega  \}$ is the class of admissible functions. Assuming that the gradient of the obstacle belongs to a suitable Besov space, we are able to prove that the gradient of the solution preserves some fractional differentiability property.
\end{abstract}

\medskip
\noindent \textbf{Keywords:} Besov spaces, higher differentiability, obstacle problem, double phase, non-standard growth.  \medskip \\
\medskip
\noindent \textbf{MSC 2020:} 26A27, 49J40, 47J20.

\section{Introduction}

In this paper we study the higher fractional differentiability properties of the gradient of the solutions $u \in W^{1,p}(\Omega)$ to variational obstacle problems of the form
\begin{gather}\label{obpro}
\min \biggl\{ \displaystyle\int_{\Omega} F(x,w,Dw) \dd x \ : \ w \in \mathcal{K}_{\psi}(\Omega)  \biggr\},
\end{gather}
where the energy density $F: \Omega \times \mathbb{R} \times \mathbb{R}^n \rightarrow \mathbb{R}$ is defined by 
\begin{equation}\label{integrand}
F(x,w,z)=b(x,w)H(x,z),  
\end{equation}
being
\begin{equation}\label{integrandh}
H(x,z) = |z|^p+ a(x)|z|^q,
\end{equation}
where $2 \leq p < q$.\\
Here $\Omega$ is a bounded open set of $\mathbb{R}^n$, $n \geq 2$, the function $\psi: \Omega \rightarrow [-\infty, + \infty)$, called \textit{obstacle}, belongs to the Sobolev class $W^{1,p}(\Omega)$ and the class $\mathcal{K}_{\psi}(\Omega) $ is defined as follows
\begin{gather}
\mathcal{K}_{\psi}(\Omega)= \{ w \in W^{1,p}(\Omega) : w \geq \psi \ \text{a.e. in} \ \Omega  \}. \notag
\end{gather}
Note that the set $\mathcal{K}_{\psi}(\Omega) $ is not empty since $\psi \in \mathcal{K}_{\psi}(\Omega) $.
\\We assume that the coefficients $a(x)$ and $b(x,w)$ satisfy the following assumptions:

\textbf{Assumption 1}

\begin{itemize}
\item[(i)] $a : \Omega \rightarrow [0, +\infty)$ is a bounded and measurable function such that $$|a(x)-a(y)| \leq \omega_a(|x-y|),$$ for all $x,y \in \Omega$, where $\omega_a : \mathbb{R}^+ \rightarrow [0,1]$ is defined by $\omega_a(\rho)= \min \{ \rho^\alpha, 1 \}$, for some $\alpha \in (0,1)$;
\item[(ii)] the function $b: \Omega \times \mathbb{R} \rightarrow (0,+\infty)$ is a bounded Carathéodory function, i.e. there exist $0< \nu \leq L$ such that
$$0 < \nu \leq b(x,w) \leq L < \infty.$$
\end{itemize}

\textbf{Assumption 2}

\begin{itemize}
\item[(i)] there exists a function $\omega_b : \mathbb{R}^+ \rightarrow [0,1]$  defined by $\omega_b(\rho)= \min \{ \rho^\beta, 1 \}$, for some $\beta \in (0,1)$, such that
$$|b(x,u)-b(y,v)| \leq \omega_b(|x-y|+|u-v|),$$ for all $x,y \in \Omega$ and every $u,v \in \mathbb{R}$.
\end{itemize}
The energy density given by \eqref{integrand} is a model case of functions $F$ satisfying the following set of conditions
$$ \nu_1 |z|^p \leq F(x,w,z) \leq L_1 (1+|z|^q) \eqno{\rm{{ (F1)}}}$$
$$ \nu_2 |z|^{p-2} |\lambda|^2 \leq \langle D_{zz}F(x,w,z) \lambda, \lambda \rangle \leq L_2 (1+|z|^{q-2}) |\lambda|^2  \eqno{\rm{{ (F2)}}}$$
$$ |F(x_1,w_1,z)-F(x_2,w_2,z)| \leq l_1 \omega_\delta(|x_1-x_2|+|w_1-w_2|)(1+|z|^q)  \eqno{\rm{{ (F3)}}}$$
for all $x,x_1,x_2 \in \Omega$, $w,w_1,w_2 \in \mathbb{R}$ and every $z,\lambda \in \mathbb{R}^n$, where $0 <\nu_1 \leq L_1$, $0 <\nu_2 \leq L_2$, $l_1 \geq 1$ are fixed constants and $\omega_\delta : \mathbb{R}^+ \rightarrow [0,1]$ is a function defined by $\omega_\delta(\rho)= \min \{ \rho^\delta, 1 \}$, for some $\delta \in (0,1)$ depending on $\alpha$ and $\beta$ introduced in Assumption 1 and 2 respectively.
We point out that the choice of stating Assumption 1 and 2 separately is due to the fact that they are needed independently.

The obstacle problem appeared in the mathematical literature in the work of Stampacchia \cite{stampacchia} in the special case $\psi = \chi_E$ and related to the capacity of a subset $E \Subset \Omega$; in an earlier independent work, Fichera \cite{fichera} solved the first unilateral problem, the so-called \textit{Signorini problem} in elastostatics.

It is usually observed that the regularity of solutions to the obstacle problems is influenced by the one of the obstacle; for example, for linear obstacle problems, obstacle and solutions have the same regularity \cite{brezis.kinderlehrer,caffarelli.kinderlehrer,kinderlehrer.stampacchia}. This does not apply in the nonlinear setting, hence along the years, there have been intense research activities for the regularity of the obstacle problem in this direction.
In the case of standard growth conditions, Eleuteri and Passarelli di Napoli \cite{eleuteri.passarelli2018} proved that an extra differentiability of integer or fractional order of the gradient of the obstacle transfers to the gradient of the solutions, provided the partial map $x \mapsto D_\xi\tilde{F}(x,\xi)$ possesses a suitable differentiability property, where $\tilde{F}$ is a general integrand independent of the $w-$variable.  
\\Recently, it was proved in \cite{gavioli1,gavioli2} that the weak differentiability of integer order of the partial map $x \mapsto D_\xi\tilde{F}(x, \xi)$ is a sufficient condition to prove that an extra differentiability of integer order of the gradient of the obstacle transfers to the gradient of the solutions to obstacle problems with $p,q$-growth conditions. 
This property was generalized also for fractional differentiability, connected to Besov spaces in   
 \cite{grimaldi.ipocoana}.\\
It is worth noticing that double phase functionals
are a useful tool to study the behaviour of
strongly anisotropic materials whose hardening properties are strongly dependent on the point and connected to the exponent
ruling the growth of the gradient variable. The
coefficient $a(\cdot)$ regulates the mixture between two different materials, with
$p$ and $q$ hardening, respectively (see, for instance, \cite{zhikov1,zhikov2}).
The regularity properties of local minimizers to such functionals
recently have been investigated for unconstrained problems. In particular, we quote the work \cite{colombo.mingione} by Colombo and Mingione where the functional $H(x,Du)$  has been considered (see \eqref{integrandh}), \cite{baroni.colombo.mingione} by Baroni, Colombo and Mingione who studied the integrand defined in \eqref{integrand} and \cite{coscia} by Coscia, who dealt with the functional defined by
$$ \mathcal{F} (w,\Omega):= \displaystyle\int_{\Omega} b(x,w)[|Dw|^p+a(x)|Dw|^p \text{log}(e+|Dw|)]\dd x.$$
Furthermore, a higher fractional differentiability has been proved for solutions to double phase elliptic obstacle problems in \cite{zhang.zheng}.  We also recall that when referring to $p,q$-growth conditions, in order to ensure the regularity of minima, a smallness condition on the gap $q/p>1$ is necessary (see, for instance, the counterexamples in \cite{fonseca.maly.mingione,giaquinta,marcellini1991}).\\

The main difficulty of this work is the dependence of our double phase functional both on the $x-$variable and the $w-$variable, where the map $w\mapsto b(x,w)H(x,z)$ is non-differentiable. In order to deal with this issue, we follow the strategy  proposed  in \cite{kristensen.mingione} and later used in \cite{eleuteri.passarelli2020}.  Namely, we introduce the so-called "freezed" functional defined in \eqref{froint} and the solution to the corresponding obstacle problem (see \eqref{frozen}) for which we prove a higher differentiability result in the scale of Besov spaces following the argument in \cite{grimaldi.ipocoana}. The idea is to compare the solution $u$ to the original obstacle problem \eqref{obpro} and the solution $v$ to the "freezed" one \eqref{frozen}. More precisely, we estimate the fractional difference quotients of $u$ and $v$, in an integral sense, gaining a Besov regularity for $u$. In order to do so, we also have to derive some ad hoc higher integrability results, both at the interior and up to boundary, that is for the solution $u$ of the original obstacle problem \eqref{obpro} and the solution $v$ to the freezed one \eqref{frozen} respectively. The first one is obtained adapting the argument in \cite{eleuteri2004}, while the second one generalizes the result by Cupini, Fusco and Petti in \cite{cupini.fusco.petti}. 
Eventually, we use a boot-strapping argument to get the maximal higher fractional differentiability.\\
The main result of the paper is the following.

\begin{thm}\label{mainthm}
Let $u \in W^{1,p}(\Omega)$ be the solution to the obstacle problem \eqref{obpro}, with $F$ defined by \eqref{integrand}, under Assumptions 1 and 2, for exponents $2 \leq p < \frac{n}{\alpha}$, $p<q$ verifying
$$\dfrac{q}{p}< 1 + \dfrac{\alpha}{n}.$$ If $D \psi \in B^{\gamma}_{2q-p,\infty,\text{loc}}(\Omega)$, for $0< \alpha < \gamma < 1$, then there exists $\tilde{\sigma}:=\tilde{\sigma}(p,q,n,\alpha,\beta) \in (0,1)$ s.t.
$$  V_p(Du), \ \sqrt{a(x)}V_q(Du) \in B^{t}_{2,\infty, \text{loc}}(\Omega), \quad \forall t \in ( 0, \tilde{\sigma} ).$$
\end{thm}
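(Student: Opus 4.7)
The plan is to implement the freeze-and-compare scheme anticipated in the introduction. Fix a ball $B_{2R}\Subset\Omega$ centered at $x_0$, and introduce the frozen integrand $\tilde F(x,z):=b(x_0,(u)_{B_R})H(x,z)$, together with the unique minimizer $v$ of $w\mapsto\int_{B_R}\tilde F(x,Dw)\,\dd x$ in the class of functions agreeing with $u$ on $\partial B_R$ and lying above $\psi$ in $B_R$. Since $\tilde F$ no longer depends on $w$, the machinery of \cite{grimaldi.ipocoana} is directly applicable and yields, for some exponent $\gamma'\in(0,\gamma)$ determined by $\gamma$, by the gap $q/p<1+\alpha/n$, and by the integrability $2q-p$ in the Besov hypothesis on $D\psi$, an estimate of the form
\[
\|V_p(Dv)\|_{B^{\gamma'}_{2,\infty}(B_{R/2})} + \|\sqrt{a(\cdot)}\,V_q(Dv)\|_{B^{\gamma'}_{2,\infty}(B_{R/2})} \leq C\,\Phi(u,\psi,R).
\]

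Two auxiliary higher integrability results have to be established first. Adapting \cite{eleuteri2004}, an interior higher integrability for $u$ gives $H(x,Du)\in L^{1+\varepsilon_0}_{\mathrm{loc}}(\Omega)$ for some $\varepsilon_0>0$; this is the margin needed to absorb the error produced by the freezing of $b(x,w)$ in the $w$-variable. Extending the result of Cupini--Fusco--Petti \cite{cupini.fusco.petti} up to $\partial B_R$, one obtains a boundary higher integrability for $v$ in the form $\int_{B_R}H(x,Dv)^{1+\varepsilon_1}\,\dd x\leq C\int_{B_R}(1+H(x,Du))^{1+\varepsilon_1}\,\dd x$, which is indispensable since the comparison with $u$ is carried out across the whole ball $B_R$.

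The comparison step combines the minimalities of $u$ and $v$. Testing each Euler--Lagrange inequality with the admissible competitors $\min\{u,v\}$ and $\max\{u,v\}$, the uniform ellipticity (F2) together with the H\"older bound (F3) and Assumption~2 yields
\[
\int_{B_R}\!\bigl(|V_p(Du)-V_p(Dv)|^2 + a(x)|V_q(Du)-V_q(Dv)|^2\bigr)\dd x \leq C\!\int_{B_R}\!\omega_b\bigl(R+|u-(u)_{B_R}|\bigr)(1+H(x,Dv))\,\dd x.
\]
The right-hand side is then controlled by H\"older's and Poincar\'e's inequalities, exploiting both higher integrabilities, to produce a bound of the form $C\,R^{\tau}$ for some $\tau>0$. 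Summing this with the Besov estimate for $V_p(Dv)$ and $\sqrt{a}\,V_q(Dv)$ transfers a (slightly weaker) Besov regularity to $V_p(Du)$ and $\sqrt{a(x)}\,V_q(Du)$ on $B_{R/2}$.

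A bootstrap argument then produces the final exponent. Starting from a modest initial Besov exponent $\sigma_0>0$ supplied by the first application of the previous step, each iteration uses the regularity just gained on $u$, via a suitable Besov--Sobolev embedding, to sharpen the control of the oscillation $|u-(u)_{B_R}|$ entering the comparison, generating an increasing sequence $\sigma_k\nearrow\tilde\sigma$ whose limit $\tilde\sigma\in(0,1)$ depends only on $p,q,n,\alpha,\beta$. The main obstacle is precisely the comparison step: because $b(x,w)H(x,z)$ is merely H\"older, not differentiable, in $w$, one cannot avoid paying for the oscillation of $u$ in the error, and balancing this cost against the gain granted by $D\psi\in B^\gamma_{2q-p,\infty,\mathrm{loc}}$ is exactly what forces both the gap condition $q/p<1+\alpha/n$ and the specific integrability exponent $2q-p$ in the hypothesis.
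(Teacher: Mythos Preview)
Your overall architecture---freeze, higher integrability for $u$ and $v$, compare, bootstrap---matches the paper's. But the proposal as written has a genuine gap at the transfer step, and the bootstrap you describe is not the one that actually drives the iteration.

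\textbf{The missing coupling $R=|h|^{\delta}$.} You write that the comparison bound $\int_{B_R}|V_p(Du)-V_p(Dv)|^2+a(x)|V_q(Du)-V_q(Dv)|^2\,\dd x\le C R^{\tau}$, ``summed with the Besov estimate for $V_p(Dv)$'', transfers Besov regularity to $u$. This is not automatic: an $L^2$-closeness of $V_p(Du)$ and $V_p(Dv)$ on a fixed ball says nothing about $\int|\tau_h V_p(Du)|^2$. The device that makes the transfer work (due to Kristensen--Mingione and used explicitly in the paper) is to let the freezing ball have radius $R=|h|^{\delta}$, i.e.\ one chooses a \emph{different} comparison map $v=v_h$ for each increment $h$. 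Then
\[
\int_{\mathcal B}|\tau_h V_p(Du)|^2\,\dd x\le C\int_{\hat{\mathcal B}}|V_p(Du)-V_p(Dv)|^2\,\dd x + C\int_{\mathcal B}|\tau_h V_p(Dv)|^2\,\dd x,
\]
the first term is $\le C|h|^{\sigma\delta}(\ldots)$ by the comparison lemma, and the second is $\le C|h|^{\alpha-2\delta\tilde p_1}(\ldots)$ by the freezed a~priori estimate, because the Besov bound for $v$ carries an explicit negative power of the radius. One then optimizes in $\delta$ and covers $\Omega'$ by balls of radius $|h|^{\delta}$. Without this $h$-dependent choice of $R$, the two pieces do not combine into a single power of $|h|$. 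Your sketch never mentions this, and it is the heart of the argument.

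\textbf{The bootstrap runs through a different channel.} You propose that each iteration sharpens the control of the oscillation $|u-(u)_{B_R}|$ in the comparison, thereby improving the power $R^{\tau}$. In the paper the comparison exponent $\sigma=\min\{\beta,m-1\}$ stays \emph{fixed} throughout; it is not improved by knowing more about $u$. What improves is the \emph{freezed} term: from $V_p(Du)\in B^{\iota_{j-1}}_{2,\infty}$ one gets, via the Besov--Sobolev embedding, $H(x,Du)\in L^{n/(n-2\lambda)}$ for $\lambda<\iota_{j-1}$, and H\"older's inequality on the cube $\hat{\mathcal Q}_2$ of side $\sim|h|^{\delta}$ converts $\int_{\hat{\mathcal Q}_2}(1+H(x,Du))\,\dd x$ into a quantity carrying an extra factor $|h|^{2\delta\kappa_1\lambda}$. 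Re-optimizing $\delta$ then yields the iteration map $A(t)=\dfrac{\alpha\sigma}{2[2(\tilde p_1-\kappa_1 t)+\sigma]}$, whose fixed point is $\tilde\sigma$. Your mechanism (push the oscillation of $u$) might also produce a gain, but it is not the one executed in the paper and you have not shown that it leads to an increasing sequence of exponents with a computable limit.

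A minor point: the Besov exponent obtained for the freezed solution is $\alpha$ (respectively $\alpha/2$ for $\sqrt{a}\,V_q(Dv)$), dictated by the H\"older exponent of $a(\cdot)$, not ``some $\gamma'\in(0,\gamma)$'' determined by $\gamma$.
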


The paper is organized as follows. After recalling some notation and preliminary results in Section \ref{notprel}, we focus on deriving the intermediate steps that will put us in the position to prove our main result, Theorem \ref{mainthm}. In particular, in Section \ref{higher_int}, we show that the solution to the freezed obstacle problem \eqref{frozen} satisfies a variational inequality and moreover we present interior and up to the boundary higher integrability properties, which will be crucial for the comparison argument, as already mentioned. In Section \ref{hdcm}, we prove the higher fractional differentiability of the solution to the freezed obstacle problem \eqref{frozen}. We remark that the procedure used in order to do so requires the assumption $p\geq 2$. The comparison argument is presented in Section \ref{seccomp}. Finally, in Section \ref{mainsec}, we  show that a suitable fractional differentiability property on the gradient of the obstacle transfers to a higher fractional differentiability for the gradient of the minimizer, so that we are eventually able to prove Theorem \ref{mainthm}. 

We point out that, in order to prove the higher integrability of the solution to the original obstacle problem (see Theorem \ref{higherint}) and the higher differentiability of the solution to the freezed obstacle problem in Section \ref{hdcm}, Assumption 1 $(ii)$ is the only one needed on the function $b(x,w)$. On the other hand, in order to prove the comparison lemma (see Lemma \ref{comparison}), we require Assumption 2 on the coefficient $b(x,w)$.

\section{Notations and preliminary results}\label{notprel}

In what follows, $B(x,r)=B_{r}(x)= \{ y \in \mathbb{R}^{n} : |y-x | < r  \}$ will denote the ball centered at $x$ of radius $r$. We shall omit the dependence on the center and on the radius when no confusion arises. For a function $u \in L^{1}(B)$, the symbol
\begin{center}
$u_B :=\displaystyle\fint_{B} u(x) \dd x = \dfrac{1}{|B|} \displaystyle\int_{B} u(x) \dd x$.
\end{center}
will denote the integral mean of the function $u$ over the set $B$.

It is convenient to introduce an auxiliary function
\begin{center}
$V_{d}(\xi)=|\xi|^\frac{d-2}{2} \xi$
\end{center}
defined for all $\xi\in \mathbb{R}^{n}$. One can easily check that
\begin{gather}
|\xi|^d =|V_d(\xi)|^2. \label{Vp}
\end{gather}
For the auxiliary function $V_{d}$, we recall the following estimate (see the proof of \cite[Lemma 8.3]{giusti}): 
\begin{lem}\label{D1}
Let $1<d<+\infty$. There exists a constant $c=c(n,d)>0$ such that
\begin{center}
$c^{-1}(|\xi|^{2}+|\eta|^{2})^{\frac{d-2}{2}} \leq \dfrac{|V_{d}(\xi)-V_{d}(\eta)|^{2}}{|\xi-\eta|^{2}} \leq c(|\xi|^{2}+|\eta|^{2})^{\frac{d-2}{2}} $
\end{center}
for any $\xi, \eta \in \mathbb{R}^{n}, \ \xi\neq \eta$.
\end{lem}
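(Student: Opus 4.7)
The plan is to reduce the algebraic inequality to an estimate for the Jacobian of $V_d$ along the segment $[\eta,\xi]$. Writing $\gamma(t)=\eta+t(\xi-\eta)$, the fundamental theorem of calculus gives
\begin{equation*}
V_d(\xi)-V_d(\eta)=\int_0^1 DV_d(\gamma(t))\,(\xi-\eta)\,\dd t,
\end{equation*}
so both the upper and the lower bounds in the claim will follow once one controls $DV_d(z)$ pointwise in terms of $|z|^{d-2}$ and controls the scalar integral $\int_0^1 |\gamma(t)|^{d-2}\,\dd t$ in terms of $(|\xi|^2+|\eta|^2)^{(d-2)/2}$.

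Next I would compute the Jacobian explicitly. From $V_d(z)_i=|z|^{(d-2)/2}z_i$ a direct calculation yields
\begin{equation*}
DV_d(z)w=|z|^{\frac{d-2}{2}}\!\left[w+\tfrac{d-2}{2}(\hat z\cdot w)\hat z\right],\qquad \hat z=z/|z|,
\end{equation*}
and decomposing $w$ into its components parallel and orthogonal to $\hat z$ one obtains the two-sided bound
\begin{equation*}
\min\!\bigl\{1,\tfrac{d^2}{4}\bigr\}|z|^{d-2}|w|^2\le \langle DV_d(z)w,w\rangle\le |DV_d(z)w|^2\le \max\!\bigl\{1,\tfrac{d^2}{4}\bigr\}|z|^{d-2}|w|^2,
\end{equation*}
with constants depending only on $d$.

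With this in hand, the upper bound in the lemma is a direct application of Jensen's inequality to the integral representation, yielding
\begin{equation*}
|V_d(\xi)-V_d(\eta)|^2\le c(d)\,|\xi-\eta|^2\!\int_0^1\!|\gamma(t)|^{d-2}\,\dd t.
\end{equation*}
For the lower bound I would instead pair with $\xi-\eta$: using the monotonicity estimate for $DV_d$ and Cauchy--Schwarz,
\begin{equation*}
|V_d(\xi)-V_d(\eta)|\,|\xi-\eta|\ge (V_d(\xi)-V_d(\eta))\cdot(\xi-\eta)\ge c(d)\,|\xi-\eta|^2\!\int_0^1\!|\gamma(t)|^{d-2}\,\dd t,
\end{equation*}
so both inequalities reduce to the scalar claim
\begin{equation*}
c_1(d)\,(|\xi|^2+|\eta|^2)^{\frac{d-2}{2}}\le \int_0^1|\eta+t(\xi-\eta)|^{d-2}\,\dd t\le c_2(d)\,(|\xi|^2+|\eta|^2)^{\frac{d-2}{2}}.
\end{equation*}

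The main obstacle is exactly this last scalar estimate in the singular range $1<d<2$, where the integrand blows up if the segment $[\eta,\xi]$ passes near the origin; the upper bound in the regular range $d\ge 2$ is elementary since $|\gamma(t)|\le\max(|\xi|,|\eta|)$. For $1<d<2$ I would argue by homogeneity, normalizing $|\xi|^2+|\eta|^2=1$ and splitting into the cases $|\xi-\eta|\le\tfrac12\max(|\xi|,|\eta|)$ (where $|\gamma(t)|$ is comparable to $\max(|\xi|,|\eta|)$ on the whole segment) and $|\xi-\eta|>\tfrac12\max(|\xi|,|\eta|)$, where one performs the change of variable $s=|\gamma(t)|$ and exploits the integrability of $s^{d-2}$ near $0$, which holds precisely because $d>1$. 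This is the classical computation (see e.g.\ the proof of \cite[Lemma 8.3]{giusti} already quoted), and combining it with the Jacobian bounds above closes the proof.
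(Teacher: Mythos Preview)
The paper does not actually prove this lemma; it simply refers the reader to \cite[Lemma 8.3]{giusti}, which you yourself invoke at the end. Your strategy (write $V_d(\xi)-V_d(\eta)$ as an integral of $DV_d$ along the segment, control $DV_d(z)$ pointwise by $|z|^{(d-2)/2}$, and reduce to the scalar estimate $\int_0^1|\gamma(t)|^{s}\,\dd t\approx(|\xi|+|\eta|)^{s}$) is exactly the standard argument found there, so in substance your proposal coincides with what the paper cites.

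There is one genuine slip to fix. In your displayed chain
\[
\min\!\bigl\{1,\tfrac{d^2}{4}\bigr\}|z|^{d-2}|w|^2\le \langle DV_d(z)w,w\rangle\le |DV_d(z)w|^2\le \max\!\bigl\{1,\tfrac{d^2}{4}\bigr\}|z|^{d-2}|w|^2,
\]
the bilinear form $\langle DV_d(z)w,w\rangle$ scales like $|z|^{(d-2)/2}|w|^2$, not $|z|^{d-2}|w|^2$, and the middle inequality $\langle DV_d(z)w,w\rangle\le|DV_d(z)w|^2$ is dimensionally inconsistent (and false in general). Consequently your pairing step actually gives
\[
(V_d(\xi)-V_d(\eta))\cdot(\xi-\eta)\;\ge\; c(d)\,|\xi-\eta|^2\!\int_0^1|\gamma(t)|^{\frac{d-2}{2}}\,\dd t,
\]
so the scalar comparison needed for the lower bound has exponent $(d-2)/2$, while your Jensen argument for the upper bound correctly produces exponent $d-2$. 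This is harmless --- the scalar lemma in \cite[Lemma 8.3]{giusti} holds for any exponent $s>-1$, and both $(d-2)/2$ and $d-2$ lie in that range when $d>1$ --- but the write-up should be corrected accordingly.
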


Now we state a well-known iteration lemma (see \cite[Lemma 6.1]{giusti}  for the proof).
\begin{lem}\label{lm2}
Let $\Phi  :  [\frac{R}{2},R] \rightarrow \mathbb{R}$ be a bounded nonnegative function, where $R>0$. Assume that for all $\frac{R}{2} \leq r < s \leq R$ it holds
$$\Phi (r) \leq \theta \Phi(s) +A + \dfrac{B}{(s-r)^2}+ \dfrac{C}{(s-r)^{\gamma}}$$
where $\theta \in (0,1)$, $A$, $B$, $C \geq 0$ and $\gamma >0$ are constants. Then there exists a constant $c=c(\theta, \gamma)$ such that
$$\Phi \biggl(\dfrac{R}{2} \biggr) \leq c \biggl( A+ \dfrac{B}{R^2}+ \dfrac{C}{R^{\gamma}}  \biggr).$$
\end{lem}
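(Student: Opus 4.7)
The plan is to iterate the given inequality along a geometrically spaced sequence of radii converging to $R$, absorbing the $\theta\Phi$ term by telescoping. Fix a parameter $\tau\in(0,1)$ with $\tau>\max\{\theta^{1/2},\theta^{1/\gamma}\}$ (this is possible precisely because $\theta<1$), and define $r_{0}=R/2$ together with $r_{i+1}=r_{i}+(1-\tau)\tau^{i}\,\frac{R}{2}$. A direct summation of a geometric series gives $r_{k}=R-\tau^{k}R/2$, so $r_{k}\in[R/2,R)$ for every $k$ and the increments satisfy $r_{i+1}-r_{i}=(1-\tau)\tau^{i}R/2$. In particular, every consecutive pair $(r_{i},r_{i+1})$ lies in the range where the hypothesis applies.

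Applying the hypothesis with $(r,s)=(r_{i},r_{i+1})$ and iterating $k$ times, the $\theta\Phi(r_{i+1})$ terms chain up to give
$$\Phi(R/2)\;\leq\;\theta^{k}\Phi(r_{k})+\sum_{i=0}^{k-1}\theta^{i}\left[A+\frac{B}{(r_{i+1}-r_{i})^{2}}+\frac{C}{(r_{i+1}-r_{i})^{\gamma}}\right].$$
Substituting the explicit form of $r_{i+1}-r_{i}$, the $A$-sum is geometric with ratio $\theta$, the $B$-sum is geometric with ratio $\theta\tau^{-2}$, and the $C$-sum is geometric with ratio $\theta\tau^{-\gamma}$; the choice of $\tau$ made above forces each of these ratios to lie strictly in $(0,1)$.

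Letting $k\to\infty$, the term $\theta^{k}\Phi(r_{k})$ vanishes because $\Phi$ is bounded and $\theta<1$, and the three geometric series converge. Collecting the factors of $R^{-2}$ and $R^{-\gamma}$ produced by $(r_{i+1}-r_{i})^{-2}$ and $(r_{i+1}-r_{i})^{-\gamma}$ yields the estimate
$$\Phi(R/2)\;\leq\;\frac{A}{1-\theta}+\frac{4\,B/R^{2}}{(1-\tau)^{2}\bigl(1-\theta\tau^{-2}\bigr)}+\frac{2^{\gamma}\,C/R^{\gamma}}{(1-\tau)^{\gamma}\bigl(1-\theta\tau^{-\gamma}\bigr)},$$
which, after fixing $\tau$ as a function of $\theta$ and $\gamma$ only, is the desired conclusion with $c=c(\theta,\gamma)$. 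The only truly delicate point is the choice of $\tau$: it must be close enough to $1$ to keep both $\theta\tau^{-2}$ and $\theta\tau^{-\gamma}$ below $1$ while simultaneously keeping the prefactors $(1-\tau)^{-2}$ and $(1-\tau)^{-\gamma}$ finite, and this balance is exactly where the dependence of the final constant on $\theta$ and $\gamma$ is concentrated.
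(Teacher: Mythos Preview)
Your proof is correct and is precisely the classical iteration argument: the paper does not supply its own proof of this lemma but simply refers to \cite[Lemma 6.1]{giusti}, where exactly this geometric-sequence-of-radii argument is carried out. Your choice of $\tau$ and the telescoping are the standard ones, so there is nothing to compare.
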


\subsection{Besov-Lipschitz spaces}

Let $v:\mathbb{R}^{n} \rightarrow \mathbb{R}$ be a function. As in \cite[Section 2.5.12]{haroske}, given $0< \alpha <1$ and $1 \leq p,q < \infty$, we say that $v$ belongs to the Besov space $B^{\alpha}_{p,q}(\mathbb{R}^{n})$ if $v \in L^{p}(\mathbb{R}^{n})$ and
\begin{center}
$\Vert v \Vert_{B^{\alpha}_{p,q}(\mathbb{R}^{n})} = \Vert v \Vert_{L^{p}(\mathbb{R}^{n})} + [v]_{B^{\alpha}_{p,q}(\mathbb{R}^{n})} < \infty$,
\end{center}
where
\begin{center}
$[v]_{B^{\alpha}_{p,q}(\mathbb{R}^{n})} =  \biggl( \displaystyle\int_{\mathbb{R}^{n}} \biggl( \displaystyle\int_{\mathbb{R}^{n}} \dfrac{|v(x+h)-v(x)|^{p}}{|h|^{\alpha p}} \dd x \biggr)^{\frac{q}{p}}  \dfrac{\dd h}{|h|^{n}} \biggr)^{\frac{1}{q}}  < \infty$.
\end{center}
Equivalently, we could simply say that $v \in L^{p}(\mathbb{R}^{n})$ and $\frac{\tau_{h}{v}}{|h|^{\alpha}} \in L^{q}\bigl( \frac{\dd h}{|h|^{n}}; L^{p}(\mathbb{R}^{n}) \bigr)$. As usual, if one simply integrates for $h \in B(0, \delta)$ for a fixed $\delta >0$ then an equivalent norm is obtained, because
\begin{center}
$\biggl( \displaystyle\int_{\{|h| \geq \delta\}} \biggl( \displaystyle\int_{\mathbb{R}^{n}} \dfrac{|v(x+h)-v(x)|^{p}}{|h|^{\alpha p}} \dd x \biggr)^{\frac{q}{p}}  \dfrac{\dd h}{|h|^{n}} \biggr)^{\frac{1}{q}} \leq c(n, \alpha,p,q, \delta) \Vert v \Vert_{L^{p}(\mathbb{R}^{n})} $.
\end{center}
Similarly, we say that $v \in B^{\alpha}_{p,\infty}(\mathbb{R}^{n})$ if $v \in L^{p}(\mathbb{R}^{n})$ and
\begin{center}
$[v]_{B^{\alpha}_{p, \infty}(\mathbb{R}^{n})} =  \displaystyle\sup_{h \in \mathbb{R}^{n}} \biggl( \displaystyle\int_{\mathbb{R}^{n}} \dfrac{|v(x+h)-v(x)|^{p}}{|h|^{\alpha p}} \dd x \biggr)^{\frac{1}{p}} < \infty $.
\end{center}
Again, one can simply take supremum over $|h| \leq \delta$ and obtain an equivalent norm. By construction, $B^{\alpha}_{p, q}(\mathbb{R}^{n}) \subset L^{p}(\mathbb{R}^{n})$. One also has the following version of Sobolev embeddings (a proof can be found at \cite[Proposition 7.12]{haroske}).
\begin{lem}\label{3.1}
Suppose that $0 < \alpha <1$.
\\ (a) If $1 < p < \frac{n}{\alpha}$ and $1 \leq q \leq p^{*}_{\alpha} = \frac{np}{n- \alpha p}$, then there is a continuous embedding $B^{\alpha}_{p, q}(\mathbb{R}^{n}) \subset L^{p}(\mathbb{R}^{n})$.
\\ (b) If $p = \frac{n}{\alpha}$ and $1 \leq q \leq \infty$, then there is a continuous embedding $B^{\alpha}_{p, q}(\mathbb{R}^{n}) \subset BMO(\mathbb{R}^{n})$,
\\ where $BMO$ denotes the space of functions with bounded mean oscillations \emph{\cite[Chapter 2]{giusti}}.
\end{lem}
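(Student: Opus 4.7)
My plan is to prove both parts via the Littlewood--Paley decomposition, which is the route followed in Haroske's monograph cited in the statement. I would fix a Littlewood--Paley partition of unity $\{\psi_j\}_{j\geq 0}$ on $\mathbb{R}^n$ adapted to dyadic annuli and define the corresponding dyadic blocks $\Delta_j v = \mathcal{F}^{-1}(\psi_j \widehat v)$. The starting point is the well-known equivalence
\[
\|v\|_{B^\alpha_{p,q}(\mathbb{R}^n)} \sim \|\Delta_0 v\|_{L^p(\mathbb{R}^n)} + \biggl(\sum_{j \geq 1} 2^{j\alpha q}\|\Delta_j v\|_{L^p(\mathbb{R}^n)}^q\biggr)^{1/q}
\]
(with the usual modification if $q=\infty$), which links the integral definition given in the excerpt to dyadic analysis and lets one estimate $v$ block by block.

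For part (a), I would invoke Bernstein's inequality $\|\Delta_j v\|_{L^r} \lesssim 2^{jn(1/p-1/r)}\|\Delta_j v\|_{L^p}$ with $r = p^*_\alpha$. Since $n(1/p - 1/p^*_\alpha) = \alpha$, this manufactures exactly the weight $2^{j\alpha}$ appearing in the Besov seminorm, so a dyadic summation --- using H\"older in $\ell^q$ together with the hypothesis $q \leq p^*_\alpha$ to pass to an $\ell^{p^*_\alpha}$ norm --- combined with the Littlewood--Paley characterisation of $L^{p^*_\alpha}$ yields $\|v\|_{L^{p^*_\alpha}} \lesssim \|v\|_{B^\alpha_{p,q}}$. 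The weaker embedding into $L^p$ as literally written in (a) is then a trivial byproduct, since $\|v\|_{L^p}$ is already a summand of the Besov norm.

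For part (b), I would work at the critical scaling $\alpha p = n$ and estimate the mean oscillation of $v$ directly on an arbitrary ball $B = B(x_0, r)$. Splitting $v = \sum_j \Delta_j v$ into a low- and a high-frequency part at the cutoff $j_0 \sim \log_2(1/r)$, I would bound the low-frequency contribution via
\[
|\Delta_j v(x) - (\Delta_j v)_B| \leq 2r \|\nabla \Delta_j v\|_{L^\infty} \lesssim r \cdot 2^{j} \cdot 2^{jn/p} \|\Delta_j v\|_{L^p}
\]
using the mean value theorem and Bernstein, and the high-frequency contribution via
\[
\fint_B |\Delta_j v|\,\dd x \leq \|\Delta_j v\|_{L^\infty} \lesssim 2^{jn/p} \|\Delta_j v\|_{L^p}.
\]
With $n/p = \alpha$, both dyadic series reduce to (truncated) geometric series in $2^{j\alpha}\|\Delta_j v\|_{L^p}$, and the choice $2^{j_0} \sim 1/r$ exactly cancels the $r$-dependence, giving $\sup_B \fint_B |v - v_B|\,\dd x \lesssim \|v\|_{B^\alpha_{p,q}}$, the BMO-seminorm bound claimed.

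The main obstacle is handling the summation rigorously at the endpoints. In part (a), one has to treat the borderline $q = p^*_\alpha$ carefully, where H\"older in $\ell^q$ degenerates and the argument reduces to a direct vector-valued estimate via the Littlewood--Paley square function. In part (b), the case $q = \infty$ requires replacing the H\"older step by a crude supremum, so one must verify uniform convergence of the low-frequency series and deal with the zeroth-order block $\Delta_0 v$ separately. Once these endpoint issues are settled, the whole scheme produces constants depending only on $n,\alpha,p,q$, as required by a continuous embedding.
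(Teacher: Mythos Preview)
The paper does not prove this lemma; it simply cites \cite[Proposition 7.12]{haroske} in the sentence immediately preceding the statement. Your Littlewood--Paley route is precisely the machinery behind that reference, so there is no divergence of approach to discuss.

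One caveat on your sketch of part (a): the step ``H\"older in $\ell^q$ to pass to an $\ell^{p^*_\alpha}$ norm, then the Littlewood--Paley characterisation of $L^{p^*_\alpha}$'' does not go through as written once $2 < q \leq p^*_\alpha$. The Littlewood--Paley description of $L^r$ carries an inner $\ell^2$ norm, not $\ell^r$, and the embedding $B^0_{r,r}\hookrightarrow L^r$ actually fails for $r>2$. The sharp embedding $B^\alpha_{p,q}\hookrightarrow L^{p^*_\alpha}$ over the full range $q\le p^*_\alpha$ is the Jawerth--Franke theorem and needs an additional interpolation or duality argument on top of Bernstein plus H\"older. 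This gap does not affect the statement as literally printed (the $L^p$ embedding is trivial, as you correctly observe), nor the paper's later applications, which always pass through Lemma~\ref{3.2}(b) first and so effectively work at $q=1$; but if you want the genuine $L^{p^*_\alpha}$ conclusion for large $q$, the summation step needs more than what you wrote. Your plan for part (b) is standard and sound.
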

For further needs, we recall the following inclusions (\cite[Proposition 7.10 and Formula (7.35)]{haroske}).
\begin{lem}\label{3.2}
Suppose that $0 < \beta < \alpha < 1$.
\\ (a) If $1 < p < \infty$ and $1 \leq q \leq r \leq \infty$, then $B^{\alpha}_{p, q}(\mathbb{R}^{n}) \subset B^{\alpha}_{p, r}(\mathbb{R}^{n})$.
\\ (b) If $1 < p < \infty$ and $1 \leq q , r \leq \infty$, then $B^{\alpha}_{p, q}(\mathbb{R}^{n}) \subset B^{\beta}_{p, r}(\mathbb{R}^{n})$.
\\ (c) If $1 \leq q \leq \infty$, then $B^{\alpha}_{\frac{n}{\alpha}, q}(\mathbb{R}^{n}) \subset B^{\beta}_{\frac{n}{\beta}, q}(\mathbb{R}^{n})$.
\end{lem}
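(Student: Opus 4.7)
The plan is to establish the three inclusions in order: parts (a) and (b) will follow from the equivalent ``radial'' form of the Besov seminorm together with elementary sequence-space manipulations, while part (c) is the critical Besov--Sobolev embedding and is the main obstacle.

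For part (a), I would start from the well-known equivalent expression
$$[v]_{B^\alpha_{p,q}(\mathbb{R}^n)}^q \approx \int_0^1 \left( \frac{\omega_p(v,t)}{t^\alpha} \right)^q \frac{dt}{t}, \qquad \omega_p(v,t) := \sup_{|h| \leq t} \|\tau_h v - v\|_{L^p(\mathbb{R}^n)},$$
obtained by polar coordinates in the defining integral together with the norm-equivalence reproduced just before Lemma \ref{3.1} (which allows one to restrict the integration to $t \in (0,1]$, the tail being absorbed into $\|v\|_{L^p}$). Since $\omega_p(v,\cdot)$ is non-decreasing, on each dyadic interval $(s,2s)$ one has $t^{-\alpha}\omega_p(v,t) \geq 2^{-\alpha}\,s^{-\alpha}\omega_p(v,s)$ and consequently
$$\bigl(s^{-\alpha}\omega_p(v,s)\bigr)^q \leq \frac{2^{\alpha q}}{\log 2}\int_s^{2s}\left( \frac{\omega_p(v,t)}{t^\alpha} \right)^q \frac{dt}{t} \leq c\,[v]_{B^\alpha_{p,q}}^q,$$
which yields $[v]_{B^\alpha_{p,\infty}} \leq c\,[v]_{B^\alpha_{p,q}}$. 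For general $r \in [q,\infty)$, the factorization $(t^{-\alpha}\omega_p(v,t))^r = (t^{-\alpha}\omega_p(v,t))^{r-q}\,(t^{-\alpha}\omega_p(v,t))^q$ together with this uniform bound gives $[v]_{B^\alpha_{p,r}} \leq c\,[v]_{B^\alpha_{p,\infty}}^{(r-q)/r}\,[v]_{B^\alpha_{p,q}}^{q/r}$, completing (a).

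For part (b), applying (a) with $r = \infty$ reduces the task to proving $B^\alpha_{p,\infty}(\mathbb{R}^n) \subset B^\beta_{p,r}(\mathbb{R}^n)$. The defining bound $\omega_p(v,t) \leq [v]_{B^\alpha_{p,\infty}}\,t^\alpha$ then yields
$$\int_0^1 \left( \frac{\omega_p(v,t)}{t^\beta} \right)^r \frac{dt}{t} \leq [v]_{B^\alpha_{p,\infty}}^r \int_0^1 t^{r(\alpha-\beta)-1}\,dt = \frac{[v]_{B^\alpha_{p,\infty}}^r}{r(\alpha-\beta)} < \infty,$$
because $\alpha - \beta > 0$; the case $r = \infty$ is immediate from $t^{-\beta}\omega_p(v,t) \leq [v]_{B^\alpha_{p,\infty}}\,t^{\alpha-\beta}$ on $(0,1]$.

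Part (c) is the main obstacle, being the critical Besov--Sobolev embedding on the scale line $s - n/p = 0$, which cannot be deduced from (a)--(b) alone. The plan is to pass to the equivalent Littlewood--Paley characterization of Besov spaces, decompose $v = \sum_j \Delta_j v$, and apply Bernstein's inequality: since $n/\alpha \leq n/\beta$, one has
$$\|\Delta_j v\|_{L^{n/\beta}(\mathbb{R}^n)} \leq c\,2^{jn(\alpha/n - \beta/n)}\|\Delta_j v\|_{L^{n/\alpha}(\mathbb{R}^n)} = c\,2^{j(\alpha - \beta)}\|\Delta_j v\|_{L^{n/\alpha}(\mathbb{R}^n)}.$$
Multiplying by the Besov weight $2^{j\beta}$ and taking the $\ell^q$-norm in $j$ converts the $B^\beta_{n/\beta,q}$ seminorm into the $B^\alpha_{n/\alpha,q}$ seminorm up to a constant, giving the desired inclusion. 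Since this is precisely the content of Formula (7.35) in \cite{haroske}, it may also be invoked directly from the cited reference.
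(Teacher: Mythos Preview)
Your argument is correct. For parts (a) and (b) the modulus-of-continuity computation is standard and goes through exactly as written; for part (c) the Littlewood--Paley/Bernstein argument is the usual proof of the critical embedding and the exponent bookkeeping $2^{j\beta}\|\Delta_j v\|_{L^{n/\beta}}\le c\,2^{j\alpha}\|\Delta_j v\|_{L^{n/\alpha}}$ is accurate.

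The only thing to note is that the paper does not give its own proof of this lemma at all: it is stated as a recalled fact, with the three parts attributed directly to \cite[Proposition 7.10 and Formula (7.35)]{haroske}. So your self-contained sketch goes further than the paper, which simply cites the reference. Your closing remark that part (c) ``may also be invoked directly from the cited reference'' is thus precisely what the paper does.
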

Combining Lemmas \ref{3.1} and \ref{3.2}, we get the following Sobolev type embedding theorem for Besov spaces $B^\alpha_{p,\infty}(\mathbb{R}^n)$.

\begin{lem}\label{besovembed}
Suppose that $0 < \alpha < 1$ and $1 < p < \frac{n}{\alpha}$. There is a continuous embedding $B^\alpha_{p,\infty}(\mathbb{R}^n) \subset L^{p^*_\beta}(\mathbb{R}^n)$, for every $0 < \beta < \alpha$. Moreover, the following local estimate
\begin{equation}
\Vert F \Vert _{L^{\frac{np}{n-\beta p}}(B_\varrho)} \leq c (\Vert F \Vert_{L^{p}(B_R)} + [F]_{B^{\alpha}_{p,q}(B_R)})
\end{equation}
holds for every ball $B_\varrho \subset B_R$, with $c=c(n,R,\varrho,\alpha,\beta)$.
\end{lem}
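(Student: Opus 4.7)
The plan is to obtain the global embedding by chaining the two previously stated embedding lemmas, and then deduce the local estimate by a standard cut-off argument.

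For the global part, since $0 < \beta < \alpha < 1$ and $1 < p < \infty$, Lemma \ref{3.2}(b) (with $q = \infty$, $r = p^{*}_{\beta}$) gives the continuous embedding
$$ B^{\alpha}_{p,\infty}(\mathbb{R}^{n}) \subset B^{\beta}_{p, p^{*}_{\beta}}(\mathbb{R}^{n}). $$
Since the hypothesis $p < n/\alpha$ implies $p < n/\beta$, we may apply Lemma \ref{3.1}(a) with $\alpha$ replaced by $\beta$ and $q = p^{*}_{\beta}$ to obtain
$$ B^{\beta}_{p, p^{*}_{\beta}}(\mathbb{R}^{n}) \subset L^{p^{*}_{\beta}}(\mathbb{R}^{n}). $$
Composing these two continuous embeddings yields $B^{\alpha}_{p,\infty}(\mathbb{R}^{n}) \subset L^{p^{*}_{\beta}}(\mathbb{R}^{n})$ with the corresponding norm inequality on $\mathbb{R}^{n}$.

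For the local estimate on balls $B_{\varrho} \subset B_{R}$, I would reduce to the global case by a cut-off. Choose $\eta \in C_{c}^{\infty}(B_{R})$ with $0 \leq \eta \leq 1$, $\eta \equiv 1$ on $B_{\varrho}$ and $|\nabla \eta| \leq c/(R-\varrho)$, and set $\tilde{F} := \eta F$, extended by zero outside $B_{R}$. Since $\tilde{F} = F$ on $B_{\varrho}$, the global embedding applied to $\tilde{F}$ gives
$$ \Vert F \Vert_{L^{p^{*}_{\beta}}(B_{\varrho})} \leq \Vert \tilde{F} \Vert_{L^{p^{*}_{\beta}}(\mathbb{R}^{n})} \leq c\bigl(\Vert \tilde{F} \Vert_{L^{p}(\mathbb{R}^{n})} + [\tilde{F}]_{B^{\alpha}_{p,\infty}(\mathbb{R}^{n})}\bigr). $$
The $L^{p}$ part is immediate: $\Vert \tilde{F} \Vert_{L^{p}(\mathbb{R}^{n})} \leq \Vert F \Vert_{L^{p}(B_{R})}$. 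For the Besov seminorm of $\tilde F$, I would use the equivalent seminorm in which one integrates only over $|h| \leq \delta$ for some small $\delta$ (as recalled in the paper right after the definition of $B^{\alpha}_{p,q}$), with $\delta$ chosen smaller than $\mathrm{dist}(B_{\varrho}, \partial B_{R})$. For such $h$, the identity
$$ \eta(x+h)F(x+h) - \eta(x)F(x) = \eta(x+h)\bigl(F(x+h)-F(x)\bigr) + \bigl(\eta(x+h)-\eta(x)\bigr)F(x) $$
together with $\Vert \eta \Vert_{\infty} \leq 1$ and $|\eta(x+h)-\eta(x)| \leq c\,|h|/(R-\varrho)$ yields
$$ \Vert \tau_{h}\tilde{F}\Vert_{L^{p}(\mathbb{R}^{n})} \leq [F]_{B^{\alpha}_{p,\infty}(B_{R})}\,|h|^{\alpha} + \frac{c}{R-\varrho}\,|h|\,\Vert F \Vert_{L^{p}(B_{R})}, $$
so that, since $|h|^{1-\alpha}$ is bounded on the small-$h$ regime,
$$ [\tilde{F}]_{B^{\alpha}_{p,\infty}(\mathbb{R}^{n})} \leq c(R,\varrho,\alpha)\bigl(\Vert F \Vert_{L^{p}(B_{R})} + [F]_{B^{\alpha}_{p,\infty}(B_{R})}\bigr). $$
Plugging this back gives the claimed local estimate.

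The only real technical point is the cut-off estimate for $[\tilde{F}]_{B^{\alpha}_{p,\infty}(\mathbb{R}^{n})}$: one must justify truncating the $h$-integration to $|h| \leq \delta$ and carefully control the contribution coming from the jumps across $\partial B_{R}$, which is precisely where extending by zero could blow up the seminorm if $\eta$ were not chosen to vanish at $\partial B_{R}$. Everything else is a direct bookkeeping assembly of the embeddings from Lemmas \ref{3.1} and \ref{3.2}.
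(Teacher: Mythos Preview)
Your proof is correct and follows exactly the approach indicated in the paper, which simply states ``Combining Lemmas \ref{3.1} and \ref{3.2}'' without further details. Your chaining $B^{\alpha}_{p,\infty}\subset B^{\beta}_{p,p^{*}_{\beta}}\subset L^{p^{*}_{\beta}}$ is precisely that combination, and the cut-off argument you supply for the local estimate is the standard way to localize such embeddings (the paper does not spell this out at all).
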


Given a domain $\Omega \subset \mathbb{R}^{n}$, we say that $v$ belongs to the local Besov space $ B^{\alpha}_{p, q,loc}$ if $\varphi \ v \in B^{\alpha}_{p, q}(\mathbb{R}^{n})$ whenever $\varphi \in \mathcal{C}^{\infty}_{c}(\Omega)$. It is worth noticing that one can prove suitable version of Lemma \ref{3.1} and Lemma \ref{3.2}, by using local Besov spaces.

The following Lemma and its proof can be found in \cite{baison.clop2017}.
\begin{lem}
A function $v \in L^{p}_{loc}(\Omega)$ belongs to the local Besov space $B^{\alpha}_{p,q,loc}$ if, and only if,
\begin{center}
$\biggl\Vert \dfrac{\tau_{h}v}{|h|^{\alpha}} \biggr\Vert_{L^{q}\bigl(\frac{\dd h}{|h|^{n}};L^{p}(B)\bigr)}<  \infty$
\end{center}
for any ball $B\subset2B\subset\Omega$ with radius $r_{B}$. Here the measure $\frac{\dd h}{|h|^n}$ is restricted to the ball $B(0,r_B)$ on the h-space.
\end{lem}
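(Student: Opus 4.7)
The plan is to prove each implication of the equivalence separately, based on the discrete Leibniz identity
$$\tau_h(\varphi v)(x) \;=\; \varphi(x+h)\,\tau_h v(x) \;+\; v(x)\,\tau_h \varphi(x),$$
which lets me transfer between differences of $v$ on subdomains and differences of $\varphi v$ on $\R^n$. Throughout, for a fixed $\varphi \in C^\infty_c(\Omega)$ I denote $K = \mathrm{supp}\,\varphi$ and $d = \mathrm{dist}(K, \partial\Omega) > 0$.

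\textbf{Direction ($\Rightarrow$).} Fix a ball $B$ with $2B \subset \Omega$ and choose $\varphi \in C^\infty_c(\Omega)$ with $\varphi \equiv 1$ on $2B$. By the definition of $B^{\alpha}_{p,q,\mathrm{loc}}$, the product $\varphi v$ lies in $B^{\alpha}_{p,q}(\R^n)$. For $|h| < r_B$ and $x \in B$, both $x$ and $x+h$ lie in $2B$, so $\tau_h v(x) = \tau_h(\varphi v)(x)$. Restricting first to $x \in B$ and then to $|h| < r_B$ in the Besov seminorm of $\varphi v$ immediately yields
$$\Bigl\| \tau_h v / |h|^\alpha \Bigr\|_{L^q(\dd h/|h|^n;\, L^p(B))} \;\leq\; [\varphi v]_{B^{\alpha}_{p,q}(\R^n)} \;<\; \infty.$$

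\textbf{Direction ($\Leftarrow$).} For arbitrary $\varphi \in C^\infty_c(\Omega)$, I need to bound $\|\varphi v\|_{B^{\alpha}_{p,q}(\R^n)}$. The $L^p$-piece is immediate from $v \in L^p_{\mathrm{loc}}(\Omega)$ and the boundedness and compact support of $\varphi$. For the seminorm, set $R = d/3$, let $K_R = \{x : \mathrm{dist}(x,K) \leq R\}$, and choose a finite covering $K_R \subset \bigcup_{i=1}^N B_i$ with $B_i = B(x_i, R)$ and $2B_i \subset \Omega$. Split the $h$-integration into the regions $|h| > R$ and $|h| \leq R$. For $|h| > R$ use the trivial bound $\|\tau_h(\varphi v)\|_{L^p} \leq 2\|\varphi v\|_{L^p}$ against the convergent tail $\int_{|h|>R} |h|^{-\alpha q - n}\,\dd h$. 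For $|h| \leq R$ apply the Leibniz split: the piece $v\,\tau_h \varphi$ satisfies $|\tau_h \varphi| \leq \|\nabla\varphi\|_\infty |h|$ and is supported in $K_R$, producing the integrable weight $|h|^{(1-\alpha)q - n}$ near the origin (since $(1-\alpha)q > 0$); the piece $\varphi(\cdot + h)\tau_h v$ is supported in $K - h \subset K_R$, hence its $L^p(\R^n)$-norm is bounded by $\|\varphi\|_\infty \sum_i \|\tau_h v\|_{L^p(B_i)}$, and applying the hypothesis on each $B_i$ (with a power-mean or concavity inequality to pass $q/p$ through the finite sum) gives the required finite bound.

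The main obstacle is the bookkeeping in the reverse direction, namely choosing a finite cover $\{B_i\}$ of the fattening $K_R$ of $\mathrm{supp}\,\varphi$ so that every $2B_i$ still lies in $\Omega$ and simultaneously $K - h \subset \bigcup_i B_i$ for all $|h| \leq R$; once $R$ is fixed smaller than $\mathrm{dist}(K, \partial\Omega)/2$, the covering argument makes the hypothesis directly applicable on each $B_i$ and the remaining estimates are standard.
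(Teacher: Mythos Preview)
Your argument is correct and follows the standard approach via the discrete Leibniz rule; this is essentially how the result is proved in the reference the paper cites. Note that the paper itself does not give a proof of this lemma but simply refers to \cite{baison.clop2017}, so there is no in-paper argument to compare against.

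A couple of minor bookkeeping remarks. First, your choice $R=d/3$ already guarantees $2B_i\subset\Omega$ when the centers $x_i$ are taken in $K_R$ (since $\mathrm{dist}(x_i,\partial\Omega)\geq d-R=2d/3=2R$), so the closing sentence about ``$R$ smaller than $\mathrm{dist}(K,\partial\Omega)/2$'' is redundant and slightly inconsistent with the earlier fixed value; just keep $R=d/3$. Second, for the finite-sum step it is cleanest to write
\[
\Bigl(\sum_{i=1}^N a_i\Bigr)^q \;\leq\; N^{\,q-1}\sum_{i=1}^N a_i^{\,q}
\]
(valid for $q\geq 1$) rather than invoking ``concavity''; for $q=\infty$ the analogous step is a trivial supremum bound. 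With these cosmetic fixes the proof is complete.
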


It is known that Besov-Lipschitz spaces of fractional order $\alpha \in (0,1)$ can be characterized in pointwise terms. Given a measurable function $v:\mathbb{R}^{n} \rightarrow \mathbb{R}$, a \textit{fractional $\alpha$-Hajlasz gradient for $v$} is a sequence $\{g_{k}\}_{k}$ of measurable, non-negative functions $g_{k}:\mathbb{R}^{n} \rightarrow \mathbb{R}$, together with a null set $N\subset\mathbb{R}^{n}$, such that the inequality 
\begin{center}
$|v(x)-v(y)|\leq (g_{k}(x)+g_{k}(y))|x-y|^{\alpha}$
\end{center} 
holds whenever $k \in \mathbb{Z}$ and $x,y \in \mathbb{R}^{n}\setminus N$ are such that $2^{-k} \leq|x-y|<2^{-k+1}$. We say that $\{g_{k}\}_{k} \in l^{q}(\mathbb{Z};L^{p}(\mathbb{R}^{n}))$ if
\begin{center}
$\Vert \{g_{k}\}_{k} \Vert_{l^{q}(L^{p})}=\biggl(\displaystyle\sum_{k \in \mathbb{Z}}\Vert g_{k} \Vert^{q}_{L^{p}(\mathbb{R}^{n})} \biggr)^{\frac{1}{q}}<  \infty$
\end{center} 

The following result was proved in \cite{koskela}.
\begin{thm}
Let $0< \alpha <1,$ $1 \leq p < \infty$ and $1\leq q \leq \infty $. Let $v \in L^{p}(\mathbb{R}^{n})$. One has $v \in B^{\alpha}_{p,q}(\mathbb{R}^{n})$ if, and only if, there exists a fractional $\alpha$-Hajlasz gradient $\{g_{k}\}_{k} \in l^{q}(\mathbb{Z};L^{p}(\mathbb{R}^{n}))$ for $v$. Moreover,
\begin{center}
$\Vert v \Vert_{B^{\alpha}_{p,q}(\mathbb{R}^{n})}\simeq \inf \Vert \{g_{k}\}_{k} \Vert_{l^{q}(L^{p})},$
\end{center}
where the infimum runs over all possible fractional $\alpha$-Hajlasz gradients for $v$.
\end{thm}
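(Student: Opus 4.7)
The statement comprises two implications and a norm equivalence. The plan is to prove the "if" direction by a direct dyadic integration estimate on $h$-space, to construct an explicit fractional $\alpha$-Hajłasz gradient from averaged oscillations of $v$ for the "only if" direction, and to read off the norm equivalence from the two-sided constants obtained in the process.

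\textbf{Sufficiency.} Given $\{g_k\}_k \in l^q(\mathbb{Z}; L^p(\mathbb{R}^n))$ a fractional $\alpha$-Hajłasz gradient for $v$, the plan is to partition $h$-space into the dyadic annuli $A_k = \{h : 2^{-k} \leq |h| < 2^{-k+1}\}$. On each $A_k$, applying the defining pointwise inequality to the pair $x$, $x+h$ yields
$$\frac{|v(x+h) - v(x)|^p}{|h|^{\alpha p}} \leq 2^{p-1}\bigl(g_k(x)^p + g_k(x+h)^p\bigr).$$
Integration in $x$ with translation invariance gives $\int_{\mathbb{R}^n} |v(x+h)-v(x)|^p/|h|^{\alpha p}\,dx \leq 2^p\|g_k\|_{L^p}^p$ uniformly for $h \in A_k$. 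Since $\int_{A_k} dh/|h|^n$ equals a constant depending only on $n$, summing over $k \in \mathbb{Z}$ (or taking the supremum when $q = \infty$) yields $[v]_{B^\alpha_{p,q}(\mathbb{R}^n)} \leq C\|\{g_k\}\|_{l^q(L^p)}$, which combined with $v \in L^p$ gives one half of the norm equivalence.

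\textbf{Necessity.} For the converse, define at each Lebesgue point of $v$
$$g_k(x) := 2^{k\alpha}\left(\fint_{B(x, c\cdot 2^{-k})} |v(y) - v(x)|^p \, dy\right)^{1/p},$$
for a suitably large absolute constant $c>1$, and $g_k(x) := +\infty$ on the exceptional null set $N$ of non-Lebesgue points. For Lebesgue points $x, y$ with $2^{-k}\leq |x-y|<2^{-k+1}$, one splits via the average $v_B$ over a ball $B$ containing both and comparable to $B(x,c\cdot 2^{-k})$ and $B(y,c\cdot 2^{-k})$ up to a factor depending only on $n$; the triangle inequality together with Jensen then delivers
$$|v(x) - v(y)| \leq |v(x) - v_B| + |v(y) - v_B| \leq C\bigl(g_k(x) + g_k(y)\bigr)|x-y|^\alpha,$$
and the multiplicative constant $C$ is absorbed by relabelling $g_k$. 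For the $l^q(L^p)$ bound, Fubini and the substitution $y=x+h$ give, after using that $2^{k\alpha p}|h|^{\alpha p+n}\leq C|B(0,c\cdot 2^{-k})|$ on $\{|h| \leq c\cdot 2^{-k}\}$,
$$\|g_k\|_{L^p}^p \leq C\int_{|h|\leq c\cdot 2^{-k}} \frac{\int_{\mathbb{R}^n}|v(x+h)-v(x)|^p\,dx}{|h|^{\alpha p}}\,\frac{dh}{|h|^n}.$$
A dyadic decomposition in $h$ followed by a Hardy-type summation in $k$ (with the usual case analysis $q\gtrless p$ and a separate argument for $q=\infty$) then yields $\|\{g_k\}\|_{l^q(L^p)} \leq C[v]_{B^\alpha_{p,q}(\mathbb{R}^n)}$.

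\textbf{Main obstacle.} The genuinely delicate step is the necessity direction, specifically the passage from the fixed-scale averaged oscillations defining $g_k$ into the scale-invariant Besov measure $dh/|h|^n$ without losing the correct exponents $\alpha p$ and $n$. The summation over $k$ introduces an $l^{q/p}$ rearrangement that must be handled by Hardy-type inequalities and which behaves differently in the regimes $q\geq p$, $q<p$, and at the endpoint $q=\infty$. The exceptional null set $N$ also needs to be chosen uniformly in $k$, which is automatic once the standard Lebesgue set of $v$ is used, but must be stated carefully so that the pointwise Hajłasz condition is meaningful at every scale.
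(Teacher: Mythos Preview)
The paper does not give its own proof of this theorem; it is quoted verbatim as a known result and attributed to Koskela, Yang and Zhou \cite{koskela}. There is therefore no argument in the paper to compare your proposal against.

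On the proposal itself: the sufficiency half is correct and standard. In the necessity half your construction works cleanly for $q\geq p$, but there is a genuine gap when $q<p$. Writing $\phi(h)=\int_{\mathbb{R}^n}|\tau_h v|^p\,dx$ and $\beta_j=2^{j\alpha p}\fint_{A_j}\phi$, your estimate gives $\Vert g_k\Vert_{L^p}^p\leq C\sum_{j\geq k}2^{-(j-k)(\alpha p+n)}\beta_j$; after the Hardy/Young summation in $k$ this produces $\sum_k\Vert g_k\Vert_{L^p}^q\leq C\sum_j 2^{j\alpha q}\bigl(\fint_{A_j}\phi\bigr)^{q/p}$, whereas the Besov seminorm controls $\sum_j 2^{j\alpha q}\fint_{A_j}\phi^{q/p}$. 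For $q/p<1$ the map $t\mapsto t^{q/p}$ is concave and Jensen gives the inequality in the \emph{wrong} direction, so the two quantities are not comparable by this route. Your parenthetical ``usual case analysis $q\gtrless p$'' does not indicate how this is repaired, and the exponential kernel alone cannot fix it. The argument in \cite{koskela} avoids this obstruction by building the Hajłasz sequence from a telescoping chain of ball averages (or medians) rather than from the single-scale $L^p$-oscillation you use, which also allows them to reach the quasi-Banach range. Your outline is essentially complete for $q\geq p$ and $q=\infty$; for $q<p$ a different construction of $\{g_k\}$ is needed.
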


\subsection{Difference quotient}
We recall some properties of the finite difference quotient operator that will be needed in the sequel. Let us recall that, for every function $F:\mathbb{R}^{n}\rightarrow \mathbb{R}$ the finite difference operator is defined by
\begin{center}
$\tau_{s,h}F(x)=F(x+he_{s})-F(x)$
\end{center}
where $h \in \mathbb{R}^{n}$, $e_{s}$ is the unit vector in the $x_{s}$ direction and $s \in \{1,...,n\}$.
\\We start with the description of some elementary properties that can be found, for example, in \cite{giusti}.
\begin{prop}\label{rappincr}
Let $F$ and $G$ be two functions such that $F,G \in W^{1,p}(\Omega)$, with $p \geq1$, and let us consider the set
\begin{center}
$\Omega_{|h|} = \{ x \in \Omega : \text{dist} (x,\partial \Omega)> |h|  \}$.
\end{center}
Then
\\(i) $\tau_{h}F \in W^{1,p}(\Omega_{|h|})$ and 
\begin{center}
$D_{i}(\tau_{h}F)=\tau_{h}(D_{i}F)$.
\end{center}
(ii) If at least one of the functions $F$ or $G$ has support contained in $\Omega_{|h|}$, then
\begin{center}
$\displaystyle\int_{\Omega}F \tau_h G \dd x = \displaystyle\int_{\Omega} G \tau_{-h}F \dd x$.
\end{center}
(iii) We have $$\tau_h (FG)(x)= F(x+h)\tau_h G(x)+G(x) \tau_h F(x).$$
\end{prop}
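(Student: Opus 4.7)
The plan is to verify the three statements essentially by direct manipulation, treating $\tau_{h}$ as an affine combination of a translation and the identity.

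For part (i), I would first observe that if $F\in W^{1,p}(\Omega)$ then the translate $F(\cdot+h e_s)$ lies in $W^{1,p}(\Omega_{|h|})$: for any test function $\varphi\in C^\infty_c(\Omega_{|h|})$, the function $\varphi(\cdot-h e_s)$ is an admissible test function in $\Omega$, and an integration by parts after the change of variables $y=x+h e_s$ shows that the weak derivative of $F(\cdot+h e_s)$ is exactly $(D_iF)(\cdot+h e_s)$. Since $\tau_h F= F(\cdot+h e_s)-F$ is the difference of two $W^{1,p}(\Omega_{|h|})$ functions, it lies in the same space, and the identity $D_i(\tau_h F)=\tau_h(D_i F)$ follows from the linearity of the weak derivative.

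For part (ii), I would simply perform a change of variables. Writing
$$\int_\Omega F(x)\tau_h G(x)\,dx=\int_\Omega F(x)G(x+h e_s)\,dx-\int_\Omega F(x)G(x)\,dx,$$
and substituting $y=x+h e_s$ in the first integral, one gets $\int F(y-h e_s)G(y)\,dy$. The hypothesis that at least one of $F$, $G$ is supported in $\Omega_{|h|}$ ensures that the translated integrand is supported inside $\Omega$, so the change of variables produces no boundary contribution and the domain of integration stays $\Omega$. Combining the two terms yields $\int_\Omega G(y)[F(y-h e_s)-F(y)]\,dy=\int_\Omega G\,\tau_{-h}F\,dy$. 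The symmetric case (support of $G$ in $\Omega_{|h|}$) is identical after relabeling.

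Part (iii) is a purely algebraic identity obtained by adding and subtracting $F(x+h)G(x)$:
$$F(x+h)G(x+h)-F(x)G(x)=F(x+h)\bigl[G(x+h)-G(x)\bigr]+G(x)\bigl[F(x+h)-F(x)\bigr],$$
which is exactly $F(x+h)\tau_hG(x)+G(x)\tau_hF(x)$. No regularity hypothesis is needed here beyond the pointwise existence of $F$ and $G$.

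The only real subtlety is bookkeeping the supports in (ii) so that the change of variables does not produce integration outside $\Omega$; everything else is a direct consequence of the definitions and the linearity/translation-invariance of weak differentiation.
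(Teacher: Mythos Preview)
Your argument is correct and is the standard verification of these elementary properties. Note that the paper does not actually supply a proof of this proposition: it is stated as a collection of well-known facts with a reference to Giusti's monograph, so there is no ``paper's proof'' to compare against. Your direct computations (translation of weak derivatives for (i), change of variables with the support hypothesis for (ii), and the add-and-subtract trick for (iii)) are exactly how one would fill in the details.
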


The next result about finite difference operator is a kind of integral version of Lagrange Theorem.
\begin{lem}\label{ldiff}
If $0<\rho<R,$ $|h|<\frac{R-\rho}{2},$ $1<p<+\infty$ and $F,\ DF \in L^{p}(B_{R})$, then
\begin{center}
$\displaystyle\int_{B_{\rho}} |\tau_{h}F(x)|^{p} \dd x \leq c(n,p)|h|^{p} \displaystyle\int_{B_{R}} |DF(x)|^{p}\dd x$.
\end{center}
Moreover,
\begin{center}
$\displaystyle\int_{B_{\rho}} |F(x+h)|^{p} \dd x \leq  \displaystyle\int_{B_{R}} |F(x)|^{p}\dd x$.
\end{center}
\end{lem}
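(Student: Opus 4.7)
The plan is to reduce both inequalities to a change of variables, using smooth approximation only to justify the fundamental theorem of calculus for the first one. The key observation is the geometric hypothesis $|h|<(R-\rho)/2$, which guarantees the inclusion $B_\rho+th\subset B_R$ for every $t\in[0,1]$, so that all translates stay inside the larger ball where $F$ and $DF$ are controlled.

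For the second inequality, I would simply perform the translation $y=x+h$. Since $|h|<(R-\rho)/2<R-\rho$, the translated ball $B_\rho+h$ is contained in $B_R$, so
$$\int_{B_\rho}|F(x+h)|^p\,\dd x=\int_{B_\rho+h}|F(y)|^p\,\dd y\le\int_{B_R}|F(y)|^p\,\dd y,$$
which is the claim.

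For the first inequality, I would first assume $F\in C^1(\overline{B_R})$ (by density of $C^\infty(\overline{B_R})$ in $W^{1,p}(B_R)$, with the final estimate being preserved under $L^p$-convergence of $F$ and $DF$). Then by the fundamental theorem of calculus applied along the segment from $x$ to $x+h$,
$$\tau_h F(x)=F(x+h)-F(x)=\int_0^1 DF(x+th)\cdot h\,\dd t,$$
so by Jensen's inequality (or Hölder applied to $\dd t$ on $[0,1]$),
$$|\tau_h F(x)|^p\le |h|^p\int_0^1|DF(x+th)|^p\,\dd t.$$
Integrating over $B_\rho$ and swapping the two integrals by Fubini, for each fixed $t\in[0,1]$ the change of variables $y=x+th$ yields
$$\int_{B_\rho}|DF(x+th)|^p\,\dd x=\int_{B_\rho+th}|DF(y)|^p\,\dd y\le\int_{B_R}|DF(y)|^p\,\dd y,$$
because $|th|\le|h|<(R-\rho)/2$ forces $B_\rho+th\subset B_R$. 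Integrating in $t$ gives the desired estimate with constant $c(n,p)=1$; the stated constant $c(n,p)$ absorbs any minor variant (for instance, using a partition of $h$ into coordinate increments if one prefers to argue one direction at a time).

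There is really no hard step here: the only mild subtlety is the density argument, which is needed only because the lemma is stated for $F\in L^p(B_R)$ with $DF\in L^p(B_R)$ rather than for $F\in C^1$. One approximates $F$ by $F_\varepsilon=F*\eta_\varepsilon$ on a slightly smaller ball, applies the above pointwise identity to $F_\varepsilon$, and passes to the limit using the $L^p$-continuity of translations together with $F_\varepsilon\to F$ and $DF_\varepsilon\to DF$ in $L^p_{\mathrm{loc}}$. Once smoothness is justified, the inclusion $B_\rho+th\subset B_R$ supplied by the quantitative gap $|h|<(R-\rho)/2$ does all the work.
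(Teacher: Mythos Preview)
Your proof is correct and is in fact the standard argument for this elementary lemma. Note, however, that the paper does not actually prove Lemma~\ref{ldiff}: it is stated in Section~\ref{notprel} as a preliminary result (``a kind of integral version of Lagrange Theorem'') without proof, the implicit reference being to Giusti~\cite{giusti}. Your argument---fundamental theorem of calculus along the segment $t\mapsto x+th$, Jensen in $t$, Fubini, and the translation bound coming from $B_\rho+th\subset B_R$---is exactly the classical proof one finds in the literature, so there is nothing to compare.
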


\section{Higher integrability}\label{higher_int}
The results contained in this section will be crucial for the comparison argument presented in Section \ref{seccomp}. 

Let $u$ be a solution to the obstacle problem \eqref{obpro} and fix a ball $B=B_{\frac{R}{2}}(x_0) \Subset \Omega $, for a given radius $R> 0$ and $x_0 \in \Omega$. Let us consider the so-called "freezed" functional
\begin{equation}\label{froint}
\displaystyle\int_{B} \tilde{F}(x,Dw) \dd x = \displaystyle\int_{B} b(x_0,u_B)H(x,Dw) \dd x,
\end{equation}
where $H$ was defined in \eqref{integrandh},
and let $v \in u + W^{1,p}_{0}(B)$ be the solution to
\begin{equation}\label{frozen}
\min \biggl\{ \displaystyle\int_{B} \tilde{F}(x,Dw) \dd x \ : \ w \in \mathcal{K}_{\psi}(\Omega), \ w=u \ \text{on} \ \partial B  \biggr\}.
\end{equation}
Now, we show that a local minimizer of functional \eqref{froint} satisfies a variational inequality. More precisely, we have 
\begin{prop}
A function $v \in u + W^{1,p}_{0}(B)$ is a solution to \eqref{frozen} if and only if it satisfies the following variational inequality
\begin{equation}\label{varin}
\displaystyle\int_{B} \langle D_{z}H(x,Dv)  ,D(\varphi-v) \rangle \dd x \geq 0 ,
\end{equation}
for every $\varphi \in u+W^{1,p}_0(B) \cap \mathcal{K}_{\psi}(\Omega)$ such that $H(x,D\varphi) \in L^{1}(B)$.
\end{prop}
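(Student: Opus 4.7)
The plan is to establish the standard convex-analytic equivalence between minimality and the variational inequality, exploiting the fact that the freezing procedure turns $b(x_0,u_B)$ into a positive constant, so that $\tilde{F}(x,\cdot)=b(x_0,u_B)H(x,\cdot)$ is $C^1$ and convex on $\mathbb{R}^n$ with $p,q$-growth.

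For the forward direction, I would fix an admissible test function $\varphi \in u+W^{1,p}_0(B)\cap \mathcal{K}_\psi(\Omega)$ with $H(x,D\varphi)\in L^1(B)$ and form the convex combination $v_t:=(1-t)v+t\varphi$ for $t\in(0,1)$. Since $v-u,\varphi-u\in W^{1,p}_0(B)$, also $v_t-u\in W^{1,p}_0(B)$; convexity of the obstacle constraint gives $v_t\geq \psi$ a.e., so $v_t$ is admissible for \eqref{frozen}. Minimality of $v$ then yields $\int_B t^{-1}[\tilde{F}(x,Dv_t)-\tilde{F}(x,Dv)]\,\dd x \geq 0$. The pointwise limit as $t\to 0^+$ is $\langle D_z\tilde{F}(x,Dv),D(\varphi-v)\rangle$, and I would pass to the limit under the integral via dominated convergence. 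Convexity of $\tilde{F}(x,\cdot)$ provides the integrable majorant
\[
\frac{\tilde{F}(x,Dv_t)-\tilde{F}(x,Dv)}{t} \;\leq\; \tilde{F}(x,D\varphi)-\tilde{F}(x,Dv),
\]
which lies in $L^1(B)$ by assumption on $\varphi$ and since $H(x,Dv)\in L^1(B)$ by the minimality of $v$. Dividing through by the positive constant $b(x_0,u_B)$ produces \eqref{varin}.

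For the converse, assuming \eqref{varin} and fixing any competitor $w\in \mathcal{K}_\psi(\Omega)$ with $w=u$ on $\partial B$, the minimality inequality is trivial unless $H(x,Dw)\in L^1(B)$. Under this integrability, pointwise convexity of $H(x,\cdot)$ in the gradient variable gives
\[
\tilde{F}(x,Dw)\;\geq\; \tilde{F}(x,Dv)+b(x_0,u_B)\,\langle D_z H(x,Dv), D(w-v)\rangle \quad \text{a.e.\ in } B,
\]
and integrating combined with \eqref{varin} applied to $\varphi=w$ yields $\int_B \tilde{F}(x,Dv)\,\dd x\leq \int_B \tilde{F}(x,Dw)\,\dd x$.

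The main technical obstacle is the rigorous justification of differentiation under the integral sign in the forward step, due to the $(p,q)$-growth bound $|D_z H(x,z)|\leq C(|z|^{p-1}+a(x)|z|^{q-1})$. The convexity majorant above bypasses the non-uniform control by providing a uniform-in-$t$ $L^1$ bound, but I would also need to verify that the limiting integrand $\langle D_z H(x,Dv),D(\varphi-v)\rangle$ is itself integrable; this follows from Young's inequality applied termwise to $|Dv|^{p-1}|D\varphi-Dv|$ and $a(x)|Dv|^{q-1}|D\varphi-Dv|$, combined with $H(x,Dv),H(x,D\varphi)\in L^1(B)$.
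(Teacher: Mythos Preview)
Your argument is correct and follows essentially the same route as the paper: form the convex combination $v+\varepsilon(\varphi-v)$, use minimality, and pass to the limit $\varepsilon\to 0$ under the integral via a dominating function. The only minor differences are that the paper obtains the dominating function through the mean-value theorem together with the bound $|\langle D_zH(x,z),\lambda\rangle|\leq C(H(x,z)+H(x,\lambda))$ rather than directly from the convexity majorant you use, and that the paper omits the converse direction you supply.
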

\begin{proof}
We set $g = v+\varepsilon(\fhi -v)$ for $\varepsilon \in (0,1)$, which belongs to the obstacle class, indeed
\begin{align*}
g = v+\varepsilon(\fhi -v) = \varepsilon \fhi +(1-\varepsilon) v \geq \psi.
\end{align*}
We first notice that $H(x,D(v+\varepsilon(\fhi-v)))\in L^1$. Moreover,
\begin{align*}
\iB H(x,Dv) \dd x \leq \iB H(x,Dv+\varepsilon D(\fhi-v)) \dd x,
\end{align*}
which leads to
\begin{align*}
\iB H(x,Dv+\varepsilon D(\fhi-v)) \dd x-\iB H(x,Dv) \dd x \geq 0.
\end{align*}
From Lagrange's theorem, for $\theta \in (0,1)$ it holds
\begin{align*}
\iB \langle D_z H(x,Dv+\varepsilon \theta D(\fhi-v)), \varepsilon D(\fhi-v)\rangle \dd x\geq 0.
\end{align*}
Since $\varepsilon > 0$,
\begin{align}\label{varineqfr1}
\iB \langle D_z H(x,Dv+\varepsilon \theta D(\fhi-v)), D(\fhi-v)\rangle \dd x\geq 0.
\end{align}
According to \cite[Lemma 2.2]{coscia}, it holds
\begin{align*}
|\langle D_z H(x,z),\lambda\rangle|\leq C\left( H(x,z)+H(x,\lambda)\right).
\end{align*}
Therefore,
\begin{align}\nonumber
&|\langle D_z H(x,Dv+\varepsilon \theta D(\fhi-v)),D(\fhi-v)\rangle|\\ \nonumber
\leq &C\left( H(x,Dv+\varepsilon \theta D(\fhi-v))+H(x,D(\fhi-v))\right)\\ \nonumber
\leq &C\left( H(x,Dv)+H(x,\varepsilon \theta D(\fhi-v))+H(x,D\fhi) +H(x,Dv)\right)\\ \label{epsthet}
\leq &C\left( H(x,Dv)+(\varepsilon \theta)^p H(x,D\fhi) +(\varepsilon \theta)^p H(x,Dv)+H(x,D\fhi) \right),
\end{align}
where in the last passage we also used the direct property $H(x,\varepsilon\theta z) \leq (\varepsilon \theta)^p H(x,z)$.
For $\varepsilon \to 0$, the second and the third term on the right hand side of \eqref{epsthet} go to zero. Hence, the right hand side tends to $C\left( H(x,Dv) +H(x,D\fhi)\right)$ in $L^1$. Then, we can pass to the limit for $\varepsilon \to 0$ in \eqref{varineqfr1} applying the Dominated convergence theorem, which concludes the proof.
\end{proof}

If $u$ is a solution to \eqref{obpro}, then we are able to establish for $u$ a higher integrability result.

\begin{thm}\label{higherint}
Let $u$ be a solution to the obstacle problem \eqref{obpro} where the integrand satisfies Assumption 1, for exponents $2 \leq p < q$ verifying
$$\dfrac{q}{p}< 1 + \dfrac{\alpha}{n}.$$ 
If the function $\psi$ is s.t. $H(x,D \psi) \in L^{m_1}_{\text{loc}}(\Omega)$, for some $m_1 > 1$, then there exist an exponent $m_1 > m_2 >1$ and a positive constant $C$ s.t. it holds
\begin{align*}
\left(\fibr2 (H(x,Du))^{m_2} \dd x\right)^\frac{1}{m_2} 
&\leq C \left[ \fibR H(x,Du)\dd x + \left(\fibR (H(x,D\psi))^{m_1} \dd x \right)^\frac{1}{m_1}\right].
\end{align*}
for all balls $B_{\frac{R}{2}} \subset B_R \Subset \Omega$.
\end{thm}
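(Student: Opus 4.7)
The plan is to establish a reverse Hölder inequality for $H(x,Du)$ on concentric balls and then invoke Gehring's lemma. The argument is organized in three steps.

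\textbf{Step 1 (Caccioppoli).} Fix $R/2 \leq t < s \leq R$ and a cutoff $\eta \in C_c^\infty(B_s)$ with $\eta \equiv 1$ on $B_t$ and $|D\eta| \leq 2/(s-t)$. The comparison function
$$v = \psi + (1-\eta)(u-\psi) = (1-\eta)u + \eta\psi$$
belongs to $\mathcal{K}_\psi(\Omega)$ (since $u \geq \psi$ and $1-\eta \in [0,1]$) and agrees with $u$ outside $B_s$. Minimality of $u$ gives
$$\int_{B_s} b(x,u) H(x,Du) \dd x \leq \int_{B_s} b(x,v) H(x,Dv) \dd x,$$
and Assumption 1(ii), which provides $\nu \leq b \leq L$, reduces this to $\int_{B_s} H(x,Du) \dd x \leq C \int_{B_s} H(x,Dv) \dd x$. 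Expanding $Dv = (1-\eta)Du + \eta D\psi + (\psi-u)D\eta$, using $(1-\eta)^q \leq (1-\eta)^p$ and the subadditivity of $|\cdot|^p,|\cdot|^q$, one obtains
$$H(x,Dv) \leq C\Big[(1-\eta)^p H(x,Du) + H(x,D\psi) + \frac{|u-\psi|^p}{(s-t)^p} + a(x)\frac{|u-\psi|^q}{(s-t)^q}\Big].$$
Since $(1-\eta)^p \equiv 0$ on $B_t$, a hole-filling argument followed by Lemma \ref{lm2} yields, for every ball $B_r \subset B_{2r} \subset \Omega$,
$$\fint_{B_r} H(x,Du) \dd x \leq C \fint_{B_{2r}} H(x,D\psi) \dd x + C \fint_{B_{2r}} \frac{|u-\psi|^p + a(x)|u-\psi|^q}{r^q} \dd x.$$

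\textbf{Step 2 (Sobolev--Poincaré).} Since $\eta(u-\psi) \in W^{1,p}_0(B_{2r})$, applying Sobolev--Poincaré at the exponent $p_* := np/(n+p)$ (so that $(p_*)^* = p$) and using $|D(\eta(u-\psi))|^{p_*} \leq C(|D(u-\psi)|^{p_*} + |u-\psi|^{p_*}/r^{p_*})$ leads, after standard manipulations, to
$$\fint_{B_r}\frac{|u-\psi|^p}{r^p} \dd x \leq C\left(\fint_{B_{2r}}|Du|^{p_*} \dd x\right)^{p/p_*} + C\fint_{B_{2r}}|D\psi|^p \dd x.$$
For the $a(x)$-weighted term, I use Assumption 1(i) to write $a(x) \leq \inf_{B_{2r}} a + C r^\alpha$, combined with a Sobolev--Poincaré estimate at the exponent $q_* := nq/(n+q)$. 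The gap condition $q/p < 1 + \alpha/n$ is used precisely here: it guarantees that the factor $r^\alpha$ coming from the oscillation of $a$ compensates the scaling mismatch between the $|Du|^q$ and $|Du|^p$ contributions, so that the resulting bound can be written as $\left(\fint_{B_{2r}} H(x,Du)^\theta\right)^{1/\theta}$ for some $\theta \in (0,1)$ plus a $D\psi$-forcing term bounded by the $L^{m_1}$ norm of $H(x,D\psi)$.

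\textbf{Step 3 (Reverse Hölder and Gehring).} Combining Steps 1 and 2 produces a reverse Hölder inequality of the form
$$\fint_{B_r} H(x,Du) \dd x \leq C\left(\fint_{B_{2r}} H(x,Du)^\theta \dd x\right)^{1/\theta} + C\left(\fint_{B_{2r}} H(x,D\psi)^{m_1} \dd x\right)^{1/m_1}$$
for some $\theta \in (0,1)$. The classical Gehring lemma then furnishes an exponent $m_2 \in (1,m_1)$ and the quantitative estimate in the theorem.

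\textbf{Main obstacle.} The genuinely delicate point is the treatment of the mixed term $a(x)|u-\psi|^q/r^q$: a bare Sobolev--Poincaré at the exponent $q$ would re-introduce $|Du|^q$ and so destroy the sub-unit exponent $\theta$ required by Gehring. Only by carefully combining the Hölder continuity of $a$ (which yields the $r^\alpha$ gain), the embedding at $q_*$, and the gap condition $q/p < 1 + \alpha/n$ can the estimate be closed with a single reverse-Hölder right-hand side of the form $H(x,Du)^\theta$ with $\theta<1$.
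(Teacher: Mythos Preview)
Your overall architecture (Caccioppoli inequality, then a double-phase Sobolev--Poincar\'e to produce a reverse H\"older inequality, then Gehring) is exactly that of the paper. The difficulty is not in Step~3, nor in your description of the ``main obstacle''; it is already in the interaction between Steps~1 and~2.

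The test function $v=(1-\eta)u+\eta\psi$ produces, after the expansion of $H(x,Dv)$, lower-order terms of the form $|u-\psi|^p/(s-t)^p$ and $a(x)|u-\psi|^q/(s-t)^q$. Once you hole-fill and apply Lemma~\ref{lm2}, those become $\fint_{B_{2r}}|u-\psi|^p/r^p$ and $\fint_{B_{2r}}a(x)|u-\psi|^q/r^q$ on \emph{generic} balls $B_r\subset B_{2r}$. At that stage the specific cut-off $\eta$ from Step~1 is gone, so your claim ``$\eta(u-\psi)\in W^{1,p}_0(B_{2r})$'' is no longer available. The function $u-\psi$ itself has neither compact support nor zero mean on $B_{2r}$, and if you introduce a fresh cut-off $\tilde\eta$ you pick up an extra term $\fint_{B_{4r}}|u-\psi|^{p_*}/r^{p_*}$ which, after H\"older, returns $\fint_{B_{4r}}|u-\psi|^p/r^p$ with a constant that need not be small; the iteration does not close. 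Splitting $|u-\psi|\le |u-u_{B_{2r}}|+|u_{B_{2r}}-\psi_{B_{2r}}|+|\psi-\psi_{B_{2r}}|$ does not help either, since the middle constant $|u_{B_{2r}}-\psi_{B_{2r}}|/r$ has no a priori control.

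The paper avoids this by choosing the competitor differently: with
\[
g=u-\eta\bigl[(u-u_{B_R})-(\psi-\psi_{B_R})\bigr],
\]
one still has $g\in\mathcal K_\psi$ and $g=u$ on $\partial B_s$, but the $D\eta$-contributions now involve $|u-u_{B_R}|$ and $|\psi-\psi_{B_R}|$ separately. These are mean-zero on $B_R$, so the intrinsic Sobolev--Poincar\'e inequality of Colombo--Mingione (\cite[Theorem~1.6]{colombo.mingione}) applies directly and yields
\[
\fint_{B_R}\tilde H\!\left(x,\tfrac{u-u_{B_R}}{R}\right)\dd x\le \left(\fint_{B_R}H(x,Du)^{d_2}\dd x\right)^{1/d_2},\qquad d_2<1,
\]
and similarly for $\psi$. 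This is precisely the step where the gap condition $q/p<1+\alpha/n$ enters, and your sketch of the splitting $a(x)\le\inf_{B}a+Cr^\alpha$ is in fact how that intrinsic inequality is proved; but you need to feed it the right (mean-zero) input. With this modification, Steps~2 and~3 go through as you outlined.
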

\begin{proof}
Let $\dfrac{R}{2}\leq t<s\leq R\leq 1$ and let $\eta \in C_0^\infty(B_R)$ be a cut-off function s.t. $0\leq \eta\leq 1, \eta \equiv 1 \textrm{ on } B_t, \eta \equiv 0 \textrm{ outside } B_s, |D\eta| \leq \dfrac{2}{s-t}$. We set $\fhi = \eta(x)(u(x)-u_{B_R})-\eta(x)(\psi(x)-\psi_{B_R})$ and $g = u-\fhi \in \mathcal{K}_\psi(\Omega)$. We observe that $g =u$ on $\partial B_s$ and $g=\psi-\psi_{B_R}+u_{B_R}$ on $B_t$, therefore $Dg = D \psi$ on $B_t$. Using Assumption 1 (ii) and the fact that $u$ is a local minimizer, we have
\begin{align*}
&\quad \ibt H(x,Du(x))\dd x\\
&\leq C \ibt F(x,u(x),Du(x))\dd x\\
&\leq C \ibs F(x,g(x),Dg(x))\dd x\\
&\leq C \ibs |Dg(x)|^p + a(x)|Dg(x)|^q \dd x\\
&\leq C \ibs \left[ |D\eta(x)|(\psi(x)-\psi_{B_R})+ \eta(x)|D\psi(x)| + |D\eta(x)|(u(x)-u_{B_R})+(1-\eta(x))|Du(x)|\right]^p\\
&\quad + a(x)\left[ |D\eta(x)|(\psi(x)-\psi_{B_R})+ \eta(x)|D\psi(x)| + |D\eta(x)|(u(x)-u_{B_R})+(1-\eta(x))|Du(x)|\right]^q \dd x\\
&\leq C \ibs (1-\eta(x))^p\left(|Du|^p+a(x)|Du|^q\right)\dd x\\
&\quad +C\ibs \left[\left\vert \dfrac{u(x)-u_{B_R}}{s-t}\right\vert^p + a(x)\left\vert \dfrac{u(x)-u_{B_R}}{s-t}\right\vert^q\right] \dd x\\
&\quad +C\ibs \left[\left\vert \dfrac{\psi(x)-\psi_{B_R}}{s-t}\right\vert^p + a(x)\left\vert \dfrac{\psi(x)-\psi_{B_R}}{s-t}\right\vert^q\right] \dd x\\
&\quad +C \ibs \left(|D\psi(x)|^p+a(x)|D\psi(x)|^q\right) \dd x\\
&\leq C\int_{B_s\setminus B_t}H(x,Du(x))\dd x\\
&\quad + \dfrac{C}{|s-t|^p}\ibR |u(x)-u_{B_R}|^p \dd x+\dfrac{C}{|s-t|^q}\ibR a(x)|u(x)-u_{B_R}|^q \dd x\\
&\quad + \dfrac{C}{|s-t|^p}\ibR |\psi(x)-\psi_{B_R}|^p \dd x+\dfrac{C}{|s-t|^q}\ibR a(x)|\psi(x)-\psi_{B_R}|^q \dd x\\
&\quad + C\ibR H(x,D\psi(x))\dd x.
\end{align*}
Adding the quantity $C \ibt H(x,Du(x)) \dd x$ to both sides of the previous estimate, by Lemma \ref{lm2} we get
\begin{align*}
\ibr2 H(x,Du(x)) \dd x \leq & C\left[\dfrac{1}{R^p}\ibR|u(x)-u_{B_R}|^p \dd x+ \dfrac{1}{R^q}\ibR a(x)|u(x)-u_{B_R}|^q \dd x \right.\\
&+\dfrac{1}{R^p}\ibR|\psi(x)-\psi_{B_R}|^p \dd x+ \dfrac{1}{R^q}\ibR a(x)|\psi(x)-\psi_{B_R}|^q \dd x\\
&\left.+ \ibR H(x,D\psi(x))\dd x\right].
\end{align*} 
Setting $\tilde{H}(x,u(x)):= |u(x)|^p + a(x)|u(x)|^q$ and $\tilde{H}(x,\psi(x)):= |\psi(x)|^p + a(x)|\psi(x)|^q$, we can write the previous inequality as
\begin{align}\nonumber
&\fibr2 H(x,Du(x)) \dd x \\ \label{cac1int}
\leq &\fibR \tilde{H}\left(x,\dfrac{u(x)-u_{B_R}}{R}\right)\dd x+\fibR \tilde{H}\left(x,\dfrac{\psi(x)-\psi_{B_R}}{R}\right)\dd x + \fibR H(x,D\psi(x)) \dd x.
\end{align}
According to \cite[Theorem 1.6]{colombo.mingione} and H\"{o}lder's inequality, it holds
\begin{align}\nonumber
\fibR \tilde{H}\left(x,\dfrac{u(x)-u_{B_R}}{R}\right)\dd x \leq &\left(\fibR \left(\tilde{H}\left(x,\dfrac{u(x)-u_{B_R}}{R}\right)\right)^{d_1}\dd x\right)^\frac{1}{d_1}\\ \label{mingpoincare1}
\leq &\left(\fibR \left(H(x,Du(x))\right)^{d_2}\dd x\right)^\frac{1}{d_2},
\end{align}
where $d_2<1<d_1$ depend on $n,p,q,\alpha$. Analogously, 
\begin{align}\nonumber
\fibR \tilde{H}\left(x,\dfrac{\psi(x)-\psi_{B_R}}{R}\right)\dd x \leq &\left(\fibR \left(\tilde{H}\left(x,\dfrac{\psi(x)-\psi_{B_R}}{R}\right)\right)^{d_1}\dd x\right)^\frac{1}{d_1}\\ \label{mingpoincare2}
\leq &\left(\fibR \left(H(x,D\psi(x))\right)^{d_2}\dd x\right)^\frac{1}{d_2}.
\end{align}
Inserting \eqref{mingpoincare1} and \eqref{mingpoincare2} in \eqref{cac1int} and exploiting H\"{o}lder's inequality, we infer
\begin{align}\label{cacd2}
\fibr2 H(x,Du(x)) \dd x \leq C \left[ \left(\fibR (H(x,Du(x)))^{d_2}\right)^\frac{1}{d_2} + \fibR H(x,D\psi(x)) \dd x\right].
\end{align}
Since $H(x,D\psi(x)) \in L^{m_1}$, for $m_1>1$, from Gehring's lemma proved in \cite{giusti} it follows that there exists $m_1>m_2>1$ s.t. $H(x,Du(x))\in L^{m_2}$. Then, holding to $d_2 <1$, we might write
\begin{align*}
\fibr2 (H(x,Du(x)))^{m_2} \dd x 
&\leq C \left[ \left(\fibR H(x,Du(x)) \dd x \right)^{m_2} + \fibR (H(x,D\psi(x)))^{m_2}\dd x \right]\\
&\leq C \left[ \left(\fibR H(x,Du(x)) \dd x \right)^{m_2} + \left(\fibR (H(x,D\psi(x))^{m_1} \dd x \right)^\frac{m_2}{m_1} \right].
\end{align*}
Hence,
\begin{align*}
\left(\fibr2 (H(x,Du(x)))^{m_2} \dd x\right)^\frac{1}{m_2} 
&\leq C \left[ \fibR H(x,Du(x))\dd x + \left(\fibR (H(x,D\psi(x))^{m_1} \dd x \right)^\frac{1}{m_1}\right].
\end{align*}
\end{proof}

The higher integrability of the minimizer $u$ stated in Theorem \ref{higherint} allows us to prove the following higher integrability up to the boundary result for the solution to the freezed obstacle problem \eqref{frozen}.
\begin{thm}\label{higint2}
Let $v \in u+W^{1,p}_0(B_{\frac{R}{2}})$ be a solution to the obstacle problem \eqref{frozen} where the integrand $\tilde{F}$ satisfies Assumption 1, for exponents $2 \leq p < q$ verifying
$$\dfrac{q}{p}< 1 + \dfrac{\alpha}{n}.$$
If the function $\psi$ is s.t. $H(x,D\psi) \in L^{m_1}_{\text{loc}}(\Omega)$, for some $m_1>1$, then $H(x,Du)  \in L^{m_2}_{\text{loc}}(\Omega)$, for some $m_1>m_2>1$, and there exist a constant $C$ and an exponent $m_3$, with $m_1>m_2>m_3>1$, s.t. $H(x,Dv)  \in L^{m_3}_{\text{loc}}(\Omega)$ and
\begin{align*}
\left(\fint_{B_\frac{R}{2}} \left(H(x,Dv)\right)^{m_3}\dd x \right)^\frac{1}{m_3}
\leq C\left[\left(\fibR (H(x,Du))^{m_2}\dd x \right)^\frac{1}{m_2}
+ \left(\fibR (H(x,D\psi))^{m_2} \dd x \right)^\frac{1}{m_2}\right].
\end{align*}

\end{thm}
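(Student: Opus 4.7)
The plan is to establish a reverse H\"older inequality for $H(x, Dv)$ on balls $B_\rho(x_0) \subset B_R$ with $x_0 \in B_{R/2}$ (including balls that straddle $\partial B_{R/2}$), and then invoke Gehring's lemma together with the higher integrability of $u$ from Theorem \ref{higherint}. The crucial device is the extension $v \equiv u$ on $B_R \setminus B_{R/2}$: since $v - u \in W^{1,p}_0(B_{R/2})$, this produces a function in $W^{1,p}(B_R)$ whose gradient agrees with $Du$ outside $B_{R/2}$, and it is precisely this extension that enables estimates on balls centered near $\partial B_{R/2}$.

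Fix $x_0 \in B_{R/2}$ and radii with $B_{2\rho}(x_0) \subset B_R$; for $\rho \leq t < s \leq 2\rho$, pick $\eta \in C_0^\infty(B_s(x_0))$ with $\eta \equiv 1$ on $B_t(x_0)$ and $|D\eta| \leq 2/(s-t)$. I propose to use the test function
\begin{equation*}
\varphi := (1-\eta) v + \eta u = v - \eta(v-u),
\end{equation*}
which is admissible in \eqref{frozen}: on the one hand $\varphi \geq \psi$ as a convex combination of functions $\geq \psi$; on the other hand $\varphi - u = (1-\eta)(v-u) \in W^{1,p}_0(B_{R/2})$ because $v - u$ itself has vanishing trace on $\partial B_{R/2}$, regardless of whether $B_s(x_0)$ sits inside $B_{R/2}$. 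Minimality of $v$ together with Assumption 1 (ii) (which gives only $\nu \leq b(x_0, u_B) \leq L$) yields
\begin{equation*}
\int_{B_s(x_0) \cap B_{R/2}} H(x, Dv) \dd x \leq C \int_{B_s(x_0) \cap B_{R/2}} H(x, D\varphi) \dd x.
\end{equation*}

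Expanding $D\varphi = (1-\eta)Dv + \eta Du - (v-u) D\eta$ and using the elementary bounds $H(x, \lambda z) \leq \lambda^p H(x, z)$ for $\lambda \in [0,1]$ together with $H(x, z_1 + z_2) \leq C(H(x, z_1) + H(x, z_2))$, followed by the hole-filling argument from Lemma \ref{lm2} (carried out just as in the proof of Theorem \ref{higherint}), one obtains
\begin{equation*}
\fint_{B_\rho(x_0)} H(x, Dv) \dd x \leq C \fint_{B_{2\rho}(x_0)} H(x, Du) \dd x + C \fint_{B_{2\rho}(x_0)} \tilde H\!\left(x, \frac{v-u}{\rho}\right) \dd x,
\end{equation*}
where $\tilde H(x, w) := |w|^p + a(x)|w|^q$ and, outside $B_{R/2}$, the expression $H(x, Dv)$ is read via the extension as $H(x, Du)$. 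Since $v - u \in W^{1,p}_0(B_{R/2})$, its zero-extension outside $B_{R/2}$ lies in $W^{1,p}(\mathbb{R}^n)$, and the double-phase Sobolev-Poincar\'e inequality from \cite[Theorem 1.6]{colombo.mingione} (the same tool already exploited in Theorem \ref{higherint}, and available under $q/p < 1 + \alpha/n$) converts the last integral into a bound of the form $(\fint_{B_{2\rho}(x_0)} (H(x, Dv) + H(x, Du))^{d_2} \dd x)^{1/d_2}$ for some $d_2 \in (0,1)$ depending only on $n, p, q, \alpha$.

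The combination of these two estimates gives a reverse H\"older inequality for $H(x, Dv) + H(x, Du)$ on every $B_{2\rho}(x_0) \subset B_R$. Gehring's lemma (see \cite{giusti}) then self-improves this inequality to the existence of an exponent $m_3 > 1$ with $H(x, Dv) \in L^{m_3}_{\text{loc}}(B_R)$; since Theorem \ref{higherint} already supplies $H(x, Du) \in L^{m_2}_{\text{loc}}$ with $m_2 \in (1, m_1)$, the self-improving exponent can be taken in $(1, m_2)$, and the quantitative reverse-H\"older bound produced by Gehring delivers the estimate claimed in the statement. The main technical hurdle is the uniform admissibility of $\varphi$ for $x_0$ arbitrarily close to $\partial B_{R/2}$: this is precisely what the $v \equiv u$ extension secures, mirroring the Cupini-Fusco-Petti strategy of \cite{cupini.fusco.petti} in the present double-phase setting.
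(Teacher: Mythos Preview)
Your overall framework---extend $v$ by $u$ outside $B_{R/2}$, establish a reverse H\"older inequality for the extension, and invoke Gehring---is precisely the strategy of the paper. The gap is in the Sobolev--Poincar\'e step, and it occurs exactly for the balls you do not single out: when $B_{2\rho}(x_0)\Subset B_{R/2}$ lies strictly in the interior, the function $v-u$ carries no vanishing condition on $B_{2\rho}(x_0)$. Its zero-extension outside $B_{R/2}$ is irrelevant there, and $v-u$ is neither zero-mean on $B_{2\rho}(x_0)$ nor zero on any portion of it. Hence the inequality
\[
\fint_{B_{2\rho}} \tilde H\!\left(x,\tfrac{v-u}{\rho}\right)\dd x \;\lesssim\; \Bigl(\fint_{B_{2\rho}} \bigl(H(x,Dv)+H(x,Du)\bigr)^{d_2}\dd x\Bigr)^{1/d_2}
\]
does \emph{not} follow from \cite[Theorem~1.6]{colombo.mingione}: that result controls $\tilde H(x,(w-w_B)/\rho)$, and the leftover constant term $\tilde H\bigl(x,(v-u)_{B_{2\rho}}/\rho\bigr)$ is not bounded by any local quantity. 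The global membership $v-u\in W^{1,p}_0(B_{R/2})$ only delivers a Poincar\'e inequality at scale $R$, not at scale $\rho$, so it cannot rescue the estimate.

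This is why the paper does not use your single test function uniformly. It splits into two regimes: for interior balls $B_\rho\subset B_{R/2}$ it reuses the Caccioppoli inequality of Theorem~\ref{higherint} (comparison of $v$ to the obstacle $\psi$), whose oscillation terms $(v-v_B)/\rho$ and $(\psi-\psi_B)/\rho$ are zero-mean and to which \cite[Theorem~1.6]{colombo.mingione} applies directly, yielding an extra $H(x,D\psi)$ on the right-hand side. Your comparison map $\varphi=(1-\eta)v+\eta u$ is exactly the one the paper employs for balls centred on $\partial B_{R/2}$, where the zero-extension of $v-u$ vanishes on a fixed fraction of $B_\rho$ and a Poincar\'e inequality for functions with partial zero trace (\cite[Remark~2]{colombo.mingione}) is available; a covering argument then handles balls that merely intersect $\partial B_{R/2}$. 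Both right-hand contributions $H(x,Du)$ and $H(x,D\psi)$ are therefore needed in the reverse H\"older inequality before Gehring is applied.
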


\begin{proof}
We start setting 
\begin{equation}\label{defw}
w(x) := 
\left\lbrace{
\begin{array}{l}
v(x) \qquad \textrm{if } x\in B_\frac{R}{2},  
\\
u(x) \qquad \textrm{if } x\in B_R \setminus B_{\frac{R}{2}}
\end{array}}
\right.
\end{equation}
We first consider $B_\rho(x_1) \subset B_\frac{R}{2}$. In this case the Caccioppoli inequality \eqref{cacd2} holds, namely
\begin{align}\label{cadis}
\fibro2 H(x,Dv)\dd x \leq C \left[ \left(\fibrho (H(x,Dv))^{d_2} \dd x \right)^\frac{1}{d_2} + \fibrho H(x,D\psi) \dd x\right].
\end{align}
Let us now focus on the case $B_\rho(x_1) \subset B_R$, with $x_1 \in \partial B_\frac{R}{2}$. Fix $\frac{\rho}{2}\leq t < s \leq \rho$ and a cut-off function $\eta$ between $B_s(x_1)$ and $B_t(x_1)$, with $|D\eta| \leq \dfrac{2}{t-s}$.
Let us set $g(x) := (1-\eta(x)) v +\eta(x) u(x)$. It is straightforward that $g \in u+W_0^{1,p}$ and $g(x)\geq \psi(x)$. Since $v$ is a minimizer, according to the definition of $H$ and Assumption 1 (ii), we have
\begin{align*}
\int_{B_t \cap B_{\frac{R}{2}}} H(x,Dv)\dd x
&\leq C \int_{B_t \cap B_{\frac{R}{2}}}\tilde{F}(x,Dv) \dd x\\
&\leq C \int_{B_s \cap  B_{\frac{R}{2}}}\tilde{F}(x,Dg)\dd x.
\end{align*}
Therefore, from the definitions of $g$ and $\eta$, we get
\begin{align*}
&\int_{B_t\cap B_{\frac{R}{2}}} H(x,Dv)\dd x \\
\leq &C\left[ \int_{B_s\cap B_{\frac{R}{2}}}\left(\dfrac{1}{(t-s)^p}|u-v|^p + a(x)\dfrac{1}{(t-s)^q}|u-v|^q\right)\dd x\right.\\
& + \int_{(B_s \setminus B_t)\cap B_{\frac{R}{2}}}H(x,Dv)\dd x\\
& \left. + \ibs H(x,Du)\dd x\right].
\end{align*}
As before, adding the quantity $C\int_{B_t \cap B_{\frac{R}{2}}} H(x,Dv) \dd x$ to both sides of the previous inequality, by Lemma \ref{lm2} we get
\begin{align}\label{riass2}
&\fint_{B_\frac{\rho}{2}\cap B_{\frac{R}{2}}} H(x,Dv)\dd x \notag \\
\leq &C\left[ \fint_{B_\rho\cap B_{\frac{R}{2}}}\left(\dfrac{1}{\rho^p}|u-v|^p + a(x)\dfrac{1}{\rho^q}|u-v|^q\right)\dd x\right. \notag\\
& \left. + \fibrho H(x,Du)\dd x\right].
\end{align}
We set 
\begin{align*}
 \tilde{H}\left(x,\dfrac{u-v}{\rho}\right)
:= \dfrac{1}{\rho^p}|u-v|^p + a(x)\dfrac{1}{\rho^q}|u-v|^q.
\end{align*}
Exploiting the Poincar\'{e} inequality in \cite[Remark 2]{colombo.mingione} and H\"{o}lder's inequality, we have
\begin{align}\nonumber
\fint_{B_\rho \cap B_\frac{R}{2}} \tilde{H}\left(x,\dfrac{u-v}{\rho}\right)\dd x 
\leq &\left(\fint_{B_\rho\cap B_{\frac{R}{2}}} \left(\tilde{H}\left(x,\dfrac{u-v}{\rho}\right)\right)^{d_1}\dd x\right)^\frac{1}{d_1}\\ \label{mingpoincare3}
\leq &\left(\fint_{B_\rho\cap B_{\frac{R}{2}}} \left(H(x,Du-Dv)\right)^{d_2}\dd x\right)^\frac{1}{d_2},
\end{align}
where $d_2<1<d_1$ depend on $n,p,q,\alpha$.
Inserting \eqref{mingpoincare3} in \eqref{riass2}, it yields
\begin{align*}
\fint_{B_\frac{\rho}{2}\cap B_{\frac{R}{2}}} H(x,Dv)\dd x 
\leq &C\left[ \left(\fint_{B_\rho}\left(H(x,Du(x)\right)^{d_2}\dd x\right)^\frac{1}{d_2}\right.\\
&+ \left(\fint_{B_\rho \cap B_\frac{R}{2}}\left(H(x,Dv)\right)^{d_2}\dd x\right)^\frac{1}{d_2}\\
& \left. + \fibrho H(x,Du(x))\dd x\right]\\
\leq &C\left[\left(\fint_{B_\rho \cap B_\frac{R}{2}}\left(H(x,Dv)\right)^{d_2}\dd x\right)^\frac{1}{d_2}
+ \fibrho H(x,Du(x))\dd x\right].
\end{align*}
Therefore, from the definition of $w$ in \eqref{defw}, we infer
\begin{align}\label{estpal}
\fint_{B_\frac{\rho}{2}} H(x,Dw(x))\dd x 
\leq &C\biggl[\left(\fint_{B_\rho}\left(H(x,Dw(x))\right)^{d_2}\dd x\right)^\frac{1}{d_2}
+ \fibrho H(x,Du(x))\dd x \notag\\
&+ \fibrho H(x,D\psi(x))\dd x \biggr].
\end{align}
Hence, by \eqref{cadis} it follows that \eqref{estpal} holds not only if $B_\rho(x_1) \subset B_{\frac{R}{2}}$ or $B_\rho(x_1) \cap B_{\frac{R}{2}} \neq \emptyset$, but also when $B_\rho(x_1) \subset B_R$ and $x_1 \in \partial B_\frac{R}{2}$.

We now take care of the case $B_\rho(x_1) \cap \partial B_\frac{R}{2} \neq \emptyset$ and $B_{4\rho}\subset B_R$.
We fix $x_2 \in B_\rho(x_1) \cap \partial B_\frac{R}{2}$.
\begin{align*}
&\fint_{B_\frac{\rho}{2}(x_1)}H(x,Dw)\dd x\\
 &\leq 3^N \fint_{B_\frac{3\rho}{2}(x_2)}H(x,Dw) \dd x\\
&\leq C\left[ \left(\fint_{B_{3\rho(x_2)}}(H(x,Dw))^{d_2}\dd x \right)^\frac{1}{d_2} + \fint_{B_{3\rho}(x_2)}H(x,Du)\dd x 
+ \fint_{B_{3\rho}(x_2)}H(x,D\psi)\dd x \right]\\
&\leq C \left[\left(\fint_{B_{4\rho(x_1)}}(H(x,Dw))^{d_2}\dd x \right)^\frac{1}{d_2} + \fint_{B_{4\rho}(x_1)}H(x,Du)\dd x 
+ \fint_{B_{4\rho}(x_1)}H(x,D\psi)\dd x \right].
\end{align*}
Since this estimate holds for every $B_\frac{\rho}{2}$ such that $B_{4\rho}\subset B_R$, by a covering argument it follows that inequality \eqref{estpal} holds for every $B_\frac{\rho}{2}$ such that $B_{\rho}\subset B_R$.
Now, since $H(x,D\psi) \in L^{m_1},\ m_1>1$, Theorem \ref{higherint} yields that there exists $m_2$, with $1<m_2<m_1$, s.t. $H(x,Du) \in L^{m_2}$. Therefore, according to Gehring's lemma, there exists $m_3$, with $1<m_3<m_2<m_1$, such that, 
\begin{align*}
&\left(\fint_{B_\frac{\rho}{2}(x_1)} \left(H(x,Dw)\right)^{m_3}\dd x  \right)^\frac{1}{m_3}\\
\leq & C\left[\fint_{B_\rho (x_1)}H(x,Dw(x))\dd x \right.\\
&\left.+ \left(\fint_{B_\rho (x_1)} (H(x,Du(x)))^{m_2}\dd x \right)^\frac{1}{m_2}
+ \left(\int_{B_\rho (x_1)} (H(x,D\psi(x)))^{m_2} \dd x \right)^\frac{1}{m_2}\right].
\end{align*}
In particular, for $\rho \equiv R$ and $x_1 = x_0$, recalling the definition of $w$ we have

\begin{align*}
&\left(\fint_{B_\frac{R}{2}} \left(H(x,Dv)\right)^{m_3}\dd x \right)^\frac{1}{m_3}\\
\leq &C\left[\fint_{B_\frac{R}{2}}H(x,Dv)\dd x 
+ \fint_{B_R\setminus B_\frac{R}{2}}H(x,Du(x))\dd x \right.\\
&+ \left.\left(\fibR (H(x,Du(x)))^{m_2} \dd x \right)^\frac{1}{m_2}
+ \left(\fibR (H(x,D\psi(x)))^{m_2} \dd x \right)^\frac{1}{m_2}\right]\\
\leq &C\left[\fint_{B_\frac{R}{2}}H(x,Dv)\dd x \right.\\
&+ \left.\left(\fibR (H(x,Du(x)))^{m_2} \dd x \right)^\frac{1}{m_2}
+ \left(\fibR (H(x,D\psi(x)))^{m_2} \dd x \right)^\frac{1}{m_2}\right].\\
\end{align*}
Since $v$ is a minimizer and recalling that $m_2>1$, it holds

\begin{align*}
&\left(\fint_{B_\frac{R}{2}} \left(H(x,Dv)\right)^{m_3}\dd x \right)^\frac{1}{m_3}\\
&\leq C\left[\left(\fibR (H(x,Du(x)))^{m_2}\dd x \right)^\frac{1}{m_2}
+ \left(\fibR (H(x,D\psi(x)))^{m_2} \dd x \right)^\frac{1}{m_2}\right],
\end{align*}
i.e. the conclusion.
\end{proof}

\begin{rmk}
We point out that Theorems \ref{higherint} and \ref{higint2} hold true also under the more general hypothesis $q>p>1$. However, they are stated for $q>p\geq 2$ for later purpose in Section \ref{mainsec}.
\end{rmk}

\section{Higher differentiability for comparison maps}\label{hdcm}
The higher differentiability of the solution $v$ to \eqref{frozen} has been already established in \cite{grimaldi.ipocoana} under more general assumptions on the coefficients. The strategy relied on the combination of approximation results and a priori estimates. Here, we only give the proof of the a priori bounds, in order to establish precise estimates on the difference quotient that will be crucial for the comparison argument. On the other hand, the approximation procedure is achieved using the same arguments in \cite{grimaldi.ipocoana}, therefore it will not be presented.  \\
Before stating the result, it is worth noticing that Assumption 1 implies that there exist positive constants $\tilde{l},\tilde{\nu}, \tilde{L}$ such that the following conditions are satisfied:
$$ |D_\xi\tilde{F}(x, \xi)| \leq \tilde{l} (|\xi|^{p-1}+a(x)|\xi|^{q-1}) \eqno{\rm{{ (A1)}}}  $$
$$  \langle D_\xi\tilde{F}(x,\xi)- D_\xi\tilde{F}(x,\eta), \xi-\eta  \rangle \geq \tilde{\nu} (|\xi-\eta|^{2} (|\xi|^{2}+|\eta|^{2})^{\frac{p-2}{2}} +a(x)|\xi-\eta|^{2} (|\xi|^{2}+|\eta|^{2})^{\frac{q-2}{2}}) \eqno{\rm{{ (A2)}}}$$
$$ |D_\xi\tilde{F}(x,\xi)-D_\xi\tilde{F}(x,\eta)|\leq \tilde{L}(|\xi-\eta| (|\xi|^{2}+|\eta|^{2})^{\frac{p-2}{2}}+a(x)|\xi-\eta| (|\xi|^{2}+|\eta|^{2})^{\frac{q-2}{2}})\eqno{\rm{{ (A3)}}}$$
$$ |D_{\xi}\tilde{F}(x, \xi)- D_{\xi}\tilde{F}(y, \xi)| \leq |x-y|^\alpha|\xi|^{q-1}\eqno{\rm{{ (A4)}}}$$
for every $x ,y\in \Omega$ and every $\xi, \eta \in \mathbb{R}^n$.

The following lemma holds:
\begin{lem}\label{highcomp}
Let $v \in u+ W^{1,p}_0(B)$ be the solution to \eqref{frozen} under Assumption 1, for exponents $2 \leq p <  \frac{n}{\alpha}$, $p<q$ satisfying
\begin{equation}\label{gap}
\dfrac{q}{p} < 1 + \dfrac{\alpha}{n}.
\end{equation}
If $$D \psi \in B^{\gamma}_{2q-p,\infty}(B),$$ for $0 < \alpha < \gamma < 1$, then $$V_p(Dv) \in B^{\alpha}_{2,\infty, \text{loc}}(B)$$ and the following estimate
\begin{align}\label{diffquotient}
&  \displaystyle\int_{B_{r/4}}(|\tau_{h}V_p(Dv)|^{2}+a(x+h)|\tau_{h}V_q(Dv)|^{2})\dd x \notag\\
 \leq & C |h|^{2\alpha} \biggl\{ \dfrac{1}{r^{2 \tilde{p}}} \biggl(\displaystyle\int_{B_r}  (1+|Dv|^p+|D\psi|^{2q-p})\dd x \biggr)^\kappa+ [ D \psi ]^{2q-p}_{B^{\gamma}_{2q-p, \infty}(B_r)} \biggl\},
\end{align}
holds for all balls $B_{r/4} \subset B_r \Subset B$, for some $\mu < \alpha$, with $C := C(n,p,q, \mu, \Vert a \Vert_{\infty},\Vert D\psi \Vert_{B^{\gamma}_{2q-p, \infty}})$, $\tilde{p}:=\tilde{p}(n,p,q,\mu)>1$ and  $\kappa:= \kappa (n,p,q, \mu)<\tilde{p}$.
\end{lem}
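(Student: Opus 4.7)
The plan is to derive the a priori difference-quotient estimate directly from the variational inequality \eqref{varin} by testing with a perturbation built from $\tau_h(v-\psi)$ damped by a radial cutoff. Pick $\eta\in C_c^\infty(B_{r/2})$ with $\eta\equiv 1$ on $B_{r/4}$ and $|D\eta|\leq 8/r$, and set
$$ \varphi = v - \eta^{2}\,\tau_{-h}\bigl(\eta^{2}\tau_{h}(v-\psi)\bigr). $$
For $|h|$ smaller than $\mathrm{dist}(\mathrm{supp}\,\eta,\partial B)$, the perturbation $\varphi-v$ vanishes outside $B_{r/2}$, and one checks pointwise, using $v\geq \psi$, that $\varphi\geq \psi$; hence $\varphi\in \mathcal{K}_\psi(B)\cap (u+W_0^{1,p}(B))$ is admissible in \eqref{varin}. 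Transferring one $\tau_{-h}$ across the pairing via Proposition \ref{rappincr}(ii) and splitting
\begin{align*}
\tau_h D_\xi H(x,Dv) =\ &\bigl[D_\xi H(x+h,Dv(x+h)) - D_\xi H(x+h,Dv(x))\bigr] \\
&+ \bigl[D_\xi H(x+h,Dv(x)) - D_\xi H(x,Dv(x))\bigr]
\end{align*}
separates the ellipticity contribution from the $x$-regularity one, so that the inequality rearranges into an ellipticity integral on the LHS and coefficient, cutoff, and obstacle contributions on the RHS.

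By the monotonicity condition (A2) together with Lemma \ref{D1}, the LHS dominates
$$ c\int_{B_{r/4}}\bigl(|\tau_h V_p(Dv)|^{2}+a(x+h)|\tau_h V_q(Dv)|^{2}\bigr)\dd x. $$
On the RHS, the $x$-increment term is estimated via (A4), contributing after Young's inequality a small multiple of the LHS plus a term of order $|h|^{2\alpha}\int \eta^{4} |Dv|^{2(q-1)}\dd x$; the cutoff contribution, handled through (A3) and Young, generates terms of order $r^{-2}\int H(x,Dv)\dd x$ plus $r^{-2}\int|\tau_h(v-\psi)|^{2}\dd x$, the latter bounded by $C|h|^{2} r^{-2}\int(|Dv|^{2}+|D\psi|^{2})\dd x$ through Lemma \ref{ldiff}; finally, the obstacle pieces produce $|h|^{2\alpha}\int|\tau_h D\psi|^{2q-p}\dd x$ integrals, dominated by $|h|^{2\alpha}[D\psi]^{2q-p}_{B^{\gamma}_{2q-p,\infty}(B_r)}$ since $\gamma>\alpha$ and so $|h|^{2\gamma}\leq |h|^{2\alpha}$.

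The main obstacle is absorbing the super-$p$ integrand $|Dv|^{2(q-1)}$, which is not a priori $p$-integrable. This is where the gap condition \eqref{gap} becomes decisive: combined with the up-to-the-boundary higher integrability of Theorem \ref{higint2} (applicable because $D\psi\in B^{\gamma}_{2q-p,\infty}$ embeds into $L^{s}_{\text{loc}}$ for some $s>2q-p$ by Lemma \ref{besovembed}, so $H(x,D\psi)\in L^{m_1}_{\text{loc}}$ for a suitable $m_1>1$) and the Sobolev--Besov embedding of Lemma \ref{besovembed} applied to $V_p(Dv)$ at a slightly smaller fractional scale $\mu<\alpha$, one bounds $\int_{B_{r/2}}|Dv|^{2(q-1)}\dd x$ by a power $\kappa<\tilde p$ of $\int_{B_r}(1+|Dv|^{p}+|D\psi|^{2q-p})\dd x$, with a loss $r^{-2\tilde p}$, the exponents $\kappa$ and $\tilde p$ being dictated by the gap $\alpha/n>q/p-1$. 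Collecting all contributions, reabsorbing the small multiples of the LHS, and iterating on the concentric balls $B_{r/4}\subset B_{r/2}\subset B_r$ via Lemma \ref{lm2} to eliminate the cutoff weights yield estimate \eqref{diffquotient}; dividing by $|h|^{2\alpha}$ and taking the supremum over small $h$ then gives $V_p(Dv)\in B^{\alpha}_{2,\infty,\text{loc}}(B)$, as claimed.
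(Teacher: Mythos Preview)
Your outline has the right architecture but contains two genuine gaps.

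\textbf{Admissibility of the test function.} The competitor $\varphi = v - \eta^{2}\tau_{-h}\bigl(\eta^{2}\tau_{h}(v-\psi)\bigr)$ does \emph{not} satisfy $\varphi\geq\psi$ in general. Expanding, one finds
\[
\varphi-\psi = \bigl[1-\eta^{2}(x)\eta^{2}(x-h)-\eta^{4}(x)\bigr](v-\psi)(x) + (\text{nonnegative terms}),
\]
and on the set $\{\eta(x)=\eta(x-h)=1\}$ the bracket equals $-1$. The device that actually works for unilateral problems is the pair of \emph{first-order} competitors $\varphi_1=v+\eta^{2}\tau_h(v-\psi)$ and $\varphi_2=v+\eta^{2}(\cdot-h)\tau_{-h}(v-\psi)$: each is a convex combination of translates of $v-\psi$, hence admissible, and adding the two resulting inequalities (after a change of variable in the second) produces the decomposition $I_1,\ldots,I_6$ used in the paper. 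Your single second-difference test function cannot be substituted here.

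\textbf{The absorption of the super-$p$ term.} The step ``one bounds $\int_{B_{r/2}}|Dv|^{2(q-1)}\dd x$ by a power $\kappa$ of $\int_{B_r}(1+|Dv|^{p}+|D\psi|^{2q-p})\dd x$ via Theorem \ref{higint2} and the Besov embedding for $V_p(Dv)$'' is circular as written: the Besov embedding for $V_p(Dv)$ requires precisely the quantity you are trying to estimate, and Theorem \ref{higint2} only yields $H(x,Dv)\in L^{m_3}$ with $m_3>1$ close to $1$, which does not control $|Dv|^{2q-p}$ (let alone $|Dv|^{2(q-1)}$, which is the larger exponent your unweighted Young inequality produces; using the weight $(|Dv(x)|^2+|Dv(x+h)|^2)^{(p-2)/2}$ in Young gives the correct $2q-p$). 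The mechanism in the paper is different: one \emph{a priori} assumes $Dv\in L^{np/(n-2\mu)}_{\text{loc}}$ for some $\mu<\alpha$ (justified separately by approximation), interpolates $\|Dv\|_{2q-p}$ between $\|Dv\|_p$ and $\|Dv\|_{np/(n-2\mu)}$, and uses the gap \eqref{gap} to ensure the interpolation exponent on the higher norm is strictly below $1$. After Young's inequality this leaves a \emph{small} multiple $\theta\bigl(\int_{B_{t'}}|Dv|^{np/(n-2\mu)}\bigr)^{(n-2\mu)/n}$ on the right-hand side; the Besov--Sobolev embedding (Lemma \ref{besovembed}) then shows this is dominated by the left-hand side on a slightly larger ball, and Lemma \ref{lm2} is what reabsorbs this term, not merely the cutoff weights. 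Without this interpolate--then--iterate loop the estimate does not close.
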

\proof We a priori assume that $Dv \in L^{\frac{np}{n-2 \mu}}_{\text{loc}}(B)$, for all $ \frac{\alpha n}{n+2\alpha }< \mu < \alpha$. 
\\In the sequel we will profusely use the following inequality:
\begin{equation}\label{dis}
2q-p \leq \dfrac{np}{n-2 \mu} ,
\end{equation}
for $\mu \in [\frac{\alpha n}{n+2\alpha }, \alpha)$. Indeed,
$$2q-p \leq \dfrac{np}{n-2 \mu} \Leftrightarrow \dfrac{q}{p} \leq \dfrac{n-\mu}{n-2 \mu}$$
and $$1+ \dfrac{\alpha}{n} \leq \dfrac{n-\mu}{n-2 \mu} \Leftrightarrow \mu \geq \dfrac{\alpha n}{n+2\alpha }.$$
Fix $0 < \frac{r}{4} < \rho < s < t < t' < \frac{r}{2} $ such that $B_{r} \Subset B$ and a cut-off function $\eta \in \mathcal{C}_0^1(B_t)$ such that $0 \leq \eta \leq 1$, $\eta =1$ on $B_s$, $|D \eta | \leq \frac{C}{t-s}$.
\\Now, for $|h| < \frac{r}{4}$, we consider functions
\begin{gather}
w_{1}(x)= \eta^{2}(x) [(v-\psi)(x+h)-(v-\psi)(x)] \notag 
\end{gather}
and
\begin{gather}
w_{2}(x)= \eta^{2}(x-h) [(v-\psi)(x-h)-(v-\psi)(x)]. \notag 
\end{gather}
Then
\begin{equation}
\varphi_1(x)=v(x)+tw_1(x), \label{2:2} 
\end{equation}
\begin{equation}
\varphi_2(x)=v(x)+tw_2(x) \label{2:3}
\end{equation}
are admissible test functions for all $t \in [0,1)$.
\\Arguing analogously as in the proof of \cite[Theorem 4.1]{grimaldi.ipocoana}, we obtain the following estimate
\begin{align}
0 \geq & \displaystyle\int_{\Omega} \langle D_\xi H(x+h,Dv(x+h))-D_\xi H(x+h,Dv(x)),\eta^{2}D\tau_{h}v \rangle \dd x\notag\\
  &-\displaystyle\int_{\Omega} \langle D_\xi H(x+h,Dv(x+h))-D_\xi H(x+h,Dv(x)),\eta^{2}D\tau_{h}\psi \rangle \dd x\notag\\
  &+\displaystyle\int_{\Omega} \langle D_\xi H(x+h,Dv(x+h))-D_\xi H(x+h,Dv(x)),2\eta D \eta\tau_{h}(v-\psi) \rangle \dd x\notag\\
  &+\displaystyle\int_{\Omega} \langle D_\xi H(x+h,Dv(x))-D_\xi H(x,Dv(x)),\eta^{2}D\tau_{h}v \rangle \dd x\notag\\
  &-\displaystyle\int_{\Omega} \langle D_\xi H(x+h,Dv(x))-D_\xi H(x,Dv(x)),\eta^{2}D\tau_{h}\psi \rangle \dd x\notag\\
  &+\displaystyle\int_{\Omega} \langle D_\xi H(x+h,Dv(x))-D_\xi H(x,Dv(x)),2\eta D \eta\tau_{h}(v-\psi) \rangle \dd x \notag\\
 =:& I_{1}+I_{2}+I_{3}+I_{4}+I_{5}+I_{6}, \label{2:7}
\end{align}
that yields
\begin{align}
I_1 \leq & |I_2| + |I_3| + |I_4| + |I_5| + |I_6|. \label{2:8}
\end{align}
The ellipticity assumption (A2) and the properties of $a(x)$ imply 
\begin{align}
I_{1} \geq & \tilde{\nu} \displaystyle\int_{\Omega}  \eta^{2} |\tau_{h}Dv|^{2}( |Dv(x+h)|^{2}+|Dv(x)|^{2})^{\frac{p-2}{2}} \dd x \notag\\
& + \tilde{\nu} \displaystyle\int_{\Omega}  \eta^{2} a(x+h) |\tau_{h}Dv|^{2}( |Dv(x+h)|^{2}+|Dv(x)|^{2})^{\frac{q-2}{2}} \dd x \notag\\
\geq & \tilde{\nu} \displaystyle\int_{\Omega}  \eta^{2} (|\tau_{h}V_p(Dv)|^{2}+a(x+h)|\tau_{h}V_q(Dv)|^{2})\dd x.
\label{I1}
\end{align}
From the growth condition (A3), the boundedness of $a(x)$ and Young's inequality, we get
\begin{align}
|I_{2}| \leq & \tilde{L} \displaystyle\int_{\Omega} \eta^{2} |\tau_{h}Dv|( |Dv(x+h)|^{2}+|Dv(x)|^{2})^{\frac{p-2}{2}}|\tau_{h}D \psi| \dd x \notag \\
& + \tilde{L} \displaystyle\int_{\Omega} \eta^{2} a(x+h) |\tau_{h}Dv|( |Dv(x+h)|^{2}+|Dv(x)|^{2})^{\frac{q-2}{2}}|\tau_{h}D \psi| \dd x \notag \\
\leq & \tilde{L} \displaystyle\int_{\Omega} \eta^{2} |\tau_{h}Dv|( |Dv(x+h)|^{2}+|Dv(x)|^{2})^{\frac{p-2}{2}}|\tau_{h}D \psi| \dd x \notag \\
& + \tilde{L} \Vert a \Vert_{\infty} \displaystyle\int_{\Omega} \eta^{2}|\tau_{h}Dv|( |Dv(x+h)|^{2}+|Dv(x)|^{2})^{\frac{q-2}{2}}|\tau_{h}D \psi| \dd x\notag \\
 \leq & \varepsilon \displaystyle\int_{\Omega} \eta^2 |\tau_hDv|^2 (|Dv(x+h)|^2+|Dv(x)|^2)^{\frac{p-2}{2}}\dd x \notag\\
&+ C_{\varepsilon}(\tilde{L}, \Vert a \Vert_{\infty}) \displaystyle\int_{\Omega} \eta^2 |\tau_h D \psi |^2 (1+|Dv(x+h)|^2+|Dv(x)|^2)^{\frac{2q-p-2}{2}}\dd x. \notag
\end{align}
The calculations performed in \cite[Theorem 4.1]{grimaldi.ipocoana} and Lemma \ref{D1} lead us to the following estimate for the integral $I_{2}$
\begin{align}
|I_{2}|\leq &  \varepsilon \displaystyle\int_{\Omega} \eta^2 |\tau_hV_p(Dv)|^2 \dd x \notag\\
&+ C_{\varepsilon}(\tilde{L},p,q, \Vert a \Vert_{\infty})|h|^{2\gamma}[D \psi]^{2q-p}_{B^{\gamma}_{2q-p, \infty}(B_r)} \notag\\
&+  C_{\varepsilon}(\tilde{L},p,q, \Vert a \Vert_{\infty})|h|^{2\gamma} \displaystyle\int_{B_{t'}}(1+|Dv|)^{2q-p}\dd x.\label{I2}
\end{align}
Now, we consider the integral $I_3$. From assumption (A3), hypothesis $|D\eta| \leq \frac{C}{t-s}$ and Young's inequality, we get
\begin{align}
|I_{3}|\
\leq &  2\tilde{L} \displaystyle\int_{\Omega} |D \eta| \eta |\tau_{h} Dv| ( |Dv(x+h)|^{2}+|Dv(x)|^{2})^{\frac{p-2}{2}}|\tau_{h}(v-\psi)| \dd x \notag\\
&+  2\tilde{L} \Vert a \Vert_{\infty} \displaystyle\int_{\Omega} |D \eta| \eta|\tau_{h} Dv| (1+ |Dv(x+h)|^{2}+|Dv(x)|^{2})^{\frac{q-2}{2}}|\tau_{h}(v-\psi)| \dd x \notag\\
\leq &  \varepsilon \displaystyle\int_{\Omega} \eta^2 |\tau_hDv|^2 (|Dv(x+h)|^2+|Dv(x)|^2)^{\frac{p-2}{2}}\dd x \notag\\
&+ \dfrac{C_{\varepsilon}(L, \Vert a \Vert_{\infty})}{(t-s)^2} \displaystyle\int_{B_t} |\tau_h(v- \psi)|^2(|Dv(x+h)|^2+|Dv(x)|^2)^{\frac{2q-p-2}{2}}\dd x, \notag
\end{align}
where we also used the boundedness of function $a(x)$.\\
Arguing analogously as in the proof of \cite[Theorem 4.1]{grimaldi.ipocoana}, we can estimate the integral $I_{3}$ as follows
\begin{align}
 |I_{3}| \leq &\varepsilon \displaystyle\int_{\Omega} \eta^2 |\tau_hV_p(Dv)|^2\dd x \notag\\
&+\dfrac{C_{\varepsilon}(\tilde{L},n,p,q, \Vert a \Vert_{\infty})}{(t-s)^2}|h|^2 \displaystyle\int_{B_r}|D \psi|^{2q-p}\dd x  \notag\\
&+\dfrac{C_{\varepsilon}(\tilde{L},n,p,q, \Vert a \Vert_{\infty})}{(t-s)^2} |h|^2 \displaystyle\int_{B_{t'}}(1+|Dv|)^{2q-p}\dd x . \label{I3}
\end{align}
In order to estimate the integral $I_4$, we use assumption (A4), Young's inequality and Lemma \ref{D1} as follows
\begin{align}
|I_4| \leq & \displaystyle\int_{\Omega} \eta^2 |\tau_h Dv| |h|^{\alpha} |Dv|^{q-1}\dd x \notag\\
\leq & \varepsilon \displaystyle\int_{\Omega} \eta^2 |\tau_hDv|^2 (|Dv(x+h)|^2+|Dv(x)|^2)^{\frac{p-2}{2}}\dd x \notag\\
&+ C_{\varepsilon}|h|^{2 \alpha} \displaystyle\int_{B_t} |Dv|^{2q-p}\dd x\notag\\
\leq & \varepsilon \displaystyle\int_{\Omega} \eta^2 |\tau_hV_p(Dv)|^2 \dd x \notag\\
& + C_{\varepsilon}|h|^{2 \alpha} \displaystyle\int_{B_t} |Dv|^{2q-p}\dd x . \label{I4}
\end{align}
We now take care of $I_5$. Similarly as above, exploiting assumption (A4) and H\"{o}lder's inequality, we infer
\begin{align*}
|I_5| 
\leq & \io \eta^2 |\tau_h D\psi| |h|^\alpha |Dv|^{q-1} \dd x\\
\leq & |h|^\alpha \left( \ibt |\tau_h D\psi|^{2q-p} \dd x \right)
^\frac{1}{2q-p}\left( \ibt |Dv|^\frac{(q-1)(2q-p)}{2q-p-1} \dd x \right)^\frac{2q-p-1}{2q-p}.
\end{align*}
Now, we observe
\begin{align}\label{disI5}
\frac{(q-1)(2q-p)}{2q-p-1} < 2q-p \Leftrightarrow p < q .
\end{align}
Hence
\begin{align}\label{I5}
|I_5| \leq & |h|^{\alpha+\gamma}  [D \psi]_{B^{\gamma}_{2q-p, \infty}(B_r)}\left( \ibt|Dv|^{2q-p} \dd x \right)^\frac{q-1}{2q-p} \notag\\
\leq &  C(q)|h|^{\alpha+\gamma}  [D \psi]^q_{B^{\gamma}_{2q-p, \infty}(B_r)} \notag\\
& + C(q) |h|^{\alpha+\gamma} \left( \ibt|Dv|^{2q-p} \dd x \right)^\frac{q}{2q-p}.
\end{align}
From assumption (A4), hypothesis $|D\eta| \leq \frac{C}{t-s}$ and H\"{o}lder's inequality, we infer the following estimate for $I_6$.

\begin{align*}
|I_6| \leq &\frac{C}{t-s} |h|^\alpha \ibt |\tau_h \psi| |Dv|^{q-1} \dd x\\
&+\frac{C}{t-s} |h|^\alpha \ibt |\tau_h v| |Dv|^{q-1} \dd x\\
\leq &\frac{C}{t-s} |h|^\alpha  \left(\ibt |\tau_h \psi|^{2q-p} \dd x\right)^\frac{1}{2q-p} \left( \ibt |Dv|^\frac{(q-1)(2q-p)}{2q-p-1}\dd x \right)^\frac{2q-p-1}{2q-p}\\
&+\frac{C}{t-s} |h|^\alpha  \left(\ibt |\tau_h v|^{2q-p} \dd x\right)^\frac{1}{2q-p} \left( \ibt |Dv|^\frac{(q-1)(2q-p)}{2q-p-1}\dd x \right)^\frac{2q-p-1}{2q-p}.
\end{align*}
Using Lemma \ref{ldiff}, \eqref{disI5} and H\"{o}lder's  and Young's inequality, we have
\begin{align}
|I_6| \leq &\frac{C(n,p,q)}{t-s} |h|^{\alpha +1} \left(\ibtt |D\psi|^{2q-p} \dd x\right)^\frac{1}{2q-p} \left( \ibt |Dv|^\frac{(q-1)(2q-p)}{2q-p-1}\dd x \right)^\frac{2q-p-1}{2q-p} \notag\\
&+\frac{C(n,p,q)}{t-s} |h|^{\alpha +1} \left(\ibtt |Dv|^{2q-p} \dd x\right)^\frac{1}{2q-p} \left( \ibt |Dv|^\frac{(q-1)(2q-p)}{2q-p-1}\dd x \right)^\frac{2q-p-1}{2q-p} \notag\\
\leq &\frac{C(n,p,q)}{t-s} |h|^{\alpha +1} \left(\displaystyle\int_{B_r} |D\psi|^{2q-p} \dd x\right)^\frac{1}{2q-p} \left( \ibt |Dv|^{2q-p}\dd x \right)^\frac{q-1}{2q-p} \notag\\
&+\frac{C(n,p,q)}{t-s} |h|^{\alpha +1} \left(\ibtt |Dv|^{2q-p} \dd x\right)^\frac{q}{2q-p} \notag\\
\leq & \frac{C(n,p,q)}{t-s} |h|^{\alpha +1} \left(\displaystyle\int_{B_r} |D\psi|^{2q-p} \dd x\right)^\frac{q}{2q-p}  \notag\\
& +\frac{C(n,p,q)}{t-s} |h|^{\alpha +1} \left(\ibtt |Dv|^{2q-p} \dd x\right)^\frac{q}{2q-p} . \label{I6}
\end{align}
Inserting estimates \eqref{I1}, \eqref{I2}, \eqref{I3}, \eqref{I4}, \eqref{I5} and \eqref{I6} in \eqref{2:8}, we infer
\begin{align}\label{estimate1.}
& \nu  \displaystyle\int_{\Omega}  \eta^{2} (|\tau_{h}V_p(Dv)|^{2} 
+  a(x+h)|\tau_{h}V_p(Dv)|^{2}) \dd x\notag\\
\leq & 3\varepsilon \displaystyle\int_{\Omega} \eta^2 |\tau_hV_p(Dv)|^2 \dd x \notag\\
&+ C_{\varepsilon}(\tilde{L},p,q, \Vert a \Vert_{\infty})|h|^{2\gamma}[D \psi]^{2q-p}_{B^{\gamma}_{2q-p, \infty}(B_r)} \notag\\
&+  C_{\varepsilon}(\tilde{L},p,q, \Vert a \Vert_{\infty})|h|^{2\gamma} \displaystyle\int_{B_{t'}}(1+|Dv|)^{2q-p}\dd x \notag \\
&+\dfrac{C_{\varepsilon}(\tilde{L},n,p,q, \Vert a \Vert_{\infty})}{(t-s)^2}|h|^2 \displaystyle\int_{B_r}|D \psi|^{2q-p}\dd x  \notag\\
&+\dfrac{C_{\varepsilon}(\tilde{L},n,p,q, \Vert a \Vert_{\infty})}{(t-s)^2} |h|^2 \displaystyle\int_{B_{t'}}(1+|Dv|)^{2q-p}\dd x \notag\\
&+ C_{\varepsilon}|h|^{2 \alpha} \displaystyle\int_{B_t} |Dv|^{2q-p}\dd x \notag\\ 
&+C(q)|h|^{\alpha+\gamma}  [D \psi]^q_{B^{\gamma}_{2q-p, \infty}(B_r)}+C(q)|h|^{\alpha+\gamma}\left( \ibt|Dv|^{2q-p} \dd x \right)^\frac{q}{2q-p}\notag\\
&+\frac{C(n,p,q)}{t-s} |h|^{\alpha +1} \left(\displaystyle\int_{B_r} |D\psi|^{2q-p} \dd x\right)^\frac{q}{2q-p} \notag\\
&+\frac{C(n,p,q)}{t-s} |h|^{\alpha +1} \left(\ibtt |Dv|^{2q-p} \dd x\right)^\frac{q}{2q-p}.
\end{align}
We now introduce the following interpolation inequality
\begin{align}\label{interp1}
\Vert Dw \Vert_{2q-p} \leq \Vert Dw \Vert_p^{\delta} \Vert Dw \Vert_{\frac{np}{n-2 \mu}}^{1- \delta},
\end{align}
where $0< \delta <1$ is defined through the condition
\begin{align}\label{defdelta1}
\dfrac{1}{(2q-p)}= \dfrac{\delta}{p}+ \dfrac{(1-\delta)(n-2 \mu)}{np}
\end{align}
which implies
$$\delta= \dfrac{n(p-q)+\mu (2q-p)}{\mu (2q-p)}, \ \ \ 1- \delta= \dfrac{n(q-p)}{\mu (2q-p)}.$$
Hence we get the following inequalities
\begin{align}\label{interpol}
\displaystyle\int_{B_{t'}}(1+|Dv|)^{2q-p}\dd x \leq & \biggl( \displaystyle\int_{B_{t'}} (1+|Dv|)^{p}\dd x \biggr)^{\frac{\delta(2q-p)}{p}}   \biggl( \displaystyle\int_{B_{t'}}(1+|Dv|)^{\frac{np}{n-2 \mu}}\dd x \biggr)^{\frac{(n-2 \mu)(q-p)}{\mu p}},\\
\biggr(\displaystyle\int_{B_{t'}}|Dv|^{2q-p}\dd x \biggl)^{\frac{q}{2q-p}} \leq & \biggl( \displaystyle\int_{B_{t'}} |Dv|^{p}\dd x \biggr)^{\frac{\delta q}{p}}  \cdot  \biggl( \displaystyle\int_{B_{t'}}|Dv|^{\frac{np}{n-2 \mu}}\dd x \biggr)^{\frac{(n-2 \mu)qp'}{ p}}, \label{int6}
\end{align}
where $p' = \frac{q-p}{\mu (2q-p)}$.

Inserting \eqref{interpol} and \eqref{int6} in \eqref{estimate1.}, and exploiting the bounds 
\begin{gather}\label{ineq.1}
\dfrac{n(q-p)}{\mu p} <1,  \quad \dfrac{nq(q-p)}{\mu  p(2q-p)} <1,
\end{gather}
which hold by assumption \eqref{gap} and for $\mu \in (\frac{n(q-p)}{p}, \alpha)$, from Young's inequality, we infer
\begin{align}\nonumber
 & \nu  \displaystyle\int_{\Omega}  \eta^{2} (|\tau_{h}V_p(Dv)|^{2} 
+  a(x+h)|\tau_{h}V_p(Dv)|^{2}) \dd x \notag\\
\leq & 3\varepsilon \displaystyle\int_{\Omega} \eta^2 |\tau_hV_p(Dv)|^2 \dd x \notag\\
&+ C_{\varepsilon}(\tilde{L},p,q, \Vert a \Vert_{\infty})|h|^{2\gamma}[D \psi]^{2q-p}_{B^{\gamma}_{2q-p, \infty}(B_{r})}\notag\\
&+ C_{\varepsilon, \theta}(\tilde{L},n,p,q, \Vert a \Vert_{\infty})|h|^{2 \gamma} \biggl( \displaystyle\int_{B_r}(1+|Dv|)^{p}\dd x \biggr)^{\frac{\delta(2q-p)\tilde{p}}{p}} \notag\\
&+ \theta |h|^{2 \gamma}\biggl( \displaystyle\int_{B_{t'}}(1+|Dv|)^{\frac{np}{n-2 \mu}}\dd x \biggr)^{\frac{n-2 \mu}{n}}\notag\\
&+\dfrac{C_{\varepsilon}(\tilde{L},n,p,q, \Vert a \Vert_{\infty})}{(t-s)^2}|h|^2  \displaystyle\int_{B_r}|D \psi|^{2q-p}\dd x  \notag\\
&+ \theta |h|^2 \biggl( \displaystyle\int_{B_{t'}}(1+|Dv|)^{\frac{np}{n-2 \mu}}\dd x \biggr)^{\frac{n-2 \mu}{n}} +\dfrac{C_{\varepsilon, \theta}(\tilde{L},n,p,q, \Vert a \Vert_{\infty})}{(t-s)^{2\tilde{p}}} |h|^2 \biggl( \displaystyle\int_{B_r}(1+|Dv|)^{p}\dd x \biggr)^{\frac{\tilde{p}\delta(2q-p)}{p}} \notag\\
&+ C_{\varepsilon, \theta}|h|^{2 \alpha} \biggl( \displaystyle\int_{B_r}|Dv|^{p}\dd x \biggr)^{\frac{\tilde{p}\delta (2q-p)}{p}}+ \theta |h|^{2 \alpha} \biggl( \displaystyle\int_{B_{t}}|Dv|^{\frac{np}{n-2 \mu}}\dd x \biggr)^{\frac{n-2 \mu}{n}}\notag\\
&+ C_{\theta}(q)|h|^{\alpha+\gamma}  [D \psi]^{q}_{B^{\gamma}_{2q-p, \infty}(B_{r})}+ C_{\theta}(n,p,q)|h|^{\alpha+\gamma} \left( \displaystyle\int_{B_r}|Dv|^p \dd x \right)^\frac{p^*\delta q}{p} \notag\\
&+\theta |h|^{\alpha+\gamma}\left( \ibt|Dv|^\frac{np}{n-2\mu} \dd x \right)^\frac{n-2\mu}{n} \notag\\
&+ \frac{C_\theta(n,p,q)}{t-s}|h|^{\alpha +1} \left(\displaystyle\int_{B_r} |D\psi|^{2q-p} \dd x\right)^\frac{q}{2q-p}  \nonumber \\
&+\frac{C_\theta(n,p,q)}{(t-s)^{p^*}} |h|^{\alpha+1}  \left( \displaystyle\int_{B_r}|Dv|^{p} \dd x\right)^{\frac{p^* \delta q}{p}} +\theta |h|^{\alpha+1} \left( \ibtt |Dv|^\frac{np}{n-2\mu} \dd x \right)^\frac{n-2\mu}{n}.\label{exp}
\end{align}
for some constant $\theta \in (0,1)$, where we set $\tilde{p}= \frac{\mu p }{\mu p -n(q-p)}$, $p^* = \frac{\mu p(2q-p)}{\mu p(2q-p)  -n(q-p)q}$.

For a better readability we now define
\begin{align*}
A=  & C_{\varepsilon}(\tilde{L},p,q, \Vert a \Vert_{\infty})[D \psi]^{2q-p}_{B^{\gamma}_{2q-p, \infty}(B_{r})}+ C_{\varepsilon, \theta}(\tilde{L},n,p,q, \Vert a \Vert_{\infty})\biggl( \displaystyle\int_{B_r} (1+|Dv|)^{p}\dd x \biggr)^{\frac{\delta(2q-p)\tilde{p}}{p}} \notag\\
&+ C_{\varepsilon, \theta}  \biggl( \displaystyle\int_{B_r}|Dv|^{p}\dd x \biggr)^{\frac{\tilde{p}\delta (2q-p)}{p}} \notag \\
&+ C_{\theta}(q)|h|^{\alpha+\gamma}  [D \psi]^{q}_{B^{\gamma}_{2q-p, \infty}(B_{r})}+ C_{\theta}(n,p,q)|h|^{\alpha+\gamma} \left( \displaystyle\int_{B_r}|Dv|^p \dd x \right)^\frac{p^*\delta q}{p}  \notag\\
B_1=& C_{\varepsilon}(\tilde{L},n,p,q, \Vert a \Vert_{\infty}) \displaystyle\int_{B_r}|D \psi|^{2q-p}\dd x ,\\
B_2= &C_{\varepsilon, \theta}(\tilde{L},n,p,q, \Vert a \Vert_{\infty})  \biggl( \displaystyle\int_{B_r}(1+|Dv|)^{p}\dd x \biggr)^{\frac{\tilde{p}\delta(2q-p)}{p}} ,\\
B_3 = &C_\theta(n,p,q) \left(\displaystyle\int_{B_r} |D\psi|^{2q-p} \dd x\right)^\frac{q}{2q-p} ,  \\
B_4 = & C_\theta(n,p,q)  \left( \displaystyle\int_{B_r} |Dv|^{p} \dd x\right)^{\frac{p^* \delta q}{p}}, 
\end{align*}
so that we can rewrite the previous estimate as
\begin{align}
&\nu  \displaystyle\int_{\Omega}  \eta^{2} (|\tau_{h}V_p(Dv)|^{2}  
+  a(x+h)|\tau_{h}V_p(Dv)|^{2}) \dd x \notag\\
\leq & 3\varepsilon \displaystyle\int_{\Omega} \eta^2 |\tau_hV_p(Dv)|^2 \dd x\notag\\
 &+  \theta (|h|^{2 \alpha}+|h|^{\alpha + \gamma} ) \left( \ibt (1+|Dv|)^\frac{np}{n-2\mu} \dd x \right)^\frac{n-2\mu}{n} \notag\\
 &+ \theta (|h|^2+|h|^{2 \gamma}+|h|^{\alpha +1} ) \left( \ibtt (1+|Dv|)^\frac{np}{n-2\mu} \dd x \right)^\frac{n-2\mu}{n} \notag\\
 &+ (|h|^{2 \gamma}+|h|^{2 \alpha}+|h|^{\alpha + \gamma})A + |h|^2\dfrac{B_1}{(t-s)^2}+ |h|^2\dfrac{B_2}{(t-s)^{2\tilde{p}}} \notag\\
 & + |h|^{\alpha+1}\dfrac{B_3}{(t-s)^{p^{''}}}+ |h|^{\alpha +1}\dfrac{B_4}{(t-s)^{p^*}} .\notag
\end{align}
Choosing $\varepsilon= \frac{\nu}{6}$, we can reabsorb the first integral in the right hand side of the previous estimate by the left hand side, thus getting
\begin{align}
 & \displaystyle\int_{\Omega}  \eta^{2}  (|\tau_{h}V_p(Dv)|^{2}+a(x+h)|\tau_{h}V_q(Dv)|^{2}) \dd x \notag\\
 \leq & 3 \theta |h|^{2 \alpha} \left( \ibt (1+|Dv|)^\frac{np}{n-2\mu} \dd x \right)^\frac{n-2\mu}{n} + 3\theta |h|^{2 \alpha} \left( \ibtt (1+|Dv|)^\frac{np}{n-2\mu} \dd x \right)^\frac{n-2\mu}{n} \notag\\
 &+ |h|^{2 \alpha}A + |h|^2\dfrac{B_1}{(t-s)^2}+ |h|^2\dfrac{B_2}{(t-s)^{2\tilde{p}}}  + |h|^{2\alpha}\dfrac{B_3}{t-s}+ |h|^{2\alpha }\dfrac{B_4}{(t-s)^{p^*}} , \label{formula}
\end{align}
where we used the fact that $\alpha < \gamma$. 

Since the right hand side of \eqref{formula} depends on the integrability of $Dv$, in order to exploit inequality \eqref{diffquotient}, we need to derive an a priori estimate for the gradient of the minimzer $v$. First, we bound \eqref{formula} from below as follows
\begin{align}\label{tauVp}
& \displaystyle\int_{B_s} |\tau_h V_p(Dv)|^2 \dd x \notag\\
 \leq & \displaystyle\int_{B_s} (|\tau_{h}V_p(Dv)|^{2}+a(x+h)|\tau_{h}V_q(Dv)|^{2})\dd x \notag\\
 \leq  &|h|^{2 \alpha} \biggl\{  2 \theta  \left( \ibt (1+|Dv|)^\frac{np}{n-2\mu} \dd x \right)^\frac{n-2\mu}{n} + 3\theta  \left( \ibtt (1+|Dv|)^\frac{np}{n-2\mu} \dd x \right)^\frac{n-2\mu}{n}  \notag\\
 &+ A + \dfrac{B_1}{(t-s)^2}+ \dfrac{B_2}{(t-s)^{2\tilde{p}}}  + \dfrac{B_3}{t-s}+ \dfrac{B_4}{(t-s)^{p^*}} \biggr\}, 
\end{align} 
where we also used that $\eta = 1$ on $B_s$. Then, Lemma \ref{besovembed} and equality \eqref{Vp} imply
\begin{align}
\biggl( \displaystyle\int_{B_s}  |Dv|^{\frac{np}{n- 2 \mu }} \dd x \biggr)^{\frac{n- 2 \mu}{n}} \leq & 2 \theta  \left( \ibt (1+|Dv|)^\frac{np}{n-2\mu} \dd x \right)^\frac{n-2\mu}{n} + 3\theta  \left( \ibtt (1+|Dv|)^\frac{np}{n-2\mu} \dd x \right)^\frac{n-2\mu}{n} \notag\\
 &+ A + \dfrac{B_1}{(t-s)^2}+ \dfrac{B_2}{(t-s)^{2\tilde{p}}}  + \dfrac{B_3}{t-s}+ \dfrac{B_4}{(t-s)^{p^*}} ,\label{gradest}
\end{align}
for all $\mu \in (\frac{n(q-p)}{p}, \alpha )$.\\
Now, applaying the iteration Lemma \ref{lm2} twice, we obtain
\begin{gather}\label{apriorigrad}
\biggl( \displaystyle\int_{B_{r/4}}  |Dv|^{\frac{np}{n- 2 \mu }} \dd x \biggr)^{\frac{n- 2 \mu}{n}} \leq  C |h|^{2\alpha} \biggl\{ \dfrac{1}{r^{2 \tilde{p}}} \biggl(\displaystyle\int_{B_r}  (1+|Dv|^p+|D\psi|^{2q-p})\dd x \biggr)^\kappa+ [ D \psi ]^{2q-p}_{B^{\gamma}_{2q-p, \infty}(B_r)} \biggl\},
\end{gather}
thus, using Lemma \ref{besovembed}, from inequalities \eqref{apriorigrad} and \eqref{tauVp}, we deduce the a priori estimate
\begin{align}
&  \displaystyle\int_{B_{r/4}}(|\tau_{h}V_p(Dv)|^{2}+a(x+h)|\tau_{h}V_q(Dv)|^{2})\dd x \notag\\
 \leq & C |h|^{2\alpha} \biggl\{ \dfrac{1}{r^{2 \tilde{p}}} \biggl(\displaystyle\int_{B_r}  (1+|Dv|^p+|D\psi|^{2q-p})\dd x \biggr)^\kappa+ [ D \psi ]^{2q-p}_{B^{\gamma}_{2q-p, \infty}(B_r)} \biggl\}, \label{besovv}
\end{align}
for some $\mu < \alpha$, where $C := C(n,p,q, \mu, \Vert a \Vert_{\infty})$ and $\kappa:=  \frac{\delta(2q-p)\tilde{p}}{p} < \tilde{p}$.
\endproof

According to the previous result, we state the following remarks, which will be crucial for the proof of Theorem \ref{thmbes}. 
\begin{rmk}
From Proposition \ref{rappincr} $(iii)$,  it follows that
\begin{align}
|\tau_h(\sqrt{a(x)}V_q(Dv))|^2 
\leq  C a(x+h)|\tau_hV_q(Dv)|^2+C|V_q(Dv)|^2|\tau_ha(x)|. \label{tauH}
\end{align}
Combining \eqref{besovv} and \eqref{tauH}, we obtain
\begin{align}
& \displaystyle\int_{B_{r/4}} |\tau_h(\sqrt{a(x)}V_q(Dv))|^2 \dd x \notag\\
\leq &  \displaystyle\int_{B_{r/4}}(|\tau_hV_p(Dv)|^2+|\tau_h(\sqrt{a(x)}V_q(Dv))|^2\dd x \notag\\
\leq & C \displaystyle\int_{B_{r/4}}(|\tau_hV_p(Dv)|^2+a(x+h)|\tau_hV_q(Dv)|^2+|V_q(Dv)|^2|\tau_ha(x)|)\dd x \notag\\
 \leq & C |h|^{2\alpha} \biggl\{ \dfrac{1}{r^{2 \tilde{p}}} \biggl(\displaystyle\int_{B_r}  (1+|Dv|^p+|D\psi|^{2q-p})\dd x \biggr)^\kappa+ [ D \psi ]^{2q-p}_{B^{\gamma}_{2q-p, \infty}(B_r)} \biggl\}+C|h|^\alpha \Vert Dv \Vert^q_{L^{q}(B_r)}\notag\\
\leq & C |h|^{\alpha} \biggl\{ \dfrac{1}{r^{2 \tilde{p}}} \biggl(\displaystyle\int_{B_r}  (1+|Dv|^p+|D\psi|^{2q-p})\dd x \biggr)^\kappa+ \Vert Dv \Vert^q_{L^{q}(B_r)}+[ D \psi ]^{2q-p}_{B^{\gamma}_{2q-p, \infty}(B_r)} \biggl\}, \label{gradvq}
\end{align}
which is finite by Theorem \ref{highcomp}. Therefore,
$$\sqrt{a(x)}V_q(Dv) \in B^{\frac{\alpha}{2}}_{2,\infty,\text{loc}}(B).$$
Lemma \eqref{besovembed} yields
$$a(x)|Dv|^q \in L^{\frac{n}{n- 2 \beta}}_\text{loc}(B), \quad \forall \beta < \dfrac{\alpha}{2}.$$
\end{rmk}

\begin{rmk}\label{remark}
Choosing $\mu < \alpha$ s.t. $q= \frac{np}{n- 2 \mu}$, estimates \eqref{apriorigrad} and \eqref{gradvq} yield
\begin{align}
&  \displaystyle\int_{B_{r/4}}(|\tau_hV_p(Dv)|^2+|\tau_h(\sqrt{a(x)}V_q(Dv))|^2\dd x \notag\\
\leq & C |h|^{\alpha} \biggl\{ \dfrac{1}{r^{2 \tilde{p}}} \biggl(\displaystyle\int_{B_r}  (1+|Dv|^p+|D\psi|^{2q-p})\dd x \biggr)^\kappa+ [ D \psi ]^{2q-p}_{B^{\gamma}_{2q-p, \infty}(B_r)} \biggl\}^{\frac{q}{p}} \notag\\
\leq & C |h|^{\alpha} \biggl\{ \dfrac{1}{r^{2\tilde{p}_1} }\biggl(\displaystyle\int_{B_r}  (1+|Dv|^p+|D\psi|^{2q-p})\dd x \biggr)^{\kappa_1}+ [ D \psi ]^{q_1}_{B^{\gamma}_{2q-p, \infty}(B_r)}+1 \biggl\},\label{comparisonbesov}
\end{align}
where $\frac{ \tilde{p}q}{p}=\tilde{p}_1>1$, $\frac{\kappa q}{p}=\kappa_1< \tilde{p}_1 $ and $\frac{(2q-p) q}{p}=q_1<\tilde{p}_1 $, with $\tilde{p}$ and $\kappa$ introduced in \eqref{exp} and \eqref{besovv} respectively.
\end{rmk}

\section{Comparison}\label{seccomp}
In this section we prove a comparison Lemma (see Lemma \ref{comparison} below), where we estimate the distance between the solution $u$ to the problem \eqref{obpro} and the solution $v$ to the problem \eqref{frozen}. In order to do so, we first need the following lemma.
\begin{lem}
Let $\tilde{F}: \Omega \times \mathbb{R}^n \rightarrow \mathbb{R}$ be the function defined in \eqref{froint} under Assumption 1. Then there exists a positive constant $c=c(r,n,\nu)$ such that the following inequality holds for every $x \in \Omega $ and every $z_1,z_2 \in \mathbb{R}^n$
\begin{align}
c(|V_p(z_1)-V_p(z_2)|^2+ a(x)&|V_q(z_1)-V_q(z_2)|^2 ) \notag\\
& \leq \tilde{F}(x,z_1)-\tilde{F}(x,z_2)- \langle D_{\xi}\tilde{F}(x,z_2)  ,z_1 -z_2 \rangle. \label{stimacomp}
\end{align}
\end{lem}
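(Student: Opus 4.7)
The plan is to reduce the inequality to two standard convexity estimates for powers of the Euclidean norm, exploiting that $\tilde F(x,\cdot)$ is (up to a uniformly positive multiplicative constant) the sum of two convex pieces.

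First, I would observe that, since $x_0$ and $u_B$ are fixed, $b(x_0,u_B)$ is just a positive constant, and by Assumption 1 (ii) it satisfies $\nu \leq b(x_0,u_B) \leq L$. Hence, linearity of the Bregman-type remainder gives
\begin{align*}
\tilde F(x,z_1)-\tilde F(x,z_2)-\langle D_\xi \tilde F(x,z_2),z_1-z_2\rangle
= b(x_0,u_B)\bigl[H(x,z_1)-H(x,z_2)-\langle D_\xi H(x,z_2),z_1-z_2\rangle\bigr],
\end{align*}
so it suffices to prove the claim with $H$ in place of $\tilde F$ and then multiply by $\nu$ at the end. Using $D_\xi H(x,z) = p|z|^{p-2}z + qa(x)|z|^{q-2}z$, the $H$-remainder further splits as
\begin{align*}
\bigl[|z_1|^p-|z_2|^p-p\langle |z_2|^{p-2}z_2,z_1-z_2\rangle\bigr] + a(x)\bigl[|z_1|^q-|z_2|^q-q\langle |z_2|^{q-2}z_2,z_1-z_2\rangle\bigr],
\end{align*}
where the coefficient $a(x)\geq 0$ preserves the sign.

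Next, I would apply separately to each bracket the classical convexity inequality for powers of the norm with exponent $r\geq 2$, namely
\begin{align*}
|z_1|^r-|z_2|^r-r\langle |z_2|^{r-2}z_2,z_1-z_2\rangle \geq c(r,n)\,|V_r(z_1)-V_r(z_2)|^2,
\end{align*}
which is the standard algebraic fact underlying Lemma \ref{D1}. One derives it from the Taylor expansion
\begin{align*}
|z_1|^r-|z_2|^r-r\langle |z_2|^{r-2}z_2,z_1-z_2\rangle = \int_0^1 (1-t)\,\langle D^2(|\cdot|^r)(z_t)(z_1-z_2),z_1-z_2\rangle\,\dd t,
\end{align*}
with $z_t := z_2+t(z_1-z_2)$, exploiting the ellipticity bound $\langle D^2(|\cdot|^r)(\xi)\lambda,\lambda\rangle \geq c(r)|\xi|^{r-2}|\lambda|^2$ for $r\geq 2$, and finally recognizing (via Lemma \ref{D1}) that $(|z_1|^2+|z_2|^2)^{(r-2)/2}|z_1-z_2|^2$ is comparable to $|V_r(z_1)-V_r(z_2)|^2$. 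Applying this with $r=p$ to the first bracket and with $r=q$ to the second, and using that both $p$ and $q$ are $\geq 2$, yields the pointwise estimate
\begin{align*}
H(x,z_1)-H(x,z_2)-\langle D_\xi H(x,z_2),z_1-z_2\rangle \geq c_0\bigl(|V_p(z_1)-V_p(z_2)|^2 + a(x)|V_q(z_1)-V_q(z_2)|^2\bigr),
\end{align*}
for a constant $c_0 = c_0(p,q,n)$. Multiplying by $\nu \leq b(x_0,u_B)$ gives the claim with $c=\nu c_0$.

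The main obstacle is strictly the classical pointwise convexity inequality above; everything else is bookkeeping. However, that inequality is entirely standard for $r\geq 2$ and reduces via Lemma \ref{D1} to an elementary comparison, so no real difficulty arises. Note also that the restriction $p\geq 2$ (hence $q\geq 2$) is essential here, matching the standing hypothesis of the paper; in the subquadratic regime the corresponding estimate would require a different auxiliary term such as $(\mu^2+|z_1|^2+|z_2|^2)^{(p-2)/2}|z_1-z_2|^2$.
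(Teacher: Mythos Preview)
Your proposal is correct and follows essentially the same route as the paper: factor out the positive constant $b(x_0,u_B)\geq\nu$, split $H$ into its $p$- and $q$-parts, and apply to each the classical convexity inequality for $|\cdot|^r$ obtained via the second-order Taylor expansion together with Lemma~\ref{D1}. The only step the paper spells out that you leave implicit is the lower bound $\int_0^1(1-t)|z_t|^{r-2}\,\dd t\geq c(r)(|z_1|+|z_2|)^{r-2}$, which it gets by showing $|z_t|\geq\tfrac14(|z_1|+|z_2|)$ on a subinterval of $[0,1]$; this is indeed standard and your reference to Lemma~\ref{D1} covers it.
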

\proof
We start proving that for every $r \geq 2$ there exists a constant $c=c(r,n)$ such that
\begin{align}
c(r,n) |V_r(z_1)-V_r(z_2)|^2 \leq  g_r(z_1)-g_r(z_2)- \langle D_\xi g_r(z_2), z_1 -z_2 \rangle , 
\end{align}
where we denote $g_r(z):=|z|^r$.

Let us consider the function $G_r: [0,1] \rightarrow \mathbb{R}$ defined by $G_r(t):= g_r(t z_1 + (1-t) z_2)$. Since $G_r \in \mathcal{C}^2([0,1])$, by using Taylor's formula with integral remainder, we obtain
\begin{equation}
G_r(1)=G_r(0)+G'_r(0)+ \displaystyle\int_0^1 (1-s)G''_r(s) ds. \label{Taylor}
\end{equation}
Since 
\begin{align*}
G'_r(t)=& \langle D_\xi g_r(t z_1 +(1-t)z_2), z_1 -z_2 \rangle ,\\
G''_r(t) =& \langle D_{\xi \xi} g_r(t z_1 +(1-t)z_2)(z_1 -z_2), z_1 -z_2 \rangle,
\end{align*}
from \eqref{Taylor} we get
\begin{align}
g_r(z_1)-g_r(z_2)- \langle & D_\xi g_r(z_2),  z_1 -z_2 \rangle \notag\\
=& \displaystyle\int_0^1 (1-s) \langle D_{\xi \xi} g_r(s z_1 +(1-s)z_2)(z_1 -z_2), z_1 -z_2 \rangle ds \notag\\
\geq & c(r) |z_1 -z_2 |^2 \displaystyle\int_0^1 (1-s)  |s z_1 +(1-s)z_2|^{r-2} ds. \label{taylorest}
\end{align}
Now, we want to estimate from below $|s z_1 +(1-s)z_2|^{r-2} $. If $|z_1| \leq |z_2|$ and $s \in [3/4,1]$, then $-1/4 \leq s-1 \leq 0$ and
$$|s z_1 +(1-s)z_2| \geq s |z_1| +(s-1)|z_2| \geq \dfrac{3}{4} |z_1| - \dfrac{1}{4} |z_2| \geq \dfrac{1}{4} (|z_1|+|z_2|),$$
while, if $|z_2| > |z_2|$ and $s \in [0,1/4]$, then $3/4 \leq 1-s \leq 1$ and
$$|s z_1 +(1-s)z_2| \geq -s |z_1| +(1-s)|z_2| \geq -\dfrac{1}{4} |z_1| + \dfrac{3}{4} |z_2| \geq \dfrac{1}{4} (|z_1|+|z_2|).$$
Therefore
\begin{equation}
|s z_1 +(1-s)z_2|^{r-2} \geq 4^{2-r} (|z_1|+|z_2|)^{r-2} \label{eq}
\end{equation}
holds on a suitable subinterval of $[0,1]$. Eventually, inserting \eqref{eq} in \eqref{taylorest} we obtain
\begin{align*}
g_r(z_1)-g_r(z_2)- \langle  D_\xi g_r(z_2),  z_1 -z_2 \rangle \geq& c(r) (|z_1|+|z_2|)^{r-2}|z_1 -z_2 |^2 \\
\geq & c(r,n) |V_r(z_1)-V_r(z_2)|^2,
\end{align*}
where in the last inequality we used Lemma \ref{D1}.

At this point, using the bound from below on $b$ in Assumption 1 and estimate \eqref{stimacomp} we deduce
\begin{align*}
\tilde{F}(x,z_1)-\tilde{F}(x,z_2)-& \langle D_{\xi}\tilde{F}(x,z_2)  ,z_1 -z_2 \rangle \\
=& b(x_0,u_B)[g_p(z_1)-g_p(z_2)- \langle D_{\xi}g_p(z_2)  ,z_1 -z_2 \rangle \\
&+a(x)(g_q(z_1)-g_q(z_2)- \langle D_{\xi}g_q(z_2)  ,z_1 -z_2 \rangle)] \\
\geq & c(r,n, \nu) (|V_p(z_1)-V_p(z_2)|^2+ a(x)|V_q(z_1)-V_q(z_2)|^2 ). 
\end{align*}
\endproof

\begin{rmk}In the proof of Lemma \ref{comparison} we will take advantage of the higher integrability results established in Section \ref{higher_int}. Indeed, the assumption $D \psi \in B^{\gamma}_{2q-p,\infty,\text{loc}}(\Omega)$ and Lemma \ref{besovembed} imply that $D \psi \in L^{\frac{n(2q-p)}{n-\mu (2q-p)}}$, for every $0 < \mu < \gamma$. Therefore, $H(x,D \psi)$ belong to some $L^{m}$, with $m>1 $. 
\end{rmk}

\begin{lem}\label{comparison}
Let $u$ be a solution to \eqref{obpro} and $v \in u+W^{1,p}_0(B)$ be the solution to \eqref{frozen}, under Assumptions 1 and 2, for exponents $2 \leq p < q$ verifying
$$\dfrac{q}{p}< 1 + \dfrac{\alpha}{n}.$$ 
If $$D \psi \in B^{\gamma}_{2q-p,\infty,\text{loc}}(\Omega),$$ for $0< \alpha < \gamma <1$, then
\begin{align}
\displaystyle\int_B |V_p(Du)-V_p(Dv)|^2+ a(x)&|V_q(Du)-V_q(Dv)|^2 \dd x \notag\\
&\leq CR^{\sigma} \displaystyle\int_{2B}(1+(H(x,Du))^m+(H(x,D\psi))^m)\dd x, \label{comparisonest}
\end{align}
with $\sigma= \min \{ \beta, m-1\}$, where $\beta $ is the exponent appearing in the Assumption 2 and where $m$ is the minimum of the two higher integrability exponents of Theorems \ref{higherint} and \ref{higint2}.
\end{lem}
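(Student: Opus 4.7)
The plan is to start from the pointwise inequality \eqref{stimacomp} applied to $z_1 = Du(x)$ and $z_2 = Dv(x)$. Integrating over $B$ gives
\begin{align*}
c \displaystyle\int_B \Big(|V_p(Du) - V_p(Dv)|^2 + a(x) |V_q(Du) - V_q(Dv)|^2\Big) \dd x \\
\leq \displaystyle\int_B \big[\tilde{F}(x, Du) - \tilde{F}(x, Dv)\big] \dd x - \displaystyle\int_B \langle D_\xi \tilde{F}(x, Dv), D(u - v) \rangle \dd x.
\end{align*}
Since $v - u \in W^{1,p}_0(B)$, $u \geq \psi$, and $\tilde{F}(x, z) = b(x_0, u_B) H(x, z)$ with $b(x_0, u_B) > 0$, the function $u$ is admissible in the variational inequality \eqref{varin} for $v$; hence the last integral on the right is non-positive and can be discarded.

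Next, I would use the identity $\tilde{F}(x, z) - F(x, w, z) = [b(x_0, u_B) - b(x, w)] H(x, z)$ to split the remaining term as
\begin{align*}
\displaystyle\int_B \big[\tilde{F}(x, Du) - \tilde{F}(x, Dv)\big] \dd x = \displaystyle\int_B \big[F(x, u, Du) - F(x, v, Dv)\big] \dd x + \Delta_u - \Delta_v,
\end{align*}
where $\Delta_w := \int_B [b(x_0, u_B) - b(x, w)] H(x, Dw) \dd x$. The first summand on the right is non-positive: extending $v$ by $u$ outside $B$ produces an admissible competitor $\tilde v \in \mathcal{K}_{\psi}(\Omega)$ with $\tilde v = u$ on $\Omega \setminus B$, so the minimality of $u$ in \eqref{obpro} gives $\int_B F(x, u, Du) \dd x \leq \int_B F(x, v, Dv) \dd x$. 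Thus the whole argument reduces to bounding $|\Delta_u| + |\Delta_v|$.

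For this final step, Assumption 2 yields $|b(x_0, u_B) - b(x, w(x))| \leq \omega_b(|x - x_0| + |w(x) - u_B|) \leq C\big(R^\beta + |w(x) - u_B|^\beta\big)$. The $R^\beta$-part is handled by Hölder's inequality combined with the higher integrability exponent $m$ produced by Theorem \ref{higherint} (for $w = u$) and Theorem \ref{higint2} (for $w = v$). The variable part $\int_B |w - u_B|^\beta H(x, Dw) \dd x$ is treated by Hölder together with the Sobolev--Poincaré inequality; for $w = v$, one decomposes $|v - u_B| \leq |v - u| + |u - u_B|$ and uses $v - u \in W^{1,p}_0(B)$ for the first summand and Poincaré--Wirtinger for the second, each step yielding an extra factor of $R$. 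Theorem \ref{higint2} is finally invoked to replace the intermediate $\int_B H(x, Dv)^m$ quantities by $\int_{2B} H(x, Du)^m$ and $\int_{2B} H(x, D\psi)^m$. The main obstacle is the exponent bookkeeping: one must calibrate the interplay between $R^\beta$ (coming from Assumption 2) and the $R$-powers arising from the combination of Hölder and the self-improving exponent $m$, so that the minimum of the two regimes produces exactly $\sigma = \min\{\beta, m-1\}$; the ``$1$'' appearing inside $\int_{2B}(\cdots)$ on the right-hand side of \eqref{comparisonest} absorbs the lower order terms coming from replacing $\big(\int_{2B} H^m\big)^{1/m}$ by $1 + \int_{2B} H^m$.
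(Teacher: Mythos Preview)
Your approach is correct and follows essentially the same strategy as the paper. Both start from \eqref{stimacomp}, drop the gradient term via the variational inequality \eqref{varin}, use the minimality of $u$ to discard $\int_B[F(x,u,Du)-F(x,v,Dv)]\,\dd x$, and then control the oscillation of $b$ against $H$ by means of Assumption~2, Poincar\'e, and the higher integrability of Section~\ref{higher_int}. The paper carries out a six-term telescoping of $\int_B[\tilde F(x,Du)-\tilde F(x,Dv)]\,\dd x$: your $\Delta_u$ is its $I_1+I_2$ and your $-\Delta_v$ is its $I_4+I_5+I_6$, with $I_3$ being precisely your minimality term. For the $v$-piece the paper passes through $v_B$ (handling $|v-v_B|$ and $|v_B-u_B|$ separately, the latter via Poincar\'e for $u-v\in W^{1,p}_0(B)$), whereas you route $|v-u_B|$ through $|v-u|+|u-u_B|$; both decompositions work.

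The one technical point to watch in your H\"older scheme concerns the ``exponent bookkeeping'' you flag. The paper does not use H\"older here but rather Young's inequality with weights $R^{\pm\sigma/(1+\sigma)}$, which makes the two constraints $\sigma\le m-1$ (so that $H^{1+\sigma}\in L^1$) and $\sigma\le\beta$ (so that $\omega_b(s)^{(1+\sigma)/\sigma}\le s^{1+\sigma}$) appear on the spot. If instead you bound $\omega_b(s)\le s^\beta$ and then apply H\"older with exponents $(m,m')$, the factor $\bigl(\int_B|w-u_B|^{\beta m'}\bigr)^{1/m'}$ may fail to be controlled when $m$ is close to $1$ and $\beta m'=\beta m/(m-1)$ exceeds the available integrability of $w$. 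The fix is to use $\omega_b(s)\le s^{\sigma}$ with $\sigma=\min\{\beta,m-1\}$ (valid since $\omega_b\le 1$), which forces $\sigma m'\le m$ and lets Poincar\'e plus Young close the estimate exactly as you describe.
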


\proof
Assumption 1, the definition of $\tilde{F}$ and the minimality of $v$ imply
\begin{align}\label{estimate1comp}
\displaystyle\int_B H(x,Dv)\dd x \leq C \displaystyle\int_B \tilde{F}(x,Dv)\dd x \leq \displaystyle\int_B\tilde{F}(x,Du)\dd x \leq C \displaystyle\int_B H(x,Du)\dd x,
\end{align}
on the other hand, Theorem \ref{higint2} yields
\begin{align}\label{integr}
\displaystyle\int_B (H(x,Dv))^m \dd x \leq \displaystyle\int_B [(H(x,Du))^m +(H(x,D\psi))^m]\dd x,
\end{align}
for some $m>1$. From inequality \eqref{stimacomp} we get
\begin{align*}
\displaystyle\int_B |V_p(Du)-V_p(Dv)|^2+ a(x)&|V_q(Du)-V_q(Dv)|^2 \dd x \\
&\leq C \displaystyle\int_B \tilde{F}(x,Du)-\tilde{F}(x,Dv)- \langle D_{\xi}\tilde{F}(x,Dv)  ,Du-Dv \rangle \dd x ,
\end{align*}
moreover, recalling inequality \eqref{varin}, i.e.
\begin{align}
\displaystyle\int_B \langle D_{\xi}H(x,Dv)  ,Du-Dv \rangle \dd x \geq 0,
\end{align}
and that $b(x_0,u_B)\geq \nu > 0$,
we deduce 
\begin{align*}
\displaystyle\int_B |V_p(Du)-V_p(Dv)|^2+ a(x)&|V_q(Du)-V_q(Dv)|^2 \dd x\leq C \displaystyle\int_B \tilde{F}(x,Du)-\tilde{F}(x,Dv)\dd x. 
\end{align*}
Hence, we can write the previous estimate as follows
\begin{align}
\displaystyle\int_B |V_p(Du)-V_p(Dv)&|^2+ a(x)|V_q(Du)-V_q(Dv)|^2 \dd x \notag\\
\leq& C \displaystyle\int_B \tilde{F}(x,Du)-\tilde{F}(x,Dv)\dd x \notag\\
=&C \displaystyle\int_B [b(x_0,u_B)H(x,Du)-b(x_0,u_B)H(x,Dv)]\dd x \notag\\
=&C \displaystyle\int_B [b(x_0,u_B)H(x,Du)-b(x,u_B)H(x,Du)]\dd x \notag\\
&+C \displaystyle\int_B [b(x,u_B)H(x,Du)-b(x,u)H(x,Du)]\dd x \notag\\
&+C \displaystyle\int_B [b(x,u)H(x,Du)-b(x,v)H(x,Dv)]\dd x \notag\\
&+C \displaystyle\int_B [b(x,v)H(x,Dv)-b(x,v_B)H(x,Dv)]\dd x \notag\\
&+C \displaystyle\int_B [b(x,v_B)H(x,Dv)-b(x,u_B)H(x,Dv)]\dd x \notag\\
&+C \displaystyle\int_B [b(x,u_B)H(x,Dv)-b(x_0,u_B)H(x,Dv)]\dd x \notag\\
=& C[I_1+I_2+I_3+I_4+I_5+I_6].\label{estcomp}
\end{align}
We proceed estimating the various pieces arising up from \eqref{estcomp}.

By Assumption 2 and estimate \eqref{estimate1comp}, we get
\begin{align}
I_1+I_6 \leq & \displaystyle\int_B \omega_b(|x-x_0|)H(x,Du)\dd x+\displaystyle\int_B \omega_b(|x-x_0|)H(x,Dv)\dd x \notag\\
\leq & \displaystyle\int_B |x-x_0|^\beta(H(x,Du)+H(x,Dv))\dd x \notag\\
\leq & CR^{\beta} \displaystyle\int_B H(x,Du)\dd x\notag\\
\leq &  CR^{\beta} \displaystyle\int_B [1+(H(x,Du))^m]\dd x.\label{I.1}
\end{align}
Now, we take care of the integral $I_2$. From Assumption 2, Young's and Poincaré's inequalities, we infer
\begin{align}
I_2 \leq & \displaystyle\int_B \omega_b(|u-u_B|)H(x,Du)\dd x \notag\\
= & \displaystyle\int_B \dfrac{1}{R^{\frac{\sigma}{1+\sigma}}}\omega_b(|u-u_B|) R^{\frac{\sigma}{1+\sigma}}H(x,Du)\dd x \notag\\
\leq & C\displaystyle\int_B \dfrac{1}{R}\omega_b(|u-u_B|)^{\frac{1+\sigma}{\sigma}}+ R^{\sigma}(H(x,Du))^{1+\sigma}\dd x \notag\\
\leq & CR^\sigma\displaystyle\int_B \dfrac{1}{R^{1+\sigma}}|u-u_B|^{1+\sigma}+ (H(x,Du))^{1+\sigma}\dd x \notag\\
\leq & CR^\sigma\displaystyle\int_B |Du|^{1+\sigma}+ (H(x,Du))^{1+\sigma}\dd x \notag\\
\leq & CR^\sigma\displaystyle\int_B (1+|Du|^{p(1+\sigma)})+ (H(x,Du))^{1+\sigma}\dd x \notag\\
\leq & CR^\sigma\displaystyle\int_B  [1+(H(x,Du))^m]\dd x ,\label{I.2}
\end{align}
where $\sigma := \min \{ \beta,m-1\}$.

The minimality of $u$ yields that
\begin{align}
I_3 \leq 0. \label{I.3}
\end{align}
Arguing analogously as for the integral $I_2$, we obtain
\begin{align}
I_4 \leq &  \displaystyle\int_B \omega_b(|v-v_B|)H(x,Dv)\dd x \notag\\
\leq & CR^\sigma\displaystyle\int_B \dfrac{1}{R^{1+\sigma}}|v-v_B|^{1+\sigma}+ (H(x,Dv))^{1+\sigma}\dd x \notag\\
\leq & CR^\sigma\displaystyle\int_B |Dv|^{1+\sigma}+ (H(x,Dv))^{1+\sigma}\dd x \notag\\
\leq & CR^\sigma\displaystyle\int_B  [1+(H(x,Dv))^m]\dd x \notag\\
 \leq & CR^\sigma\displaystyle\int_B  [1+(H(x,Du))^m+(H(x,D\psi))^m]\dd x,\label{I.4}
\end{align}
where in the last inequality we used \eqref{integr}.

Since $u=v $ on $\partial B$, using Poicaré inequality for the function $u-v$, we infer the following estimate for $I_5$.
\begin{align}
I_5 \leq & \displaystyle\int_B \omega_b(|u_B-v_B|)H(x,Dv)\dd x \notag\\
\leq & CR^\sigma\displaystyle\int_B \dfrac{1}{R^{1+\sigma}}\omega_b(|u_B-v_B|)^{\frac{1+\sigma}{\sigma}}+ (H(x,Du))^{1+\sigma}\dd x \notag\\
\leq & CR^\sigma\displaystyle\int_B \dfrac{1}{R^{1+\sigma}}|u-v|^{1+\sigma}+ (H(x,Dv))^{1+\sigma}\dd x \notag\\
\leq & CR^\sigma\displaystyle\int_B |Du|^{1+\sigma}+ |Dv|^{1+\sigma}+(H(x,Dv))^{1+\sigma}\dd x \notag\\
\leq & CR^\sigma\displaystyle\int_B  [1+(H(x,Du))^m+(H(x,Dv))^m]\dd x \notag\\
\leq & CR^\sigma\displaystyle\int_B  [1+(H(x,Du))^m+(H(x,D\psi))^m]\dd x, \label{I.5}
\end{align}
where in the inequality we used estimate \eqref{estimate1comp}. Finally, inserting estimates \eqref{I.1}--\eqref{I.5} in \eqref{estcomp}, we get the desired estimate.
\endproof

\section{Main result}\label{mainsec}
In order to prove Theorem \ref{mainthm} we follow the strategy first proposed in \cite{kristensen.mingione}.\\
Before proving our main result, in Section \ref{subsec6.1}, we fix some further notation and derive a preliminary regularity theorem for solutions to \eqref{obpro}.\\
For a ball $\mathcal{B} \Subset \Omega$ of radius $R$, we will denote by $\mathcal{Q}_1= \mathcal{Q}_1(\mathcal{B})$ and $\mathcal{Q}_2= \mathcal{Q}_2(\mathcal{B})$ the largest and the smallest cubes, concentric to $\mathcal{B}$ and with sides parallel to the coordinate axes, contained in $\mathcal{B}$ and containing $\mathcal{B}$ respectively. It is easy to verify that $|\mathcal{B}|\approx |\mathcal{Q}_1|\approx |\mathcal{Q}_2|\approx R^n$. We also denote the enlarged ball by $\hat{\mathcal{B}}=4\mathcal{B}$. We set
$$ \mathcal{Q}_1= \mathcal{Q}_1(\mathcal{B}) \ \ \ \hat{\mathcal{Q}}_2=\mathcal{Q}_2(\hat{\mathcal{B}})$$
so that we have the following chain of inclusions
$$ \mathcal{Q}_1 \subset \mathcal{B} \Subset 2 \mathcal{B} \Subset \mathcal{Q}_1(\hat{\mathcal{B}}) \subset {\hat{\mathcal{B}}} \subset \hat{\mathcal{Q}}_2  .$$
In what follows, we shall always take $\mathcal{B}$ such that $\mathcal{Q}_2(\hat{\mathcal{B}}) \Subset \Omega $.

Our next result shows that a fractional differentiability property on the gradient of the obstacle transfers to a higher fractional differentiability for the gradient of the minimizer.
\begin{thm}\label{thmbes}
Let $u$ be a solution to \eqref{obpro} under Assumptions 1 and 2, for exponents $2 \leq p < \frac{n}{\alpha}$, $p<q$ verifying
$$\dfrac{q}{p}< 1 + \dfrac{\alpha}{n}.$$ Then the following implication
$$D\psi \in B^\gamma_{2q-p,\infty,\text{loc}}(\Omega) \Rightarrow V_p(Du), \ \sqrt{a(x)}V_q(Du) \in B^{\sigma_\alpha}_{2, \infty, \text{loc}}(\Omega) $$
holds provided $0 < \alpha < \gamma <1$, with $\sigma_\alpha = \sigma_\alpha(p,q,n,\alpha,\beta, m)$, where $\beta $ is the exponent appearing in the Assumption 2 and where $m$ is the minimum of the two higher integrability exponents of Theorems \ref{higherint} and \ref{higint2}.
\end{thm}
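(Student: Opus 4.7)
The strategy follows the freezing argument from \cite{kristensen.mingione} and combines the comparison Lemma \ref{comparison} with the a priori difference-quotient estimate in Lemma \ref{highcomp} (together with its subsequent Remark \ref{remark}) for the frozen minimizer $v$. For a fixed compact subset $K\Subset\Omega$ and $|h|$ small, I would choose an auxiliary radius $R=R(|h|)$, to be optimized later, and work on a covering of $K$ by balls $\mathcal{B}_i$ of radius $R$ with bounded overlap, small enough that the enlarged cubes $\mathcal{Q}_2(\hat{\mathcal{B}}_i)$ still sit inside $\Omega$. On each $\hat{\mathcal{B}}_i$ I would let $v_i$ denote the associated solution to \eqref{frozen} with boundary datum $u$.

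The core ingredient is the pointwise triangle decomposition
\begin{equation*}
\tau_h V_p(Du)(x)=[V_p(Du(x+h))-V_p(Dv_i(x+h))]+\tau_h V_p(Dv_i)(x)+[V_p(Dv_i(x))-V_p(Du(x))],
\end{equation*}
and the analogous decomposition for $\sqrt{a(x+h)}\,V_q(Du)(x)$, where the mismatch between $\sqrt{a(x+h)}$ and $\sqrt{a(x)}$ on the boundary pieces is controlled exactly as in Remark \ref{remark} via $|\tau_h a|\le |h|^\alpha$ together with the higher integrability of $|Du|^q$. Squaring, integrating over $\mathcal{Q}_1^i$, and changing variables $y=x+h$ on the shifted pieces (legitimate because $\mathcal{Q}_1^i+h\subset\hat{\mathcal{B}}_i$ as soon as $|h|\ll R$), I reduce the bound to two types of contributions: comparison integrals $\int_{\hat{\mathcal{B}}_i}(|V_p(Du)-V_p(Dv_i)|^2+a|V_q(Du)-V_q(Dv_i)|^2)\,dx$, and pure difference-quotient integrals of $V_p(Dv_i)$ and $\sqrt{a}\,V_q(Dv_i)$.

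I would then invoke Lemma \ref{comparison} to estimate the comparison integrals by $CR^\sigma\int_{\hat{\mathcal{B}}_i}(1+H(x,Du)^m+H(x,D\psi)^m)\,dx$ with $\sigma=\min\{\beta,m-1\}$, and estimate \eqref{comparisonbesov} in Remark \ref{remark} to bound the difference-quotient integrals by
\begin{equation*}
C|h|^\alpha R^{-2\tilde{p}_1}\Bigl(\int_{\hat{\mathcal{B}}_i}(1+|Dv_i|^p+|D\psi|^{2q-p})\,dx\Bigr)^{\kappa_1}+C|h|^\alpha\bigl(1+[D\psi]^{q_1}_{B^\gamma_{2q-p,\infty}(\hat{\mathcal{B}}_i)}\bigr).
\end{equation*}
Using Theorems \ref{higherint} and \ref{higint2} to push $\int|Dv_i|^p$ back to integrals of $H(x,Du)^m$ and $H(x,D\psi)^m$ on $\hat{\mathcal{B}}_i$, summing over the finite-overlap covering of $K$ and pulling the $\kappa_1$-th power outside of the sum via subadditivity (using $\kappa_1<\tilde{p}_1$ and Hölder), I would arrive at
\begin{equation*}
\int_K\bigl(|\tau_h V_p(Du)|^2+|\tau_h(\sqrt{a}\,V_q(Du))|^2\bigr)dx\le C R^\sigma\mathcal{A}+C|h|^\alpha R^{-2\tilde{p}_1}\mathcal{B}+C|h|^\alpha\mathcal{D},
\end{equation*}
with $\mathcal{A},\mathcal{B},\mathcal{D}$ finite under the hypotheses (here the standing assumption $D\psi\in B^\gamma_{2q-p,\infty,\text{loc}}$ supplies $H(x,D\psi)\in L^m_{\text{loc}}$ via Lemma \ref{besovembed}).

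Finally, I would choose $R=|h|^\theta$ with $\theta=\alpha/(\sigma+2\tilde{p}_1)$, which balances the first two terms and produces the common exponent $|h|^{\sigma\alpha/(\sigma+2\tilde{p}_1)}$; setting $2\sigma_\alpha:=\min\bigl\{\sigma\alpha/(\sigma+2\tilde{p}_1),\alpha\bigr\}$ and taking the supremum in $h$ yields the desired Besov seminorm bound on $K$, so that $V_p(Du),\sqrt{a(\cdot)}\,V_q(Du)\in B^{\sigma_\alpha}_{2,\infty,\text{loc}}(\Omega)$. The main technical obstacle I anticipate is the bookkeeping of the many exponents involved ($\tilde{p}_1,\kappa_1,q_1$ from \eqref{comparisonbesov} and $m$ from the higher-integrability theorems), together with checking that the right-hand sides in both building-block estimates are controlled uniformly in $|h|$ by quantities finite under the standing gap condition $q/p<1+\alpha/n$ and the Besov assumption on $D\psi$; in particular one must verify that the sublinear summation over the covering is compatible with the $\kappa_1<\tilde{p}_1$ exponent, so that the $R^\sigma$ versus $R^{-2\tilde{p}_1}$ balancing is legitimate at the level of the global estimate on $K$.
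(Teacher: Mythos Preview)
Your proposal is correct and follows essentially the same route as the paper: the triangle decomposition into comparison pieces and frozen difference-quotient pieces, the application of Lemma \ref{comparison} and of estimate \eqref{comparisonbesov} from Remark \ref{remark}, the balancing choice $R=|h|^{\alpha/(\sigma+2\tilde{p}_1)}$, and the bounded-overlap covering argument are exactly the ingredients the paper uses. Two minor remarks: the paper controls $\int_{\hat{\mathcal{B}}}|Dv|^p$ directly via the elementary minimality bound \eqref{estimate1comp} rather than the full higher-integrability theorems, and the $\min$ with $\alpha$ in your definition of $\sigma_\alpha$ is redundant since $\sigma\alpha/(\sigma+2\tilde{p}_1)<\alpha$ always.
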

\proof 
Let us fix arbitrary open subsets $\Omega'  \Subset \Omega''  \Subset \Omega$ and choose $x_0 \in \Omega' $. We recall the definition of $\tilde{p}_1$ from \eqref{comparisonbesov}. Let $\delta \in \left(0,\frac{\alpha}{2\tilde{p}_1}\right)$ be chosen later and consider the ball $\mathcal{B}=\mathcal{B}(x_0,|h|^{\delta})$ with $|h|$ sufficiently small, depending on the dimension $n$, the parameter $\delta$ and the distance between $\Omega'$ and the boundary of $\Omega'' $ such that $\hat{\mathcal{Q}}_2 \Subset \Omega''$. 
Furthermore, let $v \in u+W^{1,p}_0(B)$ be the solution to \eqref{frozen} with $B=\hat{\mathcal{B}}$.

We estimate the difference quotient for $V_p(Du)$ and $\sqrt{a(x)}V_q(Du)$ as follows
\begin{align}
&\displaystyle\int_{\mathcal{B}} |\tau_hV_p(Du)|^2+|\tau_h(\sqrt{a(x)}V_q(Du))|^2 \dd x\notag\\
= &  \displaystyle\int_{\mathcal{B}} |V_p(Du(x+h))-V_p(Du(x))|^2 \dd x \notag\\
&+ \displaystyle\int_{\mathcal{B}} |\sqrt{a(x+h)}V_q(Du(x+h))-\sqrt{a(x)}V_q(Du(x))|^2 \dd x \notag\\
\leq & C \displaystyle\int_{\mathcal{B}} |V_p(Du(x+h))-V_p(Dv(x+h))|^2 \dd x \notag\\
&+ C\displaystyle\int_{\mathcal{B}} |V_p(Dv(x+h))-V_p(Dv(x))|^2 \dd x \notag\\
&+ C\displaystyle\int_{\mathcal{B}} |V_p(Dv(x))-V_p(Du(x))|^2 \dd x\notag\\
&+ C \displaystyle\int_{\mathcal{B}} |\sqrt{a(x+h)}V_q(Du(x+h))-\sqrt{a(x+h)}V_q(Dv(x+h))|^2 \dd x \notag\\
&+ C\displaystyle\int_{\mathcal{B}} |\sqrt{a(x+h)}V_q(Dv(x+h))-\sqrt{a(x)}V_q(Dv(x))|^2 \dd x \notag\\
&+ C\displaystyle\int_{\mathcal{B}} |\sqrt{a(x)}V_q(Dv(x))-\sqrt{a(x)}V_q(Du(x))|^2 \dd x. \label{1.0.1}
\end{align}

Notice that if $x \in \mathcal{B} $, then $x+h \in \hat{\mathcal{B}}$, for $|h| \leq 1$. Thus, we get
\begin{align}
& \displaystyle\int_{\mathcal{B}} |V_p(Du(x+h))-V_p(Dv(x+h))|^2 \dd x  \notag\\
&+ \displaystyle\int_{\mathcal{B}} |\sqrt{a(x+h)}V_q(Du(x+h))-\sqrt{a(x+h)}V_q(Dv(x+h))|^2 \dd x \notag\\
\leq & \displaystyle\int_{\hat{\mathcal{B}}} |V_p(Du)-V_p(Dv)|^2 +a(x)|V_q(Du)-V_q(Dv)|^2\dd x. \label{1.0.2}
\end{align}
Inserting inequality \eqref{1.0.2} in \eqref{1.0.1}, we obtain
\begin{align}
&\displaystyle\int_{\mathcal{B}} |\tau_hV_p(Du)|^2+|\tau_h(\sqrt{a(x)}V_q(Du))|^2 \dd x\notag\\
\leq & C \displaystyle\int_{\hat{\mathcal{B}}} |V_p(Du)-V_p(Dv)|^2 +a(x)|V_q(Du)-V_q(Dv)|^2\dd x \notag\\
&+ C\displaystyle\int_{\mathcal{B}} |\tau_hV_p(Dv)|^2 +  |\tau_h(\sqrt{a(x)}V_q(Dv))|^2 \dd x \notag\\
=:& J_1+J_2. \label{1.0}
\end{align}

From estimate \eqref{comparisonest} applied over the ball $\hat{\mathcal{B}}$, we infer
\begin{align}
J_1\leq C|h|^{\sigma \delta} \displaystyle\int_{\hat{\mathcal{Q}}_2}(1+(H(x,Du))^m+(H(x,D\psi))^m)\dd x, \label{1.1}
\end{align}
where we used that the radius of $\hat{\mathcal{B}}$ is proportional to $|h|^\delta$. Now estimate \eqref{comparisonbesov} (see Remark \ref{remark}) applied over the ball $\mathcal{B}$ yields

\begin{gather}\label{1.2}
J_2 \leq C |h|^{\alpha-2 \delta \tilde{p}_1} \biggl(  \displaystyle\int_{\hat{\mathcal{B}}}  (1+|Dv|^p+|D\psi|^{2q-p})\dd x\biggr)^{\kappa_1}+  C |h|^{\alpha} [ D \psi ]^{q_1}_{B^{\gamma}_{2q-p, \infty}(\hat{\mathcal{B}})}+ C|h|^\alpha ,
\end{gather}
recalling that the radius of $\mathcal{B}$ is $|h|^\delta$. Inserting \eqref{1.1} and \eqref{1.2} in \eqref{1.0}, we get

\begin{align}
&\displaystyle\int_{\mathcal{B}} |\tau_hV_p(Du)|^2+|\tau_h(\sqrt{a(x)}V_q(Du))|^2 \dd x\notag\\
 \leq & C|h|^{\sigma \delta} \displaystyle\int_{\hat{\mathcal{Q}}_2}(1+(H(x,Du))^m+(H(x,D\psi))^m)\dd x \notag\\
&+ C |h|^{\alpha-2 \delta \tilde{p}_1} \biggl(  \displaystyle\int_{\hat{\mathcal{B}}}  (1+|Dv|^p+|D\psi|^{2q-p})\dd x\biggr)^{\kappa_1}+  C |h|^{\alpha} [ D \psi ]^{q_1}_{B^{\gamma}_{2q-p, \infty}(\hat{\mathcal{B}})}+ C|h|^\alpha \notag\\
\leq &  C|h|^{\sigma \delta} \displaystyle\int_{\hat{\mathcal{Q}}_2}(1+(H(x,Du))^m+(H(x,D\psi))^m)\dd x \notag\\
&+ C |h|^{\alpha-2 \delta \tilde{p}_1} \biggl(  \displaystyle\int_{\hat{\mathcal{Q}}_2}  (1+H(x,Du))\dd x\biggr)^{\kappa_1}+  C |h|^{\alpha} [D \psi ]^{q_1}_{B^{\gamma}_{2q-p, \infty}(\hat{\mathcal{Q}}_2)} \notag\\
&+ C |h|^{\alpha-2 \delta \tilde{p}_1} \biggl(  \displaystyle\int_{\hat{\mathcal{Q}}_2} |D\psi|^{2q-p}\dd x\biggr)^{\kappa_1}+C|h|^\alpha, \label{1.3}
\end{align}
where in the last inequality we used \eqref{estimate1comp}. 

Now we choose $\delta$ in order to minimize the right hand side of the previous estimate. It is easy to check that the best possible estimate is given by the choice 

$$\delta =\dfrac{ \alpha}{\sigma +2 \tilde{p}_1} \in \biggl(0, \dfrac{\alpha}{2\tilde{p}_1} \biggr).$$

With such a choice of $\delta$ estimate \eqref{1.3} becomes

\begin{align}
&\displaystyle\int_{\mathcal{B}} |\tau_hV_p(Du)|^2+|\tau_h(\sqrt{a(x)}V_q(Du))|^2 \dd x\notag\\
\leq &  C|h|^{\frac{\alpha \sigma}{\sigma + 2 \tilde{p}_1}} \biggl\{ \displaystyle\int_{\hat{\mathcal{Q}}_2}(1+(H(x,Du))^m+(H(x,D\psi))^m)\dd x + \Vert D \psi \Vert_{B^{\gamma}_{2q-p, \infty}(\hat{\mathcal{Q}}_2)} +1 \biggr\}^{\kappa_2}, \label{1.4}
\end{align}
where $\kappa_2 := \kappa_2(n,p,q,\mu)$, for some $\mu < \alpha$.

At this point, arguing as in \cite[Lemma 4.5]{kristensen.mingione}, a covering argument allows us to replace the cubes $\mathcal{Q}_1$ and $\hat{\mathcal{Q}}_2$ with the fixed open subsets $\Omega'$ and $\Omega''$, respectively. Indeed for each $|h|\in\mathbb{R}^n$ sufficiently small we can find balls $\mathcal{B}_1 = \mathcal{B}_1(x_1,|h|^{\sigma}),...,\mathcal{B}_K = \mathcal{B}_K(x_K,|h|^{\sigma})$, being $K=K(h)\in \mathbb{N}$, such that the corresponding inner cubes $\mathcal{Q}_1(\mathcal{B}_1),...,\mathcal{Q}_1(\mathcal{B}_K)$ are disjoint and satisfy
$$ \left| \Omega' \setminus \displaystyle\bigcup_{k=1}^{K}\mathcal{Q}_1(\mathcal{B}_k) \right| =0 . $$
By our assumption we have that $\mathcal{Q}_2(\hat{\mathcal{B}}_k) \subset \Omega''$, for every $k \leq K$ and each of the dilated outer cubes $\mathcal{Q}_2(\hat{\mathcal{B}}_k)$ intersects at most ($16 \sqrt{n}$) of the other cubes $\mathcal{Q}_2(\hat{\mathcal{B}}_j)$, with $j \neq k$. Hence, after summing up \eqref{1.4} over the inner cubes $\mathcal{Q}_1 \in \{ \mathcal{Q}_1(\mathcal{B}_1),...,\mathcal{Q}_1(\mathcal{B}_K) \}$, and enlarging the constant by a fixed factor only depending on $n$ and $p$ (in particular independent of $h$), we arrive at

\begin{align}
&\displaystyle\int_{\Omega'} |\tau_hV_p(Du)|^2+|\tau_h(\sqrt{a(x)}V_q(Du))|^2 \dd x\notag\\
\leq &  C|h|^{\frac{\alpha \sigma}{\sigma + 2 \tilde{p}_1}} \biggl\{ \displaystyle\int_{\Omega''}(1+(H(x,Du))^m+(H(x,D\psi))^m)\dd x + \Vert D \psi \Vert_{B^{\gamma}_{2q-p, \infty}(\Omega'')} +1 \biggr\}^{\kappa_2}. \label{1.5}
\end{align}

Since the right hand side of the previous estimate is finite by our assumptions, it follows by arbitrariness of $\Omega'$ that 

$$V_p(Du), \ \sqrt{a(x)}V_q(Du) \in B^{\frac{\alpha \sigma}{2(\sigma + 2 \tilde{p}_1)}}_{2, \infty}(\Omega) \quad \text{locally}.$$
Setting 
\begin{equation}\label{sa}
\sigma_\alpha := \frac{\alpha \sigma}{2(\sigma + 2 \tilde{p}_1)},
\end{equation} 
it follows the conclusion.
\endproof

\subsection{Proof of Theorem \ref{mainthm}}\label{subsec6.1}
\noindent We are now able to give the proof of the main result of this work. \smallskip\\

\noindent Let us consider the function
\begin{equation}\label{funA}
A(t)= \dfrac{\alpha \sigma}{2 [2 (\tilde{p}_1- \kappa_1 t)+ \sigma]}, \qquad  \ \forall t \in \biggl(  0, \dfrac{\sigma +2 \tilde{p}_1 - \sqrt{(\sigma +2 \tilde{p}_1)^2-4 \kappa_1  \alpha \sigma}}{ 4\kappa_1} \biggr)=:(0, \tilde{\sigma}),
\end{equation}
where $\tilde{p}_1, \kappa_1$ are defined in \eqref{comparisonbesov}, $\sigma$ is defined in Lemma \ref{comparison} and $\alpha$ is the exponent appearing in Assumption $1$.

It is easy to see that $t \mapsto A(t)$ is increasing and that
\begin{align}
t < A(t) & < \tilde{\sigma} , \label{A1} \\
A ( \tilde{\sigma}) & = \tilde{\sigma}. \label{A2}
\end{align}
It is worth noticing that
\begin{equation}\label{sigma}
\sigma_\alpha < \tilde{\sigma}< \dfrac{\alpha \sigma}{2\tilde{p}_1},
\end{equation}
where $\sigma_\alpha$ was introduced in \eqref{sa}.
Indeed, owing to \eqref{sa}, the first part of inequality \eqref{sigma} holds if, and only if,
$$ (\sigma+2 \tilde{p}_1)\sqrt{(\sigma +2 \tilde{p}_1)^2-4 \kappa_1  \alpha \sigma} <  (\sigma+2 \tilde{p}_1)^2 - 2 \kappa_1 \alpha \sigma. $$
The last inequality is satisfied if, and only if,
$$  (\sigma+2 \tilde{p}_1)^4 -4 \alpha \kappa_1 \sigma (\sigma+2 \tilde{p}_1)^2 < (\sigma+2 \tilde{p}_1)^4+4 \kappa_1^2\alpha^2\sigma^2 -4 \kappa_1 \alpha \sigma (\sigma+2 \tilde{p}_1)^2,$$
that is equivalent to
$$0 < 4 \kappa_1^2\alpha^2\sigma^2.$$
On the other hand, the second part of inequality \eqref{sigma} is valid if, and only if,
$$ \tilde{p}_1 (\sigma+2 \tilde{p}_1) -2 \kappa_1 \alpha \sigma < \tilde{p}_1 \sqrt{(\sigma +2 \tilde{p}_1)^2-4 \kappa_1  \alpha \sigma}, $$
or, equivalently,
$$\tilde{p}_1^2 (\sigma+2 \tilde{p}_1)^2+4\kappa_1^2\alpha^2 \sigma^2-4 \tilde{p}_1 \kappa_1  \alpha \sigma (\sigma+2 \tilde{p}_1) < \tilde{p}_1^2 (\sigma+2 \tilde{p}_1)^2-4 \tilde{p}_1^2 \kappa_1 \alpha \sigma.$$
The previous inequality can be written as
$$\kappa_1 \alpha \sigma - \tilde{p}_1 \sigma < \tilde{p}_1^2,$$
that holds true since $1 < \kappa_1 < \tilde{p}_1$ and $\alpha , \sigma \in (0,1)$.

Let us now fix
$$\theta_0 \in \biggl( 0,  \dfrac{\alpha \sigma}{2(\sigma +2 \tilde{p}_1)} \biggr) $$
and denote
$$\theta_j=A(\theta_{j-1}), \quad \forall j \in \mathbb{N}, \ j \geq 1.$$
Hence, the sequence $(\theta_j)_j$ is increasing and 
\begin{equation}
\displaystyle\lim_j \theta_j = \tilde{\sigma}. \label{theta}
\end{equation}
Now we define the sequence $( \iota_j )_j$ inductively as follows:
\begin{align*}
\iota_0 = & \dfrac{\theta_0}{2}+  \dfrac{\alpha \sigma}{4(\sigma +2 \tilde{p}_1)} < \dfrac{\alpha \sigma}{2(\sigma +2 \tilde{p}_1)} , \\
\iota_j= & \dfrac{\theta_j+A(\iota_{j-1})}{2} . 
\end{align*}
Using the fact that $A$ is increasing and \eqref{A1}, \eqref{A2}, we obtain
\begin{equation}
\theta_j < \iota_j < \tilde{\sigma}, \qquad \ \forall j \in \mathbb{N},\label{thetagamma}
\end{equation}
and therefore, from \eqref{theta}, it follows that
\begin{equation}
\displaystyle\lim_j \iota_j = \tilde{\sigma}. \label{iota}
\end{equation}
Arguing by induction, we shall prove that
$$V_p(Du), \ \sqrt{a(x)}V_q(Du) \in B^{\iota_j}_{2, \infty,\text{loc}}(\Omega) \ \quad \forall j \in \mathbb{N}.$$
The case $j=0$ follows from Theorem \ref{thmbes} and our choice of $\iota_0$. Now, let us prove the implication
\begin{equation}
V_p(Du), \ \sqrt{a(x)}V_q(Du) \in B^{\iota_{j-1}}_{2, \infty,\text{loc}}(\Omega)  \Rightarrow V_p(Du), \ \sqrt{a(x)}V_q(Du) \in B^{\iota_j}_{2, \infty,\text{loc}}(\Omega) . \label{ind}
\end{equation}
By virtue of Lemma \ref{besovembed}, the assumptions $V_p(Du) ,  \sqrt{a(x)}V_q(Du)\in B^{\iota_{j-1}}_{2, \infty, \text{loc}}(\Omega)$ imply $$V_p(Du), \ \sqrt{a(x)}V_q(Du) \in L^{\frac{2n}{n-2 \lambda}}(\hat{\mathcal{Q}}_2),$$ for every $0 < \lambda < \iota_{j-1}$ and so, recalling equality \eqref{Vp}, we have that 
$$|Du|^p, \ a(x)|Du|^q \in L^{\frac{n}{n-2 \lambda}}(\hat{\mathcal{Q}}_2).$$
In particular, it follows
$$H(x,Du) \in L^{\frac{n}{n-2 \lambda}}(\hat{\mathcal{Q}}_2),$$
for every $0 < \lambda < \iota_{j-1}$.
Moreover,  the assumption $D \psi \in B^{\gamma}_{2q-p,\infty,\text{loc}}(\Omega)$ and Lemma \ref{besovembed} imply that $D \psi \in L^{\frac{n(2q-p)}{n-\pi (2q-p)}}(\hat{\mathcal{Q}}_2)$, for every $0 < \pi < \gamma$.  Therefore, using H\"{o}lder's inequality in estimate \eqref{1.3} we infer
\begin{align}
&\displaystyle\int_{\mathcal{B}} |\tau_hV_p(Du)|^2+|\tau_h(\sqrt{a(x)}V_q(Du))|^2 \dd x\notag\\
\leq &  C|h|^{\sigma \delta} \displaystyle\int_{\hat{\mathcal{Q}}_2}(1+(H(x,Du))^m+(H(x,D\psi))^m)\dd x \notag\\
&+ C |h|^{\alpha-2 \delta \tilde{p}_1+2 \delta \kappa_1 \lambda} \biggl(  \displaystyle\int_{\hat{\mathcal{Q}}_2}  (1+H(x,Du))^{\frac{n}{n-2 \lambda}}\dd x\biggr)^{\frac{(n-2 \lambda)\kappa_1}{n}}+  C |h|^{\alpha} [D \psi ]^{q_1}_{B^{\gamma}_{2q-p, \infty}(\hat{\mathcal{Q}}_2)} \notag\\
&+ C |h|^{\alpha-2 \delta \tilde{p}_1+(2q-p)\delta\kappa_1 \pi} \biggl(  \displaystyle\int_{\hat{\mathcal{Q}}_2} |D\psi|^{\frac{n(2q-p)}{n-\pi(2q-p)}}\dd x\biggr)^{\frac{(n- \pi (2q-p))\kappa_1}{n}}+C|h|^\alpha \notag\\
\leq &  C|h|^{\sigma \delta} \displaystyle\int_{\hat{\mathcal{Q}}_2}(1+(H(x,Du))^m+(H(x,D\psi))^m)\dd x \notag\\
&+ C |h|^{\alpha-2 \delta \tilde{p}_1+2 \delta \kappa_1 \lambda} \biggl(  \displaystyle\int_{\hat{\mathcal{Q}}_2}  (1+H(x,Du))^{\frac{n}{n-2 \lambda}}\dd x\biggr)^{\frac{(n-2 \lambda)\kappa_1}{n}}+  C |h|^{\alpha} [D \psi ]^{q_1}_{B^{\gamma}_{2q-p, \infty}(\hat{\mathcal{Q}}_2)} \notag\\
&+ C |h|^{\alpha-2 \delta \tilde{p}_1+2 \delta \kappa_1 \lambda} \biggl(  \displaystyle\int_{\hat{\mathcal{Q}}_2} |D\psi|^{\frac{n(2q-p)}{n-\pi(2q-p)}}\dd x\biggr)^{\frac{(n- \pi (2q-p))\kappa_1}{n}}+C|h|^\alpha, \label{diffquo}
\end{align}
for some $\pi \geq \frac{2 \lambda}{2q-p}$, where we used the fact that the radius of the cube $\hat{\mathcal{Q}}_2$ is proportional to $|h|^\delta$.
Therefore, choosing $\delta$ in order to maximize the right hand side of \eqref{diffquo}, namely
$$\delta = \dfrac{\alpha}{\sigma + 2 (\tilde{p}_1-k_1 \lambda)},$$
we have
\begin{align}
&\displaystyle\int_{\mathcal{B}} |\tau_hV_p(Du)|^2+|\tau_h(\sqrt{a(x)}V_q(Du))|^2 \dd x\notag\\
\leq &  C|h|^{\frac{\alpha \sigma}{\sigma + 2 (\tilde{p}_1-k_1 \lambda)}} \biggl\{ \displaystyle\int_{\hat{\mathcal{Q}}_2}(1+(H(x,Du))^m+(H(x,D\psi))^m)\dd x \notag\\
&+   \displaystyle\int_{\hat{\mathcal{Q}}_2}  (1+H(x,Du))^{\frac{n}{n-2 \lambda}}\dd x +  \Vert D \psi \Vert_{B^{\gamma}_{2q-p, \infty}(\hat{\mathcal{Q}}_2)} +1 \biggr\}^{\kappa^*}, 
\end{align}
where $\kappa^* := \kappa^*(n,p,q,\mu,\lambda)$. Thus, again through a covering argument, we deduce that
$$ V_p(Du), \ \sqrt{a(x)}V_q(Du) \in B^{\frac{\alpha \sigma}{2[\sigma + 2 (\tilde{p}_1-k_1 \lambda)]}}_{2,\infty, \text{loc}}(\Omega)= B^{A(\lambda)}_{2,\infty,\text{loc}}(\Omega), \quad \forall \lambda < \iota_{j-1}. $$
We have just proved the following implication
\begin{equation}
V_p(Du), \ \sqrt{a(x)}V_q(Du) \in B^{\iota_{j-1}}_{2,\infty, \text{loc}}(\Omega) \Rightarrow V_p(Du), \ \sqrt{a(x)}V_q(Du) \in B^{t}_{2,\infty, \text{loc}}(\Omega), \label{impbes}
\end{equation}
for all $t < A(\iota_{j-1})$. 

Since $A$ is increasing, it follows from \eqref{thetagamma} that $\theta_j < A(\iota_{j-1})$. Moreover, the definition of $\iota_j$ implies $\iota_j < A(\iota_{j-1})$. Therefore, \eqref{ind} follows from \eqref{impbes}. Besides, from \eqref{thetagamma} and \eqref{iota}, we infer
$$  V_p(Du), \ \sqrt{a(x)}V_q(Du) \in B^{t}_{2,\infty, \text{loc}}(\Omega), \quad \forall t \in ( 0, \tilde{\sigma} ).$$

It is worth noting that the exponent $\tilde{\sigma}$ defined in \eqref{funA} is bigger than $\sigma_\alpha$. Therefore, Theorem \ref{mainthm} improves the higher fractional differentiability result established in Theorem \ref{thmbes}.

\medskip
\textbf{Acknowledgments}
The authors would like to thank Prof. Eleuteri and Prof. Passarelli di Napoli for suggesting the problem and for careful reading.
\medskip

\end{document}